\newcommand{\ff}{\mathcal{F}}
\newcommand{\hh}{\mathcal{H}}
\newcommand{\cG}{\mathcal{G}}
\newcommand{\cS}{\mathcal{S}}
\newcommand{\cR}{\mathcal{R}}
\newcommand{\cA}{\mathcal{A}}
\newcommand{\cB}{\mathcal{B}}
\newcommand{\cQ}{\mathcal{Q}}
\newcommand{\cT}{\mathcal{T}}
\newcommand{\cD}{\mathcal{D}}
\newcommand{\cC}{\mathcal{C}}
\newcommand{\cE}{\mathcal{E}}
\newcommand{\cO}{\mathcal{O}}
\newcommand{\bfS}{\mathbf{S}}
\renewcommand{\ge}{\geqslant}
\renewcommand{\le}{\leqslant}
\renewcommand{\mid}{\,:\,}
\newcommand{\diam}{\operatorname{diam}}
\newcommand{\bbC}{\mathbb{C}}
\DeclareMathOperator{\support}{supp}
\newcounter{assumptionCounterNLarge}
\renewcommand{\theassumptionCounterNLarge}{\arabic{assumptionCounterNLarge}$'$}
{}
\newtheorem{theorem}{Theorem}[section]
\newtheorem{proposition}[theorem]{Proposition}
\newtheorem{lemma}[theorem]{Lemma}
\newtheorem{corollary}[theorem]{Corollary}
\newtheorem{claim}[theorem]{Claim}
\newtheorem{example}{Example}
\newtheorem{problem}{Probem}
\title{Exact results and the structure of extremal families for the Duke--Erd\H{o}s forbidden sunflower problem}
\author{Andrey Kupavskii and Fedor Noskov}
\begin{document}

\maketitle
\begin{abstract}
    In 1977, Duke and Erd\H os asked the following general question:  What is the largest size of a family $\ff \subset \binom{[n]}{k}$ that does not contain a sunflower with $s$ petals and core of size exactly $t - 1$? This problem is closely related to the famous Erd\H os--Rado sunflower problem of determining the size $\phi(s,t)$ of the largest $t$-uniform family with no $s$-sunflower. In this paper, we answer this question {\em exactly} for $t=2$, odd $s$ and $k\ge 5$, provided $n$ is large enough. Previously, the only know exact extremal result on this problem was due to Chung and Frankl from 1987. 

    One of the important ingredients for the proof that we obtained is a stability result for the Duke--Erd\H os problem, which was previously not known, mostly due to our lack of understanding of the behaviour of $\phi(s,t)$. 

    For large $k$ and $n$ we in fact manage to reduce the Duke--Erd\H os problem to an Erd\H os--Rado-like problem which depends on $t$ and $s$ only. In particular, we get a good understanding of the structure of extremal families for the Duke--Erd\H os problem in terms of the Erd\H os--Rado problem. Previously, a much looser variant of this connection (only in terms of the sizes, rather than the structure, of respective extremal families) was established in a seminal work of Frankl and F\"uredi from 1987.   
\end{abstract}

\section{Introduction}

Let $[n]$ stand for the standard set $\{1,\ldots, n\}$ and, given a set $X$ and an integer $k$, we denote by $2^X$ ($\binom{X}{k}$) the family of all subsets of $X$ (all subsets of $X$ of size $k$). A family of $s$ sets $\{F_1, \ldots, F_s\}$ is a {\it $\Delta(s)$-system} or a \textit{sunflower with $s$ petals} if, for any distinct $i, j \in [s]$, one has $F_i \cap F_j = \cap_{u = 1}^s F_u$. The set $C = \cap_{u = 1}^s F_u$ is called the {\it core} or {\it kernel} of the sunflower. 

One of the most famous problems in Extremal Set Theory is the  Erd\H os--Rado \cite{erdos1960intersection}  sunflower problem.
\begin{problem}[Erd\H os and Rado, 1960]
\label{problem: erdos sunflower problem}
What is the maximum size $\phi(s, t)$ of a family $\cT$ that consists of sets of size $t$ and does not contain a sunflower with $s$ petals? Show that $\phi(s,t) \le (Cs)^t$ for some absolute constant $C.$
\end{problem}
It is one of Erd\H os' $1000$\$ problems. Erd\H os and Rado proved the bounds  bounds $(s-1)^t\le \phi(s,t)\le (t-1)!(s-1)^t$. Over a long period of time, the best known upper bound on $\phi(s,t)$ was of the form $t^{t (1 + o(1))}$, where the $o(1)$ term depends on $s$, until the breakthrough result of Alweiss, Lovett, Wu and Zhang~\cite{alweiss2021improved}. After a series of further improvements in a series of papers and blogposts~\cite{rao_coding_2020,tao_sunflower_2020,bell_note_2021, hu_entropy_2021, stoeckl_lecture_nodate, rao2023sunflowers}, the bound is $\phi(s, t) \le (C s \ln t)^t$ for some absolute constant $C$. The exact value of $\phi(s,t)$ is unknown for most non-trivial cases, with a notable exception of the case $\phi(s,2)$, where an exact result was obtained by Abbott, Hanson and Sauer~\cite{abbott1972intersection}. Their result is stated in  Theorem~\ref{theorem: sunflower graph case} below.

In 1977, Duke and Erd\H os~\cite{duke1977systems} asked a closely related question.

\begin{problem}[Duke and Erd\H os, 1977]
\label{problem: sunflower problem}
Suppose that a family $\ff \subset \binom{[n]}{k}$ does not contain a sunflower with $s$ petals and core of size exactly $t - 1$. What is the maximum size of $\ff$?
\end{problem}
The cases  $t \in \{2, 3\}$, $k = 3$ and arbitrary $s$  were studied by Duke and Erd\H{o}s~\cite{duke1977systems}, Frankl~\cite{frankl1978extremal}, Chung~\cite{chung1983unavoidable} and Chung and Frankl~\cite{chung1987maximum}. In the last paper, Chung and Frankl managed to determine the exact maximum size of a family of $3$-element sets ($3$-sets) that avoids a sunflower with $s$ petals and core of size $1$ and large $n$. To the best of our knowledge, so far this was the only known {\em exact} result for this problem for $s,t\ge 3$. 

The first main result of this paper is an {\em exact} result for families of larger uniformity that avoid sunflowers with core of size $1$, which extends the result of Chung and Frankl \cite{chung1987maximum}. 

\begin{theorem}
\label{theorem: graph case Duke-Erdos extremal}
Let $s$ be odd and $k \ge 5$ and $n\ge n_0(s,k)$. Let $\ff \subset \binom{[n]}{k}$  be an extremal family that does not contain a sunflower with $s$ petals and the core of size $1$. Then, there exists a graph $G = K_1 \sqcup K_2$ consisting of two disjoint cliques $K_1, K_2$ of size $s$, such that
\begin{align*}
    \ff = \bigg \{F \in \binom{[n]}{k} \mid & |F \cap V(G) | \ge 2 \\
    & \text{ and } \forall i \in \{1,2\} \text{ we have } |F \cap V(K_i)| \neq 1 \bigg \}.
\end{align*}
\end{theorem}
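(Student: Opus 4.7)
The plan is to combine the stability result for the Duke--Erd\H os problem (announced in the abstract as a new ingredient) with the Abbott--Hanson--Sauer theorem for $\phi(s,2)$ (Theorem~\ref{theorem: sunflower graph case}) and then perform a careful local analysis. Setting $t=2$, the starting point is the abstract's reduction: to any $\ff\subset\binom{[n]}{k}$ avoiding a sunflower of $s$ petals and core of size exactly $1$ one can associate an auxiliary graph $H$ on a ``kernel'' $V\subset[n]$, whose edges are the pairs appearing as traces $F\cap V$ of many $F\in\ff$, and up to lower-order terms $|\ff|\le(1+o(1))|H|\binom{n}{k-2}$. First I would invoke the stability form of this reduction to conclude that, for an extremal $\ff$, the graph $H$ itself is extremal for $\phi(s,2)$ and almost every $F\in\ff$ carries exactly one edge of $H$ as its trace on $V$.

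The next step is to pin down $H$. A star $K_{1,s}$ in $H$ at $v$ lifts, by choosing $s$ pairwise-disjoint tails in $[n]\setminus V$ (possible since $n\ge n_0(s,k)$), to a sunflower in $\ff$ with core $\{v\}$, so $H$ is $K_{1,s}$-free; an $s$-matching in $H$ similarly lifts through an auxiliary vertex $u\in[n]\setminus V$ to a sunflower with core $\{u\}$, so $H$ is also $s$-matching-free. Hence $|H|\le\phi(s,2)$, and extremality of $\ff$ forces equality. For odd $s$, Theorem~\ref{theorem: sunflower graph case} identifies the unique extremal graph up to isolated vertices as $K_s\sqcup K_s$, so we may write $V(G)=V(K_1)\sqcup V(K_2)$ with $|V(G)|=2s$.

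With $G$ identified, I would finish the proof in two substeps. First, check that the family $\ff_G$ displayed in the theorem is sunflower-free, so that $|\ff|\ge|\ff_G|$: this is a parity argument -- a putative core $\{v\}\subset V(K_i)$ would force each petal to meet $V(K_i)\setminus\{v\}$, giving $s$ pairwise-disjoint nonempty subsets of a set of size only $s-1$; a core $\{v\}\not\subset V(G)$ would make the $s$ disjoint traces $F_j\cap V(G)$ of size $\ge 2$ partition $V(G)$ into pairs, each pair contained in a single $V(K_i)$, so the number $p$ of pairs inside $V(K_1)$ would satisfy $2p=s$, impossible for odd $s$. Second, show $\ff\subseteq\ff_G$: for any $F\in\ff$ with forbidden trace $(a,b)=(|F\cap V(K_1)|,|F\cap V(K_2)|)$ (i.e.\ $a=1$, $b=1$, or $(a,b)=(0,0)$), construct a sunflower with core of size $1$ against canonical petals already in $\ff$. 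If $a=1$ with $F\cap V(K_1)=\{v\}$, combine $F$ with the $s-1$ petals $\{v,u\}\cup T_u$ as $u$ ranges over $V(K_1)\setminus\{v\}$; if $(a,b)=(0,0)$, fix $u\in F\setminus V(G)$ and combine $F$ with $(s-1)/2$ petals realising a near-perfect matching of $V(K_1)$ and $(s-1)/2$ realising one of $V(K_2)$, all extended through $u$. The tails can be chosen pairwise disjoint and disjoint from $F$ since $n\ge n_0(s,k)$, and the required canonical petals exist in $\ff$ because, by the stability step, $\ff$ agrees with $\ff_G$ on all but $o(n^{k-2})$ sets whereas the relevant families of petals have size $\Theta(n^{k-3})$.

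The main obstacle is turning approximate structure into exact structure. The subtle case are the ``rogue'' $F\in\ff$ with $|F\cap V|\le 1$, which carry no edge of $H$ and so are invisible to the reduction; ruling them out requires showing that the Abbott--Hanson--Sauer extremum is \emph{rigid} in the lifted setting, i.e.\ that no rogue $F$ can be compensated by deleting canonical sets, because the family of canonical petals that one would have to delete to neutralise every potential sunflower through $F$ is already strictly larger than a single set.
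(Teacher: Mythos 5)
Your overall strategy matches the paper's: reduce to the $2$-uniform problem, identify the kernel graph as $K_s\sqcup K_s$ via Theorem~\ref{theorem: sunflower graph case}, verify that the displayed family is sunflower-free by the parity argument (which you carry out correctly), and then eliminate the sets with a forbidden trace by lifting stars and matchings to sunflowers. The elimination of sets with $|F\cap V(K_i)|=1$ also works as you describe, since there the canonical petals run through an \emph{edge} $\{v,u\}$ of $G$, whose degree in $\ff$ is $(1-O_{s,k}(1/n))\binom{n}{k-2}$ by the stability step, so disjoint tails exist.

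The genuine gap is in the case $F\cap V(G)=\varnothing$, which you yourself flag as ``the main obstacle'' but do not resolve. There the canonical petals must all pass through a \emph{triple} $e\cup\{x\}$ with $x\in F\setminus V(G)$, so the relevant petal families have size $\Theta(n^{k-3})$, while first-order stability only guarantees that $\ff$ agrees with the model family up to $O_{s,k}(1/n)\binom{n}{k-2}=O_{s,k}(n^{k-3})$ sets (and your own bookkeeping, comparing $o(n^{k-2})$ against $\Theta(n^{k-3})$, goes the wrong way: $o(n^{k-2})$ need not be $O(n^{k-3})$, let alone smaller than it). One cannot conclude that any triple $e\cup\{x\}$ has large degree, and the ``rigidity'' you invoke is precisely what is missing. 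The paper closes this gap with a second-order structure result (Lemma~\ref{lemma: induction over layers} applied with $q=3$), which upgrades the error term to $O_{s,k}(n^{-2})\binom{n}{k-2}=O_{s,k}(n^{k-4})$ and supplies degree lower bounds for the sets of $\cS$ up to size $3$; then, if some triple $e_j\cup\{x\}$ had degree below $sk\binom{n}{k-4}$, deleting its link from $\ff_{\cS}$ would cost $\Theta(n^{k-3})$ sets against a total slack of only $O(n^{k-4})$, contradicting extremality. Without an ingredient of this kind (iterated stability, or an equivalent direct counting argument at the level of triples), your proof of $\ff\subseteq\ff_G$ does not go through.
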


Here the graph $G$ is coming from the main result of \cite{abbott1972intersection} . It is an extremal $2$-uniform family without a sunflower with $s$ petals (i.e., a graph without a matching of size $s$ or a vertex of degree $s$). It is  unique up to isomorphism. If $s$ is even and larger than $2$, then an extremal example in the Erd\H{o}s sunflower problem for $t = 2$ is not unique, see Theorem~\ref{theorem: sunflower graph case} below. This makes it technically much more demanding to state and prove its analogue for even $s.$

One of the important tools in this area and for this paper is the Delta-system method (see the recent survey of the first author~\cite{kupavskii2025delta}). It was greatly developed by Frankl and F\"uredi in late 70s and 80s~\cite{frankl_forbidding_1985,frankl_forbidding_1985}. Using the delta-system method,  Frankl and F\"uredi in their seminal paper~\cite{Frankl1987} derived the following asymptotically tight bound on a sunflower-free  family $|\ff|$ when $s, k, t$ are fixed and $n$ tends to infinity. 

\begin{theorem}[Frankl and F\"uredi, 1987]
\label{theorem: asymptotic bound due to Frankl}
Let $s, k, t$ be integers, $k \ge 2t + 1$. Suppose that $s, k, t$ are fixed and $n\to \infty$. Then
\begin{align*}
    |\ff| \le (\phi(s, t) + o(1)) \binom{n}{k - t},
\end{align*}
where $\phi(s, t)$ is the maximal size of a $t$-uniform family $\cT$ that does not contain a sunflower with $s$ petals (and arbitrary core).
\end{theorem}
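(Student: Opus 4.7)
The plan is to apply the \emph{Delta-system method}, which (following Frankl and F\"uredi) associates to each $F \in \ff$ a canonical ``kernel'' $K(F) \subseteq F$ of size at most $t$. The idea is to partition $\ff$ according to these kernels and to show that the family of $t$-kernels is a sunflower-free $t$-uniform family, hence of size at most $\phi(s, t)$.

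First I would construct kernels via a greedy procedure. Fix thresholds $\epsilon_0 \gg \epsilon_1 \gg \cdots \gg \epsilon_t > 0$ depending only on $s, k, t$. For each $F \in \ff$, build a chain $\emptyset = K_0 \subsetneq K_1 \subsetneq \cdots \subsetneq K_{j(F)} \subseteq F$ by the rule: at step $j$, if there exists $x \in F \setminus K_{j-1}$ with $|\ff(K_{j-1} \cup \{x\})| \ge \epsilon_j n^{k-j}$, add such an $x$; otherwise stop. With the thresholds tuned appropriately, the chain terminates at $j(F) \le t$ (the hypothesis $k \ge 2t+1$ enters as a book-keeping constraint, ensuring the petal size $k-t \ge t+1$ is adequate in later steps). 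Set $K(F) = K_{j(F)}$ and partition $\ff = \ff_0 \sqcup \cdots \sqcup \ff_t$ according to $|K(F)|$.

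Next I would bound $|\ff_j|$ for $j < t$ by $o(n^{k-t})$. For such $F$, every $x \in F \setminus K(F)$ satisfies $|\ff(K(F) \cup \{x\})| < \epsilon_{j+1} n^{k-j-1}$ (the stopping condition). A standard double-counting argument using this, together with the fact that the number of possible $j$-kernels is at most $O(|\ff| / (\epsilon_j n^{k-j}))$, bounds $|\ff_j|$ by roughly $(\epsilon_{j+1}/\epsilon_j) \cdot n^{k-t}$, which is $o(n^{k-t})$ provided the thresholds decay fast enough.

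The crux is then to show $|\mathcal K_t| \le \phi(s,t)$, where $\mathcal K_t = \{K(F) : F \in \ff_t\}$, by proving that $\mathcal K_t$ contains no $s$-sunflower. Suppose toward contradiction that $K_1, \ldots, K_s \in \mathcal K_t$ form a sunflower with core $T$ of size $r \le t - 1$. Since each $K_i$ is abundant ($|\ff(K_i)| \ge \epsilon_t n^{k-t}$) and only $O_s(1)$ vertices need to be avoided in the extensions, a greedy selection produces $F_i \in \ff(K_i)$ with $F_i \cap F_j = T$ for $i \ne j$; hence $F_1, \ldots, F_s$ form an $s$-sunflower in $\ff$ with core of size exactly $r$. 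For $r = t - 1$ this directly contradicts the hypothesis on $\ff$. \textbf{The main technical obstacle is the case $r < t - 1$}: the lifted core has size strictly less than $t-1$, so the defining hypothesis does not immediately give a contradiction. One resolves this by exploiting the canonical chain structure of the greedy kernel construction --- the kernels $K_1, \ldots, K_s$ share the initial segment of their chains leading to $T$, and the next ``heavy'' step from $T$ must be compatible across all $s$ chains, which forces $T$ itself to have a common heavy extension and allows one to enlarge the core step by step until its size equals exactly $t-1$ and the hypothesis applies. Granting this, each $t$-kernel contributes at most $\binom{n-t}{k-t} = (1+o(1))\binom{n}{k-t}$ sets to $\ff_t$, and summing with the small-kernel bounds yields $|\ff| \le (\phi(s,t) + o(1))\binom{n}{k-t}$.
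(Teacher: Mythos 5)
Your proposal takes the same broad philosophy as the paper (the Delta-system method, partitioning $\ff$ by $t$-element kernels, showing the kernel family is sunflower-free), but the implementation is genuinely different, and the difference is precisely where a gap appears. The paper invokes the Frankl--F\"uredi structural lemma (Lemma~\ref{lemma: delta-system method}), which provides a subfamily $\ff^*$ and for each $F \in \ff^*$ a $t$-set $T(F) \subset F$ with the strong property that \emph{every} $E$ with $T(F) \subset E \subsetneq F$ is the core of a large sunflower in $\ff^*$. Crucially, the paper then decomposes $\ff^*$ by the \emph{external part} $D = F \setminus T(F)$, defining $\ff_D = \{F\setminus D : F\in\ff^*,\ F \setminus D = T(F)\}$. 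If $T_1,\dots,T_s \in \ff_D$ form a sunflower with core of size $r < t-1$, the paper augments the core by an arbitrary $X \subset D$ of size $t-1-r$; since $X$ is contained in \emph{every} $F_i = T_i \cup D$, the structural lemma applies to $T_i \cup X$, and Claim~\ref{claim: no sk cores of sunflower} lifts the sunflower to one in $\ff^*$ with core of size exactly $t-1$.

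Your greedy, degree-threshold kernel construction breaks this mechanism. You decompose $\ff_t$ by kernel $K(F)$, not by external part, so the sets $F$ realising different kernels $K_1,\dots,K_s$ share no distinguished exterior set $D$ from which to draw the augmenting $X$. Abundance of $K_i$ alone (i.e.\ $|\ff(K_i)|\ge\epsilon_t n^{k-t}$) does \emph{not} give control of $|\ff(K_i\cup X)|$ for a given $X$; that degree could even be zero, and one would need a simultaneous good $X$ for all $i$, which your argument does not establish. The fix you sketch --- ``the kernels share the initial segment of their chains leading to $T$, and the next heavy step from $T$ must be compatible across all $s$ chains'' --- is not justified: the chain for $F$ is built pointwise in $F$, so two sets $F,F'$ realising $K_1\neq K_2$ have independent chains, and there is no reason they should pass through $T = \cap_i K_i$ or agree on a heavy extension of $T$. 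Indeed, the situation ``$T$ has $s$ distinct heavy extensions, one per chain'' is exactly the scenario producing the $s$ different kernels, and nothing forces a shared one. This is the essential content of the Frankl--F\"uredi lemma that your construction lacks: it guarantees not only that the kernel has high degree but that \emph{all} intermediate sets between $T(F)$ and $F$ are cores of large sunflowers, which is what makes the $r < t-1$ augmentation go through. (A secondary, fixable issue: your bound $|\ff_j| = o(n^{k-t})$ for $j<t$ as written bounds $|\ff_j|$ by a fraction of $|\ff|$, which is circular without an a priori bound $|\ff| = O(n^{k-t})$; this requires an additional argument or induction.)
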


One of the major complications for the study of this problem is that we know little about the behavior of the function $\phi(s, t)$. However, Frankl and F\"uredi managed to prove their Theorem~\ref{theorem: asymptotic bound due to Frankl}  even without much understanding of the behaviour of this function.

Recently, the case $k = 4$ was studied by Buci{\'c} et al. in~\cite{bucic2021unavoidable} in the regime when we allow $s$ to grow with $n$. Later, Brada{\v{c}}, Buci{\'c} and Sudakov~\cite{bradavc2023turan} generalized the previous result and proved that an extremal family in Problem~\ref{problem: sunflower problem} has size $O_k(n^{k - t} s^t)$ when $n$ tends to infinity, $s$ grows arbitrarily with $n$, and $k$ is fixed. The complementary scenario (when $k$ can grow linearly with $n$, and $s,t$ are small) was studied by the authors in~\cite{kupavskii2025lineardependenciespolynomialfactors}, where the authors of this paper managed to determine the  solution to the problem up to a mutliplicative factor of $(1+o(1))$. 

Problem~\ref{problem: sunflower problem} generalizes two famous problems proposed by Erd\H{o}s and coauthors, namely Erd\H{o}s--S\'os forbidden intersection problem~\cite{erdos1975problems} and Erd\H{o}s Matching Conjecture~\cite{erdos1965problem}. The Erd\H{o}s--S\'os problem has numerous applications in discrete geometry~\cite{frankl1990partition, frankl1981intersection}, communication complexity~\cite{sgall1999bounds} and quantum computing~\cite{buhrman1998quantum}. Attempts to solve it contributed to development and improvements of a large variety of combinatorial methods, including the delta-systems method~\cite{frankl_forbidding_1985, janzer2025sunflowers}, the junta method~\cite{keller2021junta, ellis2024stability, ellis2023forbidden}, the spread approximation technique~\cite{kupavskii_spread_2022, kupavskii2025lineardependenciespolynomialfactors}, and the hypercontractivity approach~\cite{keevash2023forbidden}. Some partial cases of this problem were studied in~\cite{frankl1981intersection, cherkashin2024set, keevash2006set}. Meanwhile, the Erd\H{o}s Matching Conjecture (EMC) has applications in concentration of measure theory~\cite{alon2012nonnegative,alon2012large_matchings} and distributed memory allocation~\cite{alon2012large_matchings}. Great progress on EMC was made in papers~\cite{bollob1976sets,huang2012size,frankl2012matchings,frankl_improved_2013, frankl2017proof, frankl2022erdHos,kolupaev2023erdHos}. Let us  also  mention a recent generalization of Problem~\ref{problem: sunflower problem}, proposed by authors of~\cite{janzer2025sunflowers}, when the size of the core of a sunflower is allowed to take values in some set $L$. This generalization captures a well-known problem of $(n, k, L)$-systems~\cite{deza_intersection_1978}.

Theorem~\ref{theorem: asymptotic bound due to Frankl} is essentially sharp, as the following example due to Frankl and F\"uredi demonstrates.

\begin{example}[{\cite[Example 2.3]{Frankl1987}}]
\label{example: basic example}
Suppose that $n > t \phi(s, t)$. Choose a family $\cT \subset \binom{[n]}{t}$ that does not contain a sunflower with $s$ petals, and $|\cT| = \phi(s, t)$. Then, define a family $\ff$ as follows:
\begin{align*}
    \ff = \left \{F \in \binom{[n]}{k} \mid F \cap \support(\cT) \in \cT \right \}.
\end{align*}
The family $\ff$ does not contain a sunflower with $s$ petals and the core of size $t - 1$.
\end{example}

Indeed, the example provides a family $\ff$ without a sunflower with $s$ petals and the core of size at most $t - 1$ such that
\begin{align*}
    |\ff| & \ge \phi(s, t) \binom{n - t \phi(s, t)}{k - t} =(1+o(1))
   \phi(s, t) \binom{n}{k - t}.
\end{align*}

Although Frankl and F\"uredi managed to obtain the asymptotic solution to the Duke--Erd\H os problem in the regime $n>n_0(s,k),$ they did not prove any results concerning the structure of extremal results. A big obstacle in that respect is that a stability result was missing. Here, the situation is in contrast with the aforementioned Erd\H os--S\'os problem. Let us formulate the corresponding extremal question: what is the largest size of the family $\ff\subset {[n]\choose k}$ that has no two sets $A,B$ with $|A\cap B| = t-1$ (i.e., a family that avoids sunflowers with $2$ petals and core of size $t-1$)? There, we should mention a result of Keevash, Mubayi, and Wilson \cite{keevash2006set}, who proved a stability result for the Erd\H os-S\'os problem for $t=1$. Actually, a stability argument is implicitly present in the original paper of Frankl and F\"uredi \cite{frankl_forbidding_1985}, where they used the delta-system method to find an exact solution to the problem for $k\ge 2t-1$ and $n\ge n_0(k)$. For the discussion of stability in this setting, we refer to Section 6 of the aforementioned survey \cite{kupavskii2025delta} of the first author. 

The difficulty of the Duke--Erd\H os problem is again in our poor understanding of $\phi(s,t)$ and the corresponding extremal families. In this paper, we fill this gap and prove a stability result. The following result can be thought of as  a `first-order' stability for  this problem.

\begin{theorem}
\label{theorem: stability in Erdos-Duke}
Let $k \ge 2t + 1$. Let $\ff$ be a family that does not contain a sunflower with $s$ petals and the core of size $t - 1$. Suppose that $|\ff| \ge (1 - \delta) \phi(s, t) \binom{n}{k - t}$ for some $\delta > 0$. Then, there exist a family $\cT \subset \binom{[n]}{t}$ that does not contain a sunflower with $s$ petals and a constant $C(s, k)$ depending on $s,k$ only, such that
\begin{align*}
    |\ff\setminus\ff[\cT]| \le \left (3k\phi^2(s, t) \cdot  \delta + \frac{C(s, k)}{n} \right ) \cdot \phi(s, t) \binom{n - t}{k - t},
\end{align*}
provided $n \ge n_0(s, k)$ for some function $n_0(s, k)$.
\end{theorem}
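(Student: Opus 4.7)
The plan is to exploit the delta-system method of Frankl and F\"uredi that underlies Theorem~\ref{theorem: asymptotic bound due to Frankl}, upgrading it to a stability statement by a careful accounting of losses. Concretely, the goal is to extract a sunflower-free family $\cT \subset \binom{[n]}{t}$ from $\ff$ via a canonical ``kernel'' assignment, and then bound $|\ff \setminus \ff[\cT]|$ by splitting the leakage into an ``exceptional'' piece (contributing the $C(s,k)/n$ term) and a ``deficit-driven'' piece (contributing the $3k\phi^2(s,t)\delta$ term).

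Step 1 (Kernel extraction). Following Frankl and F\"uredi's delta-system procedure (see the survey~\cite{kupavskii2025delta} for an exposition), one assigns to each $F\in\ff$ a kernel $K(F) \subseteq F$, roughly the minimal subset of $F$ that is the core of a maximal $\ff$-sub-sunflower containing $F$. Standard delta-system estimates, valid because $n \ge n_0(s,k)$, show that the subfamily of $\ff$ consisting of sets with $|K(F)| < t$, together with other degenerate configurations arising in the induction, has size at most $\tfrac{C(s,k)}{n}\phi(s,t)\binom{n-t}{k-t}$. Write $\ff^* \subseteq \ff$ for the remaining family and $\cT := \{K(F) : F \in \ff^*\} \subseteq \binom{[n]}{t}$.

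Step 2 (Sunflower-freeness of $\cT$). We claim that $\cT$ contains no sunflower with $s$ petals. Indeed, suppose $T_1,\ldots,T_s \in \cT$ formed such a sunflower with core $C$ of size $r \le t-1$. Select representatives $F_i \in \ff^*$ with $K(F_i)=T_i$. The fibers $\{F \in \ff^*: K(F)=T_i\}$ contain many sets, so --- using $n \gg k\phi(s,t)$ --- a greedy procedure (extended by a pigeonhole promotion when $r<t-1$, upgrading $C$ to a common extension $C^+$ of size $t-1$) produces $F_i$'s with pairwise intersection exactly $C^+$. This is a sunflower in $\ff$ with core of size \emph{exactly} $t-1$, contradicting the hypothesis on $\ff$; therefore $|\cT| \le \phi(s,t)$.

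Step 3 (Leakage bound). We have the decomposition
\begin{align*}
\ff \setminus \ff[\cT] \;\subseteq\; (\ff \setminus \ff^*) \;\cup\; \{F \in \ff^* : F \cap \support(\cT) \neq K(F)\}.
\end{align*}
The first piece is absorbed into the $C(s,k)/n$ term by Step 1. For the second, each such $F$ contains $K(F)$ together with an extra vertex of $\support(\cT) \setminus K(F)$; since $|\support(\cT)| \le t\phi(s,t)$, the count is $O(k\phi^2(s,t)/n)\binom{n-t}{k-t}$, again absorbed into $C(s,k)/n$. The $3k\phi^2(s,t)\delta$ factor enters through the size hypothesis $|\ff| \ge (1-\delta)\phi(s,t)\binom{n}{k-t}$: the deficit $\delta\phi(s,t)\binom{n}{k-t}$ bounds the number of sets whose kernel could plausibly be reassigned to a different $t$-set, with a branching factor of roughly $k$ (choices of vertex to swap in or out of the kernel) and $\phi(s,t)$ (choices of neighbouring kernel in the already constructed $\cT$); combined with the $\binom{n}{k-t}/\binom{n-t}{k-t} \le 1 + O(k/n)$ conversion between the two binomial norms, this yields the stated constant.

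The principal obstacle is Step 2: lifting a sunflower in $\cT$ to a sunflower in $\ff$ with core of size exactly $t-1$. Cores of size $r<t-1$ must be promoted, which requires robust lower bounds on the fiber sizes and a delicate greedy/pigeonhole step; this is where the hypothesis $n \ge n_0(s,k)$ is essential. Pinning down the precise constants $3k\phi^2(s,t)$ in Step 3 also demands careful book-keeping to avoid double-counting when a set of $\ff^*$ is adjacent (in the kernel sense) to more than one element of $\cT$.
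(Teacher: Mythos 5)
Your proposal takes a fundamentally different (and, as written, unsuccessful) route from the paper, and the central gap is in Step~2. You define $\cT := \{K(F) : F\in\ff^*\}$, the set of \emph{all} kernels, and want to show this family is sunflower-free so that $|\cT|\le\phi(s,t)$. The lifting argument you sketch does not go through: to promote a sunflower $T_1,\ldots,T_s$ with core $C$ of size $r<t-1$ to one of core size $t-1$, you would need a single set $M$ of size $t-1-r$, disjoint from $\bigcup_i T_i$, with $T_i\cup M\subsetneq F_i$ for \emph{every} $i$ (so that each $T_i\cup M$ is the core of a big sunflower by the delta-system property). But your representatives $F_i$ are chosen independently from their fibers and there is no reason for them to share a common $M$ outside the kernels; the fibers also need not be large, and under the hypothesis $k\ge 2t+1$ there is not even room in a single $F_i$ to fit $M$ once $\bigcup_j T_j$ is removed. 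In the paper, this is exactly why one does \emph{not} pool all kernels into one family: instead, for each $D\in\binom{[n]}{k-t}$ one considers the fiber $\ff_D=\{F\setminus D : F\in\ff^*,\; F\setminus D=T(F)\}$, and the common ambient $D$ supplies the $M$ needed to apply Claim~\ref{claim: no sk cores of sunflower}. That makes each $\ff_D$ sunflower-free — but different $D$'s can yield different $\ff_D$'s, which is the whole difficulty.

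Relatedly, your Step~3 is a heuristic, not a proof: a ``branching factor of roughly $k$'' and ``choices of neighbouring kernel'' do not amount to a bound, and they do not explain why a single sunflower-free family of size $\le\phi(s,t)$ should capture almost all of $\ff$. The paper's actual mechanism is completely absent from your outline: one shows the ``saturated'' set $\cD=\{D: |\ff_D|=\phi(s,t)\}$ has density at least $1-\delta\phi(s,t)-O(1/n)$ in $\binom{[n]}{k-t}$, that adjacent saturated $D_1,D_2$ in the Johnson graph $J_{n,k-t}$ force $\ff_{D_1}=\ff_{D_2}$ (here the promotion works because $M$ can be taken inside $D_1\cap D_2$, which has size $k-t-1\ge t-1$), and then Cheeger's inequality applied to the spectral gap $n$ of the Johnson graph shows a single connected component $\cS_0$ covers almost all of $\cD$. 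The constant $3k\phi^2(s,t)$ comes precisely from this isoperimetric estimate ($|\cD\setminus\cS_0|\le 2k\cdot(\delta\phi(s,t)+O(1/n))\binom{n}{k-t}$) combined with $|\binom{[n]}{k-t}\setminus\cD|\le(\delta\phi(s,t)+O(1/n))\binom{n}{k-t}$ and the trivial bound $|\ff_D|\le\phi(s,t)$. Without the fiber decomposition and the expansion argument, the theorem's conclusion is not reachable.
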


The idea behind the proof of Theorem~\ref{theorem: stability in Erdos-Duke} is based on expansion of the Johnson graph on $\binom{[n]}{k - t}$, which can be thought as an action of a certain random walk on the shadow of $\ff$. Related techniques were used previously in  Extremal Set Theory, cf. the seminal proof of the Erd\H{o}s--Ko--Rado theorem by Lov\'asz~\cite{lovasz1979shannon}, application of Along and Chung bound~\cite{alon1988explicit} to the analysis of the Erd\H{o}s Matching Conjecture~\cite{frankl2022erdHos},  hypercontracitivty approach of Keevash, Lifshitz, Keller and coathors~\cite{keevash2021global, keevash2023forbidden} based on the action of a certain semigroups or  proofs of the spread lemma~\cite{mossel2022second, tao_sunflower_2020, stoeckl_lecture_nodate} based on certain Markov chains. However, as our paper shows, the connection between the hypergraph Turan-type problems and the semigroup/Markov chains theory is still under development. Because of the lack of our knowledge on Problem~\ref{problem: erdos sunflower problem}, the techniques that we use are not that specific for Duke--Erd\H os Problem~\ref{problem: sunflower problem}. In our further research, we plan to apply it to other Turan-type problems, combining with other available techniques.

Theorem~\ref{theorem: stability in Erdos-Duke} is the starting point for iteratively getting higher-order stability for extremal $|\ff|$ that gradually reveals the structure of extremal families. Building on this theorem, we prove the following theorem, which guarantees in some regimes of the parameters that any extremal family $\ff$ must in fact contain the family from Example~\ref{example: basic example}.
\begin{theorem}
\label{theorem: structural duke-erdos}
Let $k \ge \max\{2t + 1, 2 t (t - 1)\}$ and $n\ge n_0(s,k)$. Let $\ff \subset \binom{[n]}{k}$ be an extremal family that does not contain a sunflower with $s$ petals and the core of size $t - 1$. Then, there exists a family $\cT \subset \binom{[n]}{t}$ that does not contain a sunflower with $s$ petals such that $|\cT| = \phi(s,t)$ and
\begin{align*}
    \left \{F \in \binom{[n]}{k} \mid F \cap \support(\cT) \in \cT \right \} \subset \ff.
\end{align*}
In particular, for each $F \in \ff$, we have either $T \subset F$ for some $T \in \cT$ or $|F \cap \support \cT| \ge t + 1$.
\end{theorem}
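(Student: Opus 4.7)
The plan begins with Theorem~\ref{theorem: stability in Erdos-Duke}. Since $\ff$ is extremal, Theorem~\ref{theorem: asymptotic bound due to Frankl} together with the construction in Example~\ref{example: basic example} gives $|\ff|=(1-o_n(1))\phi(s,t)\binom{n}{k-t}$; applying the stability theorem with $\delta\to 0$ yields a sunflower-free $\cT\subset\binom{[n]}{t}$ with $|\ff\setminus\ff[\cT]|=o\bigl(\binom{n-t}{k-t}\bigr)$. A direct verification shows $\ff[\cT]$ is itself sunflower-free in the Duke--Erd\H{o}s sense: an $s$-sunflower in $\ff[\cT]$ with $(t-1)$-core $C$ projects via $F\mapsto F\cap\support(\cT)$ to an $s$-sunflower in $\cT$ with core $C\cap\support(\cT)$, contradicting the choice of $\cT$. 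By extremality of $\ff$ we have $|\ff|\ge|\ff[\cT]|=|\cT|\binom{n-|\support(\cT)|}{k-t}$, and integrality of $|\cT|$ combined with the stability bound forces $|\cT|=\phi(s,t)$.

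The main step is to show $\ff[\cT]\subset\ff$. Suppose for contradiction that $F_0\in\ff[\cT]\setminus\ff$. Extremality of $\ff$ then produces a forbidden sunflower $\{F_0,F_1,\ldots,F_{s-1}\}$ in $\ff\cup\{F_0\}$ with some $(t-1)$-core $C_0\subset F_0$; write $T_0:=F_0\cap\support(\cT)\in\cT$ and $\cE:=\ff\setminus\ff[\cT]$. If $C_0\subset\support(\cT)$, then for each $F_i\in\ff[\cT]$ the trace $T_i:=F_i\cap\support(\cT)$ equals $C_0\cup\{y_i\}$ with the $y_i$ distinct and distinct from $T_0\setminus C_0$ (by the sunflower condition); thus if \emph{all} $F_i\in\ff[\cT]$, then $\{T_0,T_1,\ldots,T_{s-1}\}$ is an $s$-sunflower in $\cT$, contradicting the first step. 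Hence each obstruction must consume at least one $\cE$-set. The case $C_0\not\subset\support(\cT)$ is tighter, since only $O(n^{k-t-1})$ sets of $\ff[\cT]$ contain a given displaced $C_0$, so the obstruction is forced to draw essentially all petals from $\cE$. I plan to convert these qualitative facts into $\ff[\cT]\setminus\ff=\emptyset$ either by a swap argument (replacing an obstructing $\cE$-set by $F_0$ preserves sunflower-freeness and size; iterating eventually produces an extremal family that contains $\ff[\cT]$, which, run in reverse, yields the desired containment for the original $\ff$) or by a careful double count across a canonical choice of $(t-1)$-core inside $T_0$.

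For the dichotomy, suppose $F\in\ff$ violates the `in particular' clause, i.e., $|F\cap\support(\cT)|\le t$ with no $T\in\cT$ satisfying $T\subset F$. Pick a $(t-1)$-set $C\subset F$ disjoint from $\support(\cT)$, which is feasible since $|F\setminus\support(\cT)|\ge k-t\ge t-1$. Using that $\cT$ is an extremal Erd\H{o}s--Rado family and the hypothesis $k\ge 2t(t-1)$ to guarantee enough room in $\support(\cT)$ to avoid $F\cap\support(\cT)$ (a set of size at most $t$), extract $s-1$ pairwise disjoint $T_1,\ldots,T_{s-1}\in\cT$ disjoint from $F$, and then greedily select pairwise disjoint fillers $R_i\subset[n]\setminus(\support(\cT)\cup F\cup C\cup\bigcup_{j<i}(T_j\cup R_j))$ of size $k-2t+1$ (available for $n$ large). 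The sets $G_i:=T_i\cup C\cup R_i$ lie in $\ff[\cT]\subset\ff$ by the previous step, and $\{F,G_1,\ldots,G_{s-1}\}$ forms a forbidden $s$-sunflower in $\ff$ with core $C$, a contradiction.

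The main obstacle is the quantitative closure in the second step: passing from `every obstruction uses an $\cE$-set' to the strict conclusion $\ff[\cT]\setminus\ff=\emptyset$. The trivial double count only yields $|\ff[\cT]\setminus\ff|\le|\cE|$, which is already implied by extremality; the extra leverage must come from iterating a swap argument or from exploiting the rigid structure of $\cT$-traces of obstructing sets. The displaced-core case ($C_0\not\subset\support(\cT)$) adds further bookkeeping, as the degree of $\ff[\cT]$-sets through partly-displaced cores must be tracked carefully; the role of the somewhat unusual hypothesis $k\ge 2t(t-1)$ (rather than just $k\ge 2t+1$) will be to ensure sufficient matching-like structure in $\cT$ avoiding a prescribed $\le t$-element subset, so that the sunflower-completion argument in the dichotomy step succeeds.
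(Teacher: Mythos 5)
Your proposal opens along the right lines --- the paper too derives a candidate $\cT$ of size $\phi(s,t)$ from the stability theorem (this is Corollary~\ref{corollary: corollary of the stability}) --- but then the two arguments diverge, and where they diverge is precisely where yours has an acknowledged and genuine hole.

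The paper does not try to establish $\ff[\cT]\subset\ff$ directly from the stability bound. Instead it first builds, via Lemma~\ref{lemma: induction over layers} with $q=2t-1$, a multi-layer family $\cS$ (of uniformities $t,t+1,\ldots,2t-1$, all supported on $\support\cT$) together with the crucial property~\ref{item: fat restrictions over S}: every $S\in\cS$ has nearly full degree, $|\ff(S,\support\cS^{(t)})| = (1-O_{s,k}(1/n))\binom{n}{k-|S|}$. This fat-degree property is what powers the cross-matching Proposition~\ref{proposition: cross matching}: given a would-be sunflower in $\ff_\cT\cup\ff$, one shows directly that the pieces can be replaced by actual members of $\ff$ so that $\ff$ itself contains a forbidden sunflower. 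The displaced-core case ($\ell>0$) additionally exploits that $|\ff\setminus\ff_\cS|=O_{s,k}(n^{-t})\binom{n}{k-t}$ to rule out, by counting against the extremal example, the possibility that some relevant trace has low degree. The hypothesis $k\ge 2t(t-1)$ is exactly what Lemma~\ref{lemma: induction over layers} demands ($k\ge(q+1)(t-1)$ with $q=2t-1$); it is not, as you surmise, to guarantee a large matching inside $\cT$.

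Your proposed route around this --- a swap argument --- does not close the gap. Swapping out one obstructing $\cE$-set and swapping in $F_0$ may create new forbidden sunflowers: you only know each sunflower through $F_0$ uses \emph{some} $\cE$-set, not the one you removed, so the iteration has no clear terminating potential function and sunflower-freeness after the swap is unverified. The double-count alternative you mention gives, as you note yourself, only $|\ff[\cT]\setminus\ff|\le|\cE|$, which extremality already implies; there is no lever to push it to zero without the fat-degree structure.

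Your dichotomy step has an independent problem: you assert the existence of $s-1$ pairwise disjoint members of $\cT$ disjoint from $F$. Nothing guarantees $\cT$ even contains $s-1$ pairwise disjoint sets (its matching number can be as small as $s-1$, e.g.\ two $s$-cliques for $t=2$ and $s$ odd, but in general it is not forced), let alone ones that avoid a prescribed $\le t$-element subset of $\support\cT$. The paper's argument is different and correct: by the \emph{maximality} of $\cT$ (if $F\cap\support\cT$ together with a filler from $F\setminus\support\cT$ could be added, $|\cT|$ would exceed $\phi(s,t)$), there exist $T_1,\ldots,T_{s-1}\in\cT$ forming a sunflower with $F\cap\support\cT$; one then completes these $T_i$ into $k$-sets of $\ff_\cT\subset\ff$ using cross-matching. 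The petals need not be pairwise disjoint as sets in $\cT$ --- they share the sunflower core --- which is the point your construction misses.
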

The appearance of $\phi(s, t)$ and $\mathcal T$ in  Theorems~\ref{theorem: asymptotic bound due to Frankl},~\ref{theorem: structural duke-erdos} and Example~\ref{example: basic example} suggests that there may be a `black-box' reduction from a `high-dimensional' Duke--Erd\H os Problem~\ref{problem: sunflower problem} to the `low-dimensional' Problem~\ref{problem: erdos sunflower problem}. It turns out that is is not true {\em exactly}, but {\em approximately}. The Problem~\ref{problem: sunflower problem} is rather a `first-order approximation' of the appropriate low-dimensional problem.  There is indeed an extremal problem  independent of $n,k$ such that the Duke--Erd\H os problem can be reduced to it, provided $n,k$ are large enough. We present it in Section~\ref{section: extremal families}. In that same section we present an exact extremal result (Theorem~\ref{theorem: exact erdos-duke}) for the Duke--Erd\H os problem in terms of this `low-dimensional' problem. In fact, Theorem~\ref{theorem: graph case Duke-Erdos extremal} can be seen as a particular instance of this general result with a clean structure of the extremal example.

The paper is organized as follows. In Section~\ref{section: notation}, we introduce some additional notation. In Section~\ref{section: extremal families}, we describe the extremal families of Problem~\ref{problem: sunflower problem} in terms of a  a certain extremal problem independent of $n, k$. In Section~\ref{section: tools}, we list ancillary results used in our proofs. In Section~\ref{section: stability of ED -- proof}, we prove our main technical  Lemma~\ref{lemma: induction over layers} and deduce Theorem~\ref{theorem: stability in Erdos-Duke} from it. In Section~\ref{section: proof of structural duke-erdos}, we prove Theorem~\ref{theorem: structural duke-erdos}. In Sections~\ref{section: proof of theorem -- graph case} and~\ref{section: proof of the exact duke-erdos}, we prove Theorem~\ref{theorem: graph case Duke-Erdos extremal} and Theorem~\ref{theorem: exact erdos-duke} respectively.

\subsection{Notation}
\label{section: notation}
Given a set $X$ and an integer $h$, we denote by $\binom{X}{h}$ the family of all subsets of $X$ of size $h$. 

For a family $\ff$ and sets $A\subset B$, we define
\begin{align*}
    \ff(A, B) = \{F \setminus B \mid F \in \ff \text{ and } F \cap B = A\}. 
\end{align*}
If $A = B$, we denote $\ff(B) = \ff(B, B)$. If $B$ is a singleton $\{x\}$, we omit braces and use $\ff(x) = \ff(\{x\})$. 
Next, if $\cB$ is a family of sets, then we define
\begin{align*}
    \ff[\cB] = \{F \in \ff \mid \exists B \in \cB \text{ such that } B \subset F\}.
\end{align*}
If $\cB = \{B\}$ is a singleton of a set $B$, we omit braces and use $\ff[B] = \ff[\{B\}]$. We define the support of a family $\ff$ as $\support(\ff) = \bigcup_{F\in \ff}F$. For families $\ff$ and $\cB$, we define
\begin{align*}
    \ff \vee \cB = \{F \cup B\mid F \in \ff \text{ and } B \in \cB\}.
\end{align*}

A family $\ff$ is  {\it $k$-uniform} ({\it $\le k$-uniform}), if all sets in $\ff$ have size exactly $k$ (at most $k$). 
Given a family $\ff$ and a number $h$, we define its $h$-th layer $\ff^{(h)}$ as follows:
\begin{align*}
    \ff^{(h)} = \{F \in \ff \mid |F| = h \}.
\end{align*}
The {\it shadow} of a family $\ff$ on the layer $h$ is defined as follows:
\begin{align*}
    \partial_h \ff = \bigcup_{F \in \ff} \binom{F}{h}.
\end{align*}
Given numbers $n, k, t$ and a family $\cS \subset \bigcup_{q = t}^n \binom{[n]}{q}$, we define a family $\ff_{\cS} \subset \binom{[n]}{k}$ as follows:
\begin{align*}
    \ff_{\cS} = \left \{ F \in \binom{[n]}{k} \mid F \cap \support \cS^{(t)} \in \cS \right \}.
\end{align*}

We are working in the regime when $k,s$ are fixed and $n\to\infty$, and so it is convenient to use the $O$-notation. For two functions $f_1(n, s,k)$ and $f_2(n, s, k)$, we say $f_1 = O_{s,k}(f_2)$ if for any fixed $s, k$ there are some constants $C=C(s,k), n_0(s,k)$ such that 
\begin{align*}
    |f_1(n, s, k)| \le C |f_2(n, s, k)|
\end{align*}
for any $n \ge n_0$.

\section{Extremal families}
\label{section: extremal families}

In this section, we state a result that shows that the Duke--Erd\H os problem for large $k=k(s,t)$ and $n=n(k,s,t)$ can be reduced to another extremal problem that is independent of $n, k$. First, we describe the corresponding extremal problem, together with an extremal family. 

Denote $T = t \cdot \phi(s, t)$. Throughout this paper, we use $T$ as the universal upper bound of $\support \cT$ for any $t$-uniform family $\cT$ without a sunflower of size $s$. Let $\cS_*$ be a family with the following properties:
\begin{enumerate}
    \item \label{item: sunflower freeness S*}  $\cS_*$ does not contain a sunflower with $s$ petals and the core of size at most $t - 1$;
    \item \label{item: lower levels emptiness of S*} $\cS_* \subset \bigcup_{q = t}^T \binom{\support \cS^{(t)}_*}{q}$;
    \item \label{item: lexcicographic maximality S*}  a vector-valued function $\widetilde{\phi}(\cS_*) = (\widetilde{\phi}_0(\cS_*), \ldots, \widetilde{\phi}_{T - t}(\cS_*))$ which components are defined as
    \begin{align*}
        \widetilde{\phi}_0(\cS) & = |\cS^{(t)}|, \\
        \widetilde{\phi}_i(\cS) & = |\cS^{(t + i)}| + \sum_{j = 0}^{i - 1} |\cS^{(t + j)}| \binom{T - |\support \cS^{(t)}|}{i - j};
    \end{align*}
    is maximal in the lexicographical order over all choices of $\cS_*$ satisfying properties~\ref{item: sunflower freeness S*}-\ref{item: lower levels emptiness of S*}. Here we assume that $\binom{m}{a} = 0$ if $m < a$.
\end{enumerate}
Then, define
\begin{align}
\label{eq: extremal example refined}
\ff_* = \left \{F \in \binom{[n]}{k} \mid F \cap \support \cS^{(t)}_* \in \cS_* \right  \}.
\end{align}
Let us show that $\ff_*$ does not contain a sunflower with $s$ petals and the core of size $t - 1$. Suppose that such a sunflower $F_1, \ldots, F_s \in \ff$ exists. Then, $F \cap \support \cS^{(t)}_*, \ldots, F_s \cap \support \cS^{(t)}_*$ form a sunflower with $s$ petals and the core of size at most $t - 1$. But $F_i \cap \support \cS^{(t)}_* \in \cS_*$, contradicting property~\ref{item: sunflower freeness S*}.

Naturally, we know very little of the possible structure of $\cS^*$, except in some special cases. It is well possible that the family $\cS_*$ is not unique. However, we are still able to prove the following theorem. 

\begin{theorem}
\label{theorem: exact erdos-duke}
Let $k \ge (t + T)(t - 1)$. Let $\ff \subset \binom{[n]}{k}$ be an extremal family that does not contain a sunflower with $s$ petals and the core of size $t - 1$. Then, $\ff = \ff_*$ for some $\cS_*$ satisfying properties~\ref{item: sunflower freeness S*}-\ref{item: lexcicographic maximality S*}, provided $n$ is large enough as a function of $s, k$.
\end{theorem}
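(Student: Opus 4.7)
The strategy is to extract the family $\cS_*$ directly from $\ff$ via the structural Theorem~\ref{theorem: structural duke-erdos}, and then upgrade this extraction to exact equality through extremality. Since $k \ge (t+T)(t-1) \ge 2t(t-1)$, Theorem~\ref{theorem: structural duke-erdos} applies and yields a sunflower-free family $\cT \subset \binom{[n]}{t}$ with $|\cT| = \phi(s,t)$, the inclusion $\ff_\cT \subset \ff$, and a structural dichotomy: every $F \in \ff$ either contains some $T \in \cT$ or has $|F \cap V| \ge t+1$, where $V := \support \cT$, so in particular $|V| \le T$. Define $\cS := \{F \cap V : F \in \ff\}$. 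Then $\cS \subset \bigcup_{q=t}^{T}\binom{V}{q}$, giving property~\ref{item: lower levels emptiness of S*}, while the dichotomy together with $\ff_\cT \subset \ff$ forces $\cS^{(t)} = \cT$ and hence $\support \cS^{(t)} = V$.

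The heart of the argument is to establish that $\cS$ itself is sunflower-free (property~\ref{item: sunflower freeness S*}). Suppose toward contradiction that $A_0, \ldots, A_{s-1} \in \cS$ form a sunflower with core $C$ of size $c \le t-1$. Fix any $Y \subset [n] \setminus V$ of size $t-1-c$; the goal is to select $F_i \in \ff$ with $F_i \cap V = A_i$, $Y \subset F_i$, and the residues $F_i \setminus (V \cup Y)$ pairwise disjoint, thereby producing a forbidden sunflower in $\ff$ with core $C \cup Y$ of size exactly $t-1$. Since $|V \cup Y| \le T + t - 1$ and $n$ is large, there is ample room for the disjoint extensions, provided each type class $\{F \in \ff : F \cap V = A\}$ is close to full, of size $(1-o(1))\binom{n-|V|}{k-|A|}$, for every $A \in \cS$. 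Securing this per-layer density is the main technical obstacle; the plan is to derive it inductively across the layers $\cS^{(t)}, \cS^{(t+1)}, \ldots, \cS^{(T)}$, combining iterated stability (Theorem~\ref{theorem: stability in Erdos-Duke}), the structural Theorem~\ref{theorem: structural duke-erdos}, and an extremality/switching argument that upgrades density to full type classes. The hypothesis $k \ge (t+T)(t-1)$ is used here to accommodate $V \cup Y$ together with the disjoint extensions inside each $F_i$.

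Once $\cS$ is sunflower-free, so is $\ff_\cS$: any sunflower $\{F_0, \ldots, F_{s-1}\} \subset \ff_\cS$ with core $D$ of size $t-1$ projects under $F \mapsto F \cap V$ to a sunflower $\{F_i \cap V\} \subset \cS$ with core $D \cap V$ of size at most $t-1$, contradicting property~\ref{item: sunflower freeness S*} --- the projections are pairwise distinct since $F_i \cap V = F_j \cap V$ would force $F_i \cap V \subset F_i \cap F_j = D$ and hence $|F_i \cap V| \le t-1$, contradicting $|F_i \cap V| \ge t$. The inclusion $\ff \subset \ff_\cS$ is immediate from the definition of $\cS$, so extremality of $\ff$ forces $\ff = \ff_\cS$.

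Finally, to verify the lex-max property~\ref{item: lexcicographic maximality S*}, apply Vandermonde to $\binom{n-|V|}{k-q} = \sum_{i \ge 0}\binom{T-|V|}{i}\binom{n-T}{k-q-i}$, sum $|\ff_\cS| = \sum_{q} |\cS^{(q)}|\binom{n-|V|}{k-q}$, and regroup by $m = (q-t)+i$ to obtain $|\ff_\cS| = \sum_{m \ge 0}\widetilde\phi_m(\cS)\binom{n-T}{k-t-m}$. Since $\binom{n-T}{k-t-m}$ is a polynomial in $n$ of degree $k-t-m$, strictly decreasing in $m$, lex-maximality of $\widetilde\phi(\cS)$ corresponds precisely to maximality of $|\ff_\cS|$ for all sufficiently large $n$. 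Extremality of $\ff$ therefore forces $\cS$ to satisfy property~\ref{item: lexcicographic maximality S*}, and setting $\cS_* := \cS$ completes the plan.
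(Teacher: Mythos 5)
Your proposal takes a genuinely different route from the paper, but it contains a gap precisely at the step you yourself flag as the ``main technical obstacle,'' and that gap is the entire difficulty of the theorem.

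The paper does not define $\cS$ as the full trace $\{F\cap V : F\in\ff\}$. Instead, its Lemma~\ref{lemma: induction over layers} builds $\cS$ layer by layer, keeping at level $t+i$ only those traces $S$ for which the type class $\ff(S,\support\cS^{(t)})$ is provably fat, and controlling the leftover via a Kruskal--Katona argument (Proposition~\ref{proposition: bounds on same uniformity remainder}) and a Cheeger/Johnson-graph isoperimetric argument at each level. The lex-maximality of $\widetilde\phi(\cS)$ is established inside that induction, not deduced afterward. The proof of Theorem~\ref{theorem: exact erdos-duke} then takes $q=T$ in that lemma and shows the remainder $\cR=\ff\setminus\ff_\cS$ is empty: each putative $F\in\cR$ lies in $\widetilde\cR$, hence completes a sunflower with $\cS$-members, and a counting comparison with $|\ff_*|$ gives a contradiction. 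Your argument instead declares $\cS$ to be the full trace and tries to prove sunflower-freeness of $\cS$ directly. For that to work, every $A\in\cS$ would have to admit a ``spread'' type class so the sunflower can be lifted into $\ff$, but nothing in your setup rules out thin traces: a priori there may be $A\in\cS$ with only a handful of preimages in $\ff$, and two such thin traces among $A_0,\dots,A_{s-1}$ would block the extension. Establishing that no such thin traces exist is exactly the content of the paper's Lemma~\ref{lemma: induction over layers}, and the phrase ``the plan is to derive it inductively \ldots combining iterated stability, the structural theorem, and an extremality/switching argument'' does not close the gap: Theorem~\ref{theorem: structural duke-erdos} only controls the $t$-th layer and gives no information about fatness of higher-layer traces, and iterated stability alone does not yield the per-layer spreadness you need.

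What you do have that is genuinely different and correct is the final lex-maximality step: you define $\cS$ first, and then observe (via the Vandermonde regrouping $|\ff_\cS|=\sum_m\widetilde\phi_m(\cS)\binom{n-T}{k-t-m}$) that once $\ff=\ff_\cS$ is extremal, lex-maximality of $\widetilde\phi(\cS)$ over all sunflower-free $\cS$ with $\cS\subset\bigcup_{q}\binom{\support\cS^{(t)}}{q}$ follows automatically, since there are finitely many candidate $\cS'$ and the degrees of the polynomials in $n$ are strictly decreasing in $m$. This is a clean a posteriori derivation of property~\ref{item: lexcicographic maximality S*} that the paper instead bakes into its inductive construction. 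But this nice observation does not compensate for the missing per-layer fatness argument, without which the core step ``$\cS$ is sunflower-free, hence $\ff_\cS$ is sunflower-free, hence $\ff=\ff_\cS$'' is not a proof. You would essentially need to reprove the paper's Lemma~\ref{lemma: induction over layers} before your strategy can go through.

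One smaller point: you invoke Theorem~\ref{theorem: structural duke-erdos} under the hypothesis $k\ge(t+T)(t-1)$, but that theorem requires $k\ge\max\{2t+1,\,2t(t-1)\}$. For $t=2$ you would need $k\ge 5$, while $(t+T)(t-1)=2+T$ can be smaller if $\phi(s,2)$ is tiny (e.g.\ $s=2$). This edge case should be acknowledged, though it is minor compared with the main gap above.
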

The proof of the theorem is presented in Section~\ref{section: proof of the exact duke-erdos}.

Let us comment on the property~\ref{item: lexcicographic maximality S*}. As a part of our analysis, we will prove that for any extremal family $\ff$ there is a family $\cS$ without a sunflower with $s$ petals and the core of size at most $t - 1$ such that $\ff$ is almost fully contained in $\ff_{\cS}$, see Lemma~\ref{lemma: induction over layers}. The representation
\begin{align*}
    |\ff_{\cS}| = \sum_{i = 0}^{T - t} |\cS^{(t + i)}| \binom{n - |\support \cS^{(t)}|}{k - t -  i},
\end{align*}
is not convenient for describing extremal families as $|\support \cS^{(t)}|$ may vary for different $\cS$. To overcome this obstacle, WLOG assume for a while that $\support \cS^{(t)} \subset [T]$ and define
\begin{align*}
    \cG^i_{\cS} = \left \{ G \in \binom{[T]}{t + i} \mid G \cap \support \cS^{(t)} \in \cS \right \}.
\end{align*}
Then, we clearly have $|\cG^i_{\cS}| = \widetilde{\phi}_{i}(\cS)$ and
\begin{align*}
    |\ff_{\cS}| = \sum_{i = 0}^{\min\{T - t, k - t\}} |\cG^i_{\cS}| \binom{n - T}{k - i - t} = \sum_{i = 0}^{\min\{T - t, k - t\}} \widetilde{\phi}_{i}(\cS) \binom{n - T}{k - i - t}.
\end{align*}
This yields two simple, but important corollaries. First, it implies that for any $\cS_*$ satisfying~\ref{item: sunflower freeness S*}-\ref{item: lexcicographic maximality S*}, $\ff_{*}$ defined by~\eqref{eq: extremal example refined} is an extremal family, provided $k \ge (t + T)(t - 1)$. Second, we have the following asymptotic expansion, which will be used in our analysis.

\begin{proposition}
\label{proposition: phi representation}
Fix some $q$, $t \le q \le \min\{T, k\}$. Let $\cS$ be a family satisfying~\ref{item: sunflower freeness S*}-\ref{item: lower levels emptiness of S*} with $\cS$ in place of $\cS_*$. Then, we have
\begin{align*}
    |\ff_\cS| = \sum_{i = 0}^{q - t} \widetilde{\phi}_i(\cS) \binom{n - T}{k - t - i} + O_{s,k}(n^{- 1}) \binom{n}{k - q}.
\end{align*}
\end{proposition}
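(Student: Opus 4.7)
The plan is to first establish the exact formula
\[
|\ff_\cS| \;=\; \sum_{i=0}^{\min\{T-t,\,k-t\}} \widetilde{\phi}_i(\cS)\,\binom{n-T}{k-t-i},
\]
which is essentially the identity already displayed in the paragraph preceding the proposition, and then to truncate the sum at $i=q-t$ and bound the discarded tail by $O_{s,k}(n^{-1})\binom{n}{k-q}$.

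For the exact identity, I would relabel the ground set so that $\support\cS^{(t)}\subset[T]$ (possible since $|\support\cS^{(t)}|\le T$, and does not affect $|\ff_\cS|$). For each $F\in\ff_\cS$, split $F=F'\sqcup F''$ with $F':=F\cap[T]$ and $F'':=F\setminus[T]$, and set $i=|F'|-t\in\{0,\ldots,\min\{T-t,k-t\}\}$. The defining condition $F\cap\support\cS^{(t)}\in\cS$ is equivalent to $F'\cap\support\cS^{(t)}\in\cS$, i.e.\ to $F'\in\cG^{\,i}_{\cS}$. Conversely, any pair $(F',F'')\in\cG^{\,i}_{\cS}\times\binom{[n]\setminus[T]}{k-t-i}$ yields a member of $\ff_\cS$, which gives $|\ff_\cS|=\sum_i|\cG^{\,i}_{\cS}|\binom{n-T}{k-t-i}$. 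The identity $|\cG^{\,i}_{\cS}|=\widetilde{\phi}_i(\cS)$ then follows by grouping $G\in\cG^{\,i}_{\cS}$ according to the size $t+j$ of $G\cap\support\cS^{(t)}\in\cS^{(t+j)}$ and noting that the remaining $i-j$ elements of $G$ lie in $[T]\setminus\support\cS^{(t)}$, a set of exactly $T-|\support\cS^{(t)}|$ elements; this reproduces the formula for $\widetilde{\phi}_i(\cS)$.

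To deduce the proposition, keep only the terms $i\in\{0,\ldots,q-t\}$ and estimate the tail. For each $i\ge q-t+1$ we have $\widetilde{\phi}_i(\cS)\le \binom{T}{t+i}=O_{s,k}(1)$, and $\binom{n-T}{k-t-i}\le\binom{n-T}{k-q-1}$ once $n$ is large. A routine estimate gives
\[
\binom{n-T}{k-q-1}\;=\;O_{s,k}\!\left(n^{-1}\right)\binom{n}{k-q},
\]
since the ratio of the two binomials is of order $(k-q)/(n-T-k+q+1)$ up to a constant depending only on $s,k$. Summing over the $O_{s,k}(1)$ discarded values of $i$ yields the claimed error term.

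There is no genuine obstacle here; the argument is a counting bijection followed by a routine binomial asymptotic. The one mild bookkeeping point is that $|\support\cS^{(t)}|$ may be strictly less than $T$, so $[T]\setminus\support\cS^{(t)}$ is nonempty and contributes the Vandermonde-type correction $\binom{T-|\support\cS^{(t)}|}{i-j}$ that is already built into the definition of $\widetilde{\phi}_i(\cS)$; this is precisely what lets one replace $\binom{n-|\support\cS^{(t)}|}{\cdot}$ by the cleaner $\binom{n-T}{\cdot}$ throughout the expansion.
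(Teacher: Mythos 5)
Your proof is correct and takes essentially the same approach the paper intends: re-derive the exact identity $|\ff_\cS|=\sum_{i}\widetilde{\phi}_i(\cS)\binom{n-T}{k-t-i}$ (already displayed in the paragraph preceding the proposition), then truncate the tail $i\ge q-t+1$ using $\widetilde{\phi}_i(\cS)\le\binom{T}{t+i}=O_{s,k}(1)$ and $\binom{n-T}{k-q-1}=O_{s,k}(1/n)\binom{n}{k-q}$. The bookkeeping about $[T]\setminus\support\cS^{(t)}$ matching the $\binom{T-|\support\cS^{(t)}|}{i-j}$ correction is exactly right.
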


We would like to mention that the reduction to the problem of maximizing $\widetilde{\phi}(\cdot)$ subject to a property such as avoiding a sunflower is a general phenomenon. In our upcoming paper~\cite{kupavskii_noskov_followup}, we will show that the same reduction can be obtained for a large class of Turan-type problems.

\section{Tools}
\label{section: tools}

We start this section by describing the isoperimetric inequality for expander graphs. Given a $d$-regular graph $G = (V, E)$ and a subset of its vertices $S$, we denote by $\partial S$ the set of edges $e$ such that $|e \cap S| = 1$, i.e., the edges $e$ that connect vertices of $S$ with vertices of $V \setminus S$. Define the Laplacian $L_G$ of the graph $G$ as $L_G = d \cdot I_{|V|} - A_G$, where $A_G$ stands for the adjacency matrix of $G$. Then, the following holds:

\begin{lemma}[Theorem 20.1.1 from~\cite{spielman2019spectral}]
\label{lemma: Cheeger inequality}
Let $\lambda_2(L_G)$ be the second smallest eigenvalue of $L_G$. Then, for any $S$ of size at most $|V|/2$, we have
\begin{align*}
    |\partial S| \ge \lambda_2(L_G) |S| /2.
\end{align*}
\end{lemma}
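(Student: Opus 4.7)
The plan is to use the variational (Rayleigh-quotient) characterization of the second-smallest eigenvalue of the Laplacian. Recall that $L_G$ is positive semidefinite, the constant vector $\mathbf{1}$ lies in its kernel (so $\lambda_1 = 0$), and
\[
\lambda_2(L_G) \;=\; \min_{\substack{x \in \RR^{V} \\ x \neq 0,\; x \perp \mathbf{1}}} \frac{x^\top L_G\, x}{x^\top x}.
\]
The key identity is the quadratic-form formula $x^\top L_G\, x = \sum_{\{u,v\} \in E} (x_u - x_v)^2$, which follows directly from $L_G = dI - A_G$ together with the fact that each edge contributes to exactly two diagonal entries and two off-diagonal entries of the matrix.

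The main step is to plug a cleverly chosen test vector into the Rayleigh quotient. First I would take the indicator $\mathbf{1}_S \in \RR^V$ of $S$ and observe that
\[
\mathbf{1}_S^\top L_G\, \mathbf{1}_S \;=\; \sum_{\{u,v\}\in E}\bigl((\mathbf{1}_S)_u - (\mathbf{1}_S)_v\bigr)^2 \;=\; |\partial S|,
\]
since the only edges contributing are precisely those with exactly one endpoint in $S$. To make the test vector orthogonal to $\mathbf{1}$, I would set
\[
y \;=\; \mathbf{1}_S \;-\; \tfrac{|S|}{|V|}\, \mathbf{1}.
\]
Because $\mathbf{1}$ lies in the kernel of $L_G$, shifting by a multiple of $\mathbf{1}$ does not affect the quadratic form, so $y^\top L_G\, y = |\partial S|$. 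A direct computation gives
\[
y^\top y \;=\; |S|\Bigl(1 - \tfrac{|S|}{|V|}\Bigr),
\]
and the assumption $|S| \le |V|/2$ yields $y^\top y \ge |S|/2$.

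Plugging $y$ into the Rayleigh-quotient bound gives $\lambda_2(L_G) \le \frac{|\partial S|}{|S|/2}$, which rearranges to the claimed inequality $|\partial S| \ge \lambda_2(L_G) \cdot |S|/2$. There is no serious obstacle: the only subtlety is ensuring the test vector is orthogonal to $\mathbf{1}$ (handled by the shift) and extracting the factor of $1/2$ (which is exactly where the hypothesis $|S| \le |V|/2$ enters, via $1 - |S|/|V| \ge 1/2$). The whole argument is a short, self-contained linear-algebra computation, which is presumably why the authors invoke it as a black box from \cite{spielman2019spectral}.
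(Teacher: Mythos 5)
Your proof is correct and is exactly the standard Rayleigh-quotient argument for this direction of Cheeger's inequality; the paper itself cites the result as a black box from Spielman's book, where essentially this same computation appears. One could streamline slightly by noting that since $L_G\mathbf{1}=0$, the Rayleigh bound $\lambda_2 \le \mathbf{1}_S^\top L_G\,\mathbf{1}_S / y^\top y$ holds directly with $y=\mathbf{1}_S-\tfrac{|S|}{|V|}\mathbf{1}$, but every step you wrote checks out, including the computation $y^\top y=|S|(1-|S|/|V|)\ge |S|/2$.
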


To prove Theorem~\ref{theorem: graph case Duke-Erdos extremal}, we need a characterisation of extremal examples for Problem~\ref{problem: erdos sunflower problem}. The following theorem was established by Abbot et al.~\cite{abbott1972intersection}. Although they did not explicitly state the uniqueness of extremal examples, it can be easily deduced from their proof.
\begin{theorem}
\label{theorem: sunflower graph case}
Let $G$ be a graph without a matching of size $s$ and a vertex of degree $s$. Then, 
\begin{align*}
    |E(G)| \le \begin{cases}
        s (s - 1), & \text{if } s \text{ is odd}, \\
        (s - 1)^2 + (s - 2)/2, & \text{if } s \text{ is even}.
    \end{cases}
\end{align*}
Assuming that $G$ does not contain isolated vertices, the graphs that attain the bound  are as follows. If $s$ is odd,  equality is attained if and only if $G$ is a disjoint union of two cliques of size $s$. If $s$ is even and $s > 2$,  equality is attained if and only if $G$ is a graph on $2s - 1$ vertices with degrees $d_1 = (s - 2), d_2 = \ldots = d_{2s - 1} = (s - 1)$. If $s = 2$,  equality is attained if and only if $G$ consists of a single edge.
\end{theorem}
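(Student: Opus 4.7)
The plan is to work with a maximum matching. Let $M$ be a maximum matching in $G$ with $m = |M|$; by hypothesis $m \le s - 1$ (no $s$ disjoint edges) and $\Delta(G) \le s - 1$ (no vertex of degree $s$). Set $A = V(M)$ and $I = V(G) \setminus A$; by maximality of $M$ the set $I$ is independent and $A$ is a vertex cover of size $2m$. The identity $|E(G)| = e(A) + e(A, I)$ combined with $\sum_{v \in A} d(v) = 2\,e(A) + e(A, I) \le 2m(s-1)$ immediately yields $|E(G)| \le 2m(s-1) - e(A)$, so the task reduces to producing a lower bound on $e(A)$ (or an upper bound on $e(A, I)$) strong enough to match the claimed bounds.

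The crucial additional ingredient is augmenting-path analysis. For a single matched edge $uv \in M$, if $x \in N(u) \cap I$ and $y \in N(v) \cap I$ with $x \ne y$, then $x$–$u$–$v$–$y$ is an augmenting path, contradicting maximality of $M$. So for every $uv \in M$ either one of $u, v$ has no neighbors in $I$ or else $N(u) \cap I = N(v) \cap I$ is a single common vertex. Iterating the argument along alternating paths of length $5, 7, \dots$ likewise rules out additional configurations. The idea is then to classify matched edges into a few types (both endpoints avoid $I$; exactly one endpoint meets $I$; both share a common $I$-neighbor) and to encode the augmenting-path restrictions as incompatibility constraints among types. Combined with the degree bound $\Delta \le s - 1$, a careful case analysis over the number of edges of each type yields $|E(G)| \le s(s-1)$ for odd $s$ and $|E(G)| \le (s-1)^2 + (s-2)/2$ for even $s$.

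For uniqueness I would trace equality back through each step. Equality forces $d(v) = s - 1$ for every $v \in A$, pins down the exact composition of matched-edge types, and imposes the sharp version of each augmenting-path constraint. For odd $s$ these rigidities collapse the picture into two vertex-disjoint copies of $K_s$: each $K_s$ contributes $\binom{s}{2}$ edges, with the unique unmatched vertex of each clique serving as a shared $I$-neighbor for the matched pairs inside that clique. For even $s$ a parity obstruction (which is precisely what forces the $+(s-2)/2$ correction) prevents a fully symmetric structure, so exactly one vertex must have degree $s - 2$; the remaining constraints then pin down the extremal graph on $2s-1$ vertices.

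The main obstacle I expect is the even-$s$ equality analysis: the extremal graph lacks the clean symmetry of the odd case, and the interaction between one-sided matched edges and common-neighbor matched edges produces multiple sub-cases, each of which must be checked to force the stated unique graph. A secondary difficulty is the regime $m \le s - 2$, where the crude bound $|E(G)| \le 2m(s-1)$ exceeds the target for large odd $s$, so one cannot rely on $m = s - 1$; instead I would combine the Erd\H os--Gallai matching bound with the degree constraint to rule out extremal candidates with a strictly smaller matching.
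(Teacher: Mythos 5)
The paper does not prove this theorem; it attributes it to Abbott, Hanson, and Sauer (1972), remarking only that the uniqueness of extremal graphs "can be easily deduced from their proof." So there is no in-paper argument for you to be compared against; I can only assess the proposal on its own terms.

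Taken on its own, what you have is a sketch, not a proof. The setup --- maximum matching $M$ of size $m \le s-1$, $A = V(M)$, $I = V \setminus A$ independent, $|E(G)| = \sum_{v \in A} d(v) - e(A) \le 2m(s-1) - e(A)$, and augmenting-path constraints on $e(A,I)$ --- is a sensible framework in the spirit of classical matching arguments. But the load-bearing step is deferred to "a careful case analysis over the number of edges of each type," and that is precisely where all the content of the theorem lives. The Type 2 edges (one endpoint with $I$-neighbors, the other without) can contribute up to $s-2$ cross edges apiece, so the immediate bound $e(A,I) \le m(s-1)$ overshoots badly; to close the gap you must exploit the length-$5,7,\dots$ augmenting paths to show the different edge types cannot all be maximal simultaneously, and you only gesture at this. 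You also correctly flag, without resolving, that when $m \le s-2$ the estimate $2m(s-1) - e(A)$ can exceed $s(s-1)$ for odd $s \ge 5$, so a separate argument is needed there; invoking Erd\H{o}s--Gallai does not by itself settle it, since that bound depends on $|V(G)|$ and one must show how it interacts with $\Delta \le s-1$. The uniqueness claims are similarly asserted rather than derived. In short, the right tools are named but the theorem is not demonstrated.
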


Finally, we need the Kruskal--Katona theorem in the Lov\'asz form.

\begin{theorem}[{\cite[Proposition 3.4]{frankl1987shifting}}]
\label{theorem: kruskal--katona theorem}
Let $\ff$ be a subfamily of $\binom{[n]}{k}$ such that $|\ff| = \binom{x}{k}$ for some real positive $x$. Then we have $|\partial_h \ff| \ge \binom{x}{h}$.
\end{theorem}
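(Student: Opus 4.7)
The plan is the classical compression-and-cascade proof of Kruskal--Katona, augmented by a monotonicity argument to pass from the integer-cascade form to the real-parameter form. First, I would reduce to the case when $\ff$ is an initial segment of the colex order on $\binom{[n]}{k}$. This proceeds by iterating $(i,j)$-shifts for $i<j$: an $(i,j)$-shift replaces $F\in\ff$ by $(F\setminus\{j\})\cup\{i\}$ whenever $j\in F$, $i\notin F$, and the shifted set is not already in $\ff$. Each such shift preserves $|\ff|$ and does not increase $|\partial_h\ff|$, since the new $h$-subsets that appear are images of old $h$-subsets under the same coordinate replacement, and can only identify previously distinct sets. A further, more refined round of compressions (replacing each ``link'' $\ff(\{j\})$ by the colex-initial family of the same size on the relevant ground set) turns the shifted family into a colex initial segment without increasing the shadow.

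Second, for an initial segment $\ff$ of colex with cascade representation $|\ff|=\binom{a_k}{k}+\binom{a_{k-1}}{k-1}+\cdots+\binom{a_s}{s}$ where $a_k>a_{k-1}>\cdots>a_s\ge s$, the shadow on layer $h$ can be computed exactly by induction on $k-h$, using the single-step identity
\[
|\partial_{k-1}\ff|=\binom{a_k}{k-1}+\binom{a_{k-1}}{k-2}+\cdots+\binom{a_s}{s-1}.
\]
This identity is proved by partitioning $\ff$ according to the largest element and applying the induction hypothesis to the link; iterating yields
\[
|\partial_h\ff|=\sum_{i=s}^{k}\binom{a_i}{h-(k-i)},
\]
with the convention that $\binom{m}{\ell}=0$ when $\ell<0$ or $\ell>m$.

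Third, I would upgrade the integer-cascade inequality to the real-$x$ form. Interpreting $\binom{y}{r}$ as the polynomial $y(y-1)\cdots(y-r+1)/r!$, the function $y\mapsto \binom{y}{h}/\binom{y}{k}$ is monotone decreasing on $[k,\infty)$ for fixed $h\le k$. Given $|\ff|=\binom{x}{k}$, this monotonicity, combined with the concavity of the map that sends a ``continuous cascade tail'' to its shadow, lets one replace the truncated integer cascade by the single continuous term $\binom{x}{h}$ and conclude $|\partial_h\ff|\ge\binom{x}{h}$.

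The main obstacle is this third step: cleanly passing from the integer-cascade Kruskal--Katona bound to the real-valued Lov\'asz form. It amounts to a technical convexity/monotonicity statement about the polynomials $\binom{y}{h}$ and $\binom{y}{k}$, slightly annoying to write out because of the cascade's termination and the edge cases where $x$ is close to $k$. The shifting reduction and the shadow computation for colex initial segments, by contrast, are now entirely standard in extremal set theory.
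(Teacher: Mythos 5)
The paper does not prove this statement; it simply cites Proposition~3.4 of Frankl's shifting survey, so there is no in-paper proof to compare against. The cited source proves the Lov\'asz form \emph{directly} by shifting and a double induction: make $\ff$ shifted, split along a fixed element, say $1$, into $\ff_0=\{F\in\ff:1\notin F\}$ and $\ff_1=\{F\setminus\{1\}:1\in F\in\ff\}$, observe $\partial_{k-1}\ff_0\subset\ff_1$ and $|\partial_{k-1}\ff|\ge|\ff_1|+|\partial_{k-2}\ff_1|$, set $|\ff_1|=\binom{y}{k-1}$, apply the induction hypotheses to get $|\partial_{k-1}\ff|\ge\binom{y+1}{k-1}$ and $|\ff|\le\binom{y+1}{k}$, and conclude $x\le y+1$. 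This avoids cascades entirely and yields the $h<k-1$ case by iteration.

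Your route --- compress to a colex initial segment, compute the exact cascade shadow, then upgrade to the real-parameter bound --- can be made to work, but two steps are substantially more than ``standard bookkeeping.'' First, $(i,j)$-shifts alone do not produce a colex initial segment; to get that you need colex (UV-)compressions, which are a different operation, and showing that those compressions do not increase the shadow is exactly where the hard work of the exact Kruskal--Katona theorem lives, not a ``further, more refined round'' of the same shifts. Second, and more seriously, step~3 is a genuine numerical lemma that you have gestured at but not proved. ``Monotonicity of $y\mapsto\binom{y}{h}/\binom{y}{k}$'' is a correct observation, but by itself it does not convert the cascade sum $\sum_i\binom{a_i}{h-(k-i)}$ into $\binom{x}{h}$, and ``concavity of the map that sends a continuous cascade tail to its shadow'' is not a well-posed statement. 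What one actually needs is, e.g., an induction on $k-h$ reducing to the one-step inequality: if $\binom{a_k}{k}+\binom{x'}{k-1}=\binom{x}{k}$ with $a_k\le x<a_k+1$ and $x'<a_k$, then $\binom{a_k}{h}+\binom{x'}{h-1}\ge\binom{x}{h}$. This is true and can be checked, but it requires a real argument; as written, your step~3 is a gap. Given that the direct Lov\'asz/Frankl induction proves exactly the real-parameter statement you want with none of this overhead, I would recommend that route instead.
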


\section{Proof of Theorem~\ref{theorem: stability in Erdos-Duke}}
\label{section: stability of ED -- proof}

Following Frankl and F\"uredi, we use the delta-system method. The original approach of Frankl and F\"uredi heavily relies on the following lemma (see Lemma 7.4 from~\cite{Frankl1987}).

\begin{lemma}
\label{lemma: delta-system method}
    Let $\ff \subset \binom{n}{k}$. Fix numbers $p, t$ such that $p \ge k \ge 2 t + 1$. Suppose that $\ff$ does not contain a sunflower with $p$ petals and the core of size $t - 1$. Then there exists a subfamily $\ff^* \subset \ff$ with the following properties:
    \begin{enumerate}
        \item for any $F \in \ff^*$, there exists a subset $T(F)$ of size $t$ such that each set $E$, $T \subset E \subsetneq F$, is a core of a sunflower with $p$ petals in $\ff^*$;
        \item $|\ff \setminus \ff^*| \le c(p, k) \binom{n}{k - t - 1}$,
    \end{enumerate}
    where $c(p, k)$ is some function that can be bounded by $p^{2^k} \cdot 2^{2^{Ck}}$ for some absolute constant $C$. 
\end{lemma}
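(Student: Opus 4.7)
The plan is to prove Lemma~\ref{lemma: delta-system method} via the classical delta-system method, using an iterative greedy removal of ``unhappy'' sets combined with a careful charging argument.

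Setup. For any subfamily $\cG \subseteq \ff$ and any $E \subseteq [n]$, I would call $E$ a $(p, \cG)$-kernel if $\cG(E)$ contains $p$ pairwise disjoint sets (equivalently, $E$ is the core of a sunflower with $p$ petals in $\cG$). Say $F \in \cG$ is \emph{happy in $\cG$} if there exists $T \in \binom{F}{t}$ such that every $E$ with $T \subseteq E \subsetneq F$ is a $(p, \cG)$-kernel; otherwise $F$ is \emph{unhappy}. Initialize $\ff^* := \ff$ and iteratively delete an arbitrary unhappy $F$ from $\ff^*$. Since each deletion decreases the cardinality, the procedure terminates, and property~(1) of the lemma then holds by construction.

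The core estimate I would use to bound $|\ff \setminus \ff^*|$ is: if $E$ has size $j$ and is not a $(p, \cG)$-kernel for some $\cG \subseteq \ff$, then $\cG(E)$ has matching number at most $p - 1$, so by a greedy matching argument it admits a vertex cover $C(E) \subseteq [n]$ of size at most $(p-1)(k-j)$; hence the number of $F \in \ff$ with $E \subseteq F$ is bounded by $(p-1)(k-j) \binom{n - j - 1}{k - j - 1}$, which for $j = t$ equals $O_{p,k}(n^{k-t-1})$. For the charging step, when $F$ is removed, every $T \in \binom{F}{t}$ is contained in some non-kernel $E_T \subsetneq F$ of size at least $t$ in the current family; I would select $E_T$ of minimum size over all $T$ and charge $F$ to this witness $W(F)$. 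Each removed $F$ is then reconstructible from $W(F)$ together with at most $k - |W(F)|$ additional elements hitting the vertex cover $C(W(F))$, so summing contributions over witnesses at levels $j = t, \ldots, k - 1$ should yield the required bound.

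The main obstacle I expect is that the $(p, \cdot)$-kernel property is not monotone under set removal: an $E$ that is a $(p, \ff)$-kernel can cease to be one in an intermediate family, so the set of ``active'' witnesses can grow during iteration, and the naive sum of contributions explodes to $O(n^{k-1})$ rather than the desired $O(n^{k - t - 1})$. To circumvent this I would process the removal top-down, level by level, from $j = k - 1$ down to $j = t$: at each level, remove all sets whose smallest current witness has size exactly $j$, recompute kernels, and then descend to level $j - 1$. A secondary induction on $k - j$ would bound the number of witnesses that can appear at level $j$, by showing that each removal at higher levels can only ``promote'' a bounded number of lower-level sets to non-kernel status. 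The dominant contribution to $|\ff \setminus \ff^*|$ then comes from level $j = t$ and equals $O_{p, k}(n^{k - t - 1})$, while the nested iteration over $k - t$ levels, with each level multiplying the bound by a factor that is itself exponential in $k$, is what produces the double-exponential constant $c(p, k) \le p^{2^k} \cdot 2^{2^{Ck}}$ stated in the lemma.
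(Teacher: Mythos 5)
The paper does not prove this lemma---it is cited verbatim as Lemma~7.4 of Frankl and F\"uredi~\cite{Frankl1987} and used as a black box---so there is no internal proof to compare against; I assess your proposal on its own.

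The overall shape of your plan (iteratively strip ``unhappy'' sets, charge each removal to a small non-kernel witness, and bound by a vertex-cover estimate for non-kernels) is the right flavor of the delta-system method, and the cover estimate you write down is correct. But there is a genuine gap exactly where you flag the difficulty, and the fix you sketch does not close it. When a set $F$ is deleted, the restrictions $\cG(E)$ shrink for \emph{every} $E \subseteq F$, that is, at all sizes $t \le |E| \le k - 1$, not only at sizes below the current level $j$. Deletions carried out while processing level $j - 1$ can therefore demote kernels of size $\ge j$ and render previously happy sets unhappy via a witness at a level you have already finished; a single top-down sweep need not terminate with property~(1), and once you allow re-ascent the bookkeeping collapses. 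The crucial assertion---that ``each removal at higher levels can only promote a bounded number of lower-level sets to non-kernel status''---is the entire content of the lemma and is stated without justification. It is also far from obvious: one deletion touches up to $2^k$ subsets $E$, and each newly demoted $E$ of size $j$ is a potential witness for $\Theta(n^{k-j-1})$ further sets, so nothing in the sketch rules out a cascade of size $\Theta(n^{k-1})$ rather than $O(n^{k-t-1})$. To obtain a real proof you would need either a construction of $\ff^*$ that is not a greedy deletion process (the Frankl--F\"uredi argument stabilizes the intersection structure through repeated applications of the Erd\H os--Rado sunflower lemma rather than by peeling off unhappy sets one at a time), or a carefully designed potential function that actually controls the cascade; neither is supplied here.
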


For the sake of completeness, in this section we present  the proof of the original result by Frankl and F\"uredi on the Duke--Erd\H os problem. The core idea of their proof will be used in the proof of our results. 

\begin{theorem}\cite{Frankl1987}
\label{theorem: delta-system solution}
    Let $k \ge 2t + 1$. Suppose that a family $\ff\subset{[n]\choose k}$ does not contain a sunflower with $s$ petals and the core of size $t - 1$. Then
    \begin{align*}
        |\ff| \le \phi(s, t) \binom{n}{k - t} + O_{s,k} \left ( \frac{1}{n} \right ) \binom{n}{k - t}.
    \end{align*}
\end{theorem}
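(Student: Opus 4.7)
The approach follows the delta-system method of Frankl and F\"uredi. Apply Lemma~\ref{lemma: delta-system method} with a parameter $p = p(s, k)$ chosen large enough to support the greedy selections described below (a polynomial-in-$s, k$ bound suffices). This produces a subfamily $\ff^* \subseteq \ff$ equipped with kernels $T \colon \ff^* \to \binom{[n]}{t}$, together with the exceptional bound
\[
|\ff \setminus \ff^*| \le c(p, k) \binom{n}{k - t - 1} = O_{s,k}(n^{-1}) \binom{n}{k - t}.
\]
Let $\cT = \{T(F) \mid F \in \ff^*\}$. Grouping $\ff^*$ by kernels gives $|\ff^*| \le |\cT| \cdot \binom{n - t}{k - t}$. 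Thus the theorem reduces to the claim $|\cT| \le \phi(s, t)$, and since $\cT \subseteq \binom{[n]}{t}$ it is enough to show that $\cT$ contains no sunflower with $s$ petals.

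Suppose towards a contradiction that $T_1, \ldots, T_s \in \cT$, with $T_i = T(F_i)$, form a sunflower with core $C$ of size $c \le t - 1$. I plan to build a sunflower $F_1', \ldots, F_s' \in \ff^* \subseteq \ff$ with core of size exactly $t - 1$, contradicting the hypothesis on $\ff$. The base case is $c = t - 1$: invoking the extendability clause with $E = T_i$ yields, for each $i$, a $p$-sunflower in $\ff^*$ with core $T_i$ and $p$ pairwise disjoint petals of size $k - t$. At the $i$-th greedy step, at most $O_{s,k}(1)$ of these petals can meet the previously chosen $F_j'$ ($j < i$) or the other kernels $T_j$; for $p$ larger than this budget we can select $F_i'$ making $F_i' \cap F_j' = T_i \cap T_j = C$ for all $j \ne i$, producing a sunflower with core of size exactly $t - 1$, the required contradiction.

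For $c < t - 1$ the one-pass greedy only yields a sunflower in $\ff^*$ with core of size $c$, which is permitted by the hypothesis, so further work is needed. The plan is to enlarge the common core by $t - 1 - c$ elements via iterated use of the extendability clause: for each $i$ we work with sets $E_i$ of size up to $t + (t - 1 - c) < k$ containing $T_i$, using the fact that each such $E_i$ is again a core of a $p$-sunflower in $\ff^*$, and we pigeonhole across these $s$ sunflowers to arrange that the chosen $E_i$'s themselves form a sunflower with core of size exactly $t - 1$; the base-case greedy applied to these $E_i$'s then completes the construction. The main obstacle I expect is the bookkeeping of this iterative enlargement, in particular arranging that the $s$ extensions agree on a common $(t - 1 - c)$-element set added to $C$ while keeping the petals pairwise disjoint; each round of pigeonhole costs a factor polynomial in $s, k$, so $p$ must be chosen correspondingly large, but still only as a function of $s, k$. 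Once this goes through, $|\cT| \le \phi(s, t)$ follows, and combining with the displayed bounds above together with $\binom{n - t}{k - t} \le \binom{n}{k - t}$ yields $|\ff| \le \phi(s, t) \binom{n}{k - t} + O_{s,k}(n^{-1}) \binom{n}{k - t}$, as required.
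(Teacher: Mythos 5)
Your reduction is correct up to and including the application of Lemma~\ref{lemma: delta-system method}, but you then group $\ff^*$ by the kernel $T(F)$ and reduce to the claim that $\cT=\{T(F):F\in\ff^*\}$ is a sunflower-free subfamily of $\binom{[n]}{t}$. The paper groups differently: it writes $|\ff^*|=\sum_{D\in\binom{[n]}{k-t}}|\ff_D|$ where $\ff_D=\{F\setminus D:F\in\ff^*,\ F\setminus D=T(F)\}$, and shows that each \emph{local} family $\ff_D$ is sunflower-free. That per-$D$ statement is strictly weaker than your global claim $|\cT|\le\phi(s,t)$, and it is the one the extendability clause of the lemma actually supports.

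The difference becomes essential exactly at the step you flag as the ``main obstacle.'' For $c<t-1$ you must enlarge the core by a $(t-1-c)$-element set $X$ that works for all $i$ simultaneously. But the extendability clause only lets you declare $T_i\cup X$ the core of a $p$-sunflower in $\ff^*$ when $T_i\cup X\subsetneq F_i$, i.e.\ when $X\subset F_i\setminus T_i$ for the specific $F_i$ witnessing $T_i\in\cT$. You therefore need $X\subset\bigl(\bigcap_i F_i\bigr)\setminus\bigcup_j T_j$ of size $t-1-c$, and in your setting the $F_i$ are arbitrary members of $\ff^*$ whose kernels happen to form a sunflower: there is no reason for $\bigcap_i F_i$ to contain anything beyond the core $C$, and the iterated pigeonhole you sketch does not produce such an $X$. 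The paper's partition by $D$ is exactly what manufactures it: inside a fibre $\ff_D$ one has $F_i=T_i\cup D$ for every $i$, so $D\subset\bigcap_i F_i$, $|D|=k-t\ge t-1$ (using $k\ge 2t+1$), and any $X\subset D$ of size $t-1-c$ satisfies $T_i\subset T_i\cup X\subsetneq F_i$ automatically; Claim~\ref{claim: no sk cores of sunflower} then finishes the job. This is not bookkeeping but the structural reason the decomposition is taken by petals rather than by kernels, and without it I do not see how your route to $|\cT|\le\phi(s,t)$ can be completed.
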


\begin{proof}
Applying Lemma~\ref{lemma: delta-system method} with $p = sk$, we infer that there exists $\ff^* \subset \ff$, $|\ff \setminus \ff^*| \le c(sk, k) \binom{n}{k - t - 1}$, such that for any $F \in \ff^*$ there is a subset $T \subset F$ of size $t$ such that any $E$, $T \subset E \subsetneq F$, is a core of a sunflower with $sk$ petals in $\ff^*$. For each $F \in \ff^*$, fix one such $T = T(F)$. Then, we have
\begin{align*}
    |\ff^*| = \sum_{D \in \binom{[n]}{k - t}} |\{F \in \ff^* \mid F \setminus D = T(F) \}|.
\end{align*}
Define $\ff_D = \{F\setminus D\mid F  \in \ff^*, F \setminus D = T(F) \}$. We claim that $\ff_D$ does not contain a sunflower with $s$ petals and arbitrary core. Otherwise, take such a sunflower $T_1, \ldots, T_s$. Choose $X \subset D$, $|X| = t - 1 - |\cap_i T_i|$. Each $T_i \cup X$ is the core of a sunflower in $\ff^*$ with $sk$ petals. Then the following claim applied to $(T_1 \cup X), \ldots, (T_s \cup X)$ leads to a contradiction.

\begin{claim}
\label{claim: no sk cores of sunflower}
Suppose that $X_1, \ldots, X_s$ form a sunflower, moreover, each $X_i$ is a core of a sunflower with $sk$ petals in $\ff^*$. Then there exist pairwise disjoint sets $F_i \in \ff^*(X_i), i = 1, \ldots, s$, that do not intersect $\bigcup_i X_i$. Consequently, $\ff^*$ contains a sunflower $F_1 \sqcup X_1, \ldots, F_s \sqcup X_s$ with the core $\cap X_i$.
\end{claim}
\begin{proof}
    Let $m$ be the maximum number such that there are disjoint $F_1 \in \ff^*(X_1), \ldots, F_m \in \ff^*(X_m)$ that do not intersect $\cup_{i = 1}^m X_i$. Since $\ff^*$ does not contain a sunflower with $s$ petals and the core of size $t - 1$, $m$ is strictly less than $s$. Consider a family $\ff^*(X_{m + 1})$. By assumption, it contains pairwise disjoint sets $G_1, \ldots, G_{sk}$. At least one of these sets does not intersect $\cup_{i = 1}^m (X_i \cup F_i)$ since $|\cup_{i = 1}^m (T_i \cup F_i)| \le (s - 1) k < sk$. Hence, we can choose $F_{m + 1} \in \ff^*(X_{m+1})$ that does not intersect $\cup_{i = 1}^{m + 1} X_i$, contradicting the maximality of $m$. 
\end{proof}

Therefore, $\ff_D \subset \binom{n}{t}$ does not contain a sunflower with $s$ petals, and so $|\ff_D| \le \phi(s, t)$. Since 
\begin{align}
    |\ff^*| \le \sum_{D \in \binom{[n]}{k - t}} |\ff_D|, \label{eq: bound via shadow}
\end{align}
we have
\begin{align*}
    |\ff| \le \phi(s,t) \binom{n}{k - t} + c(sk, k) \binom{n}{k - t - 1} \le \phi(s,t) \binom{n}{k - t} + \frac{k \cdot c(sk, k)}{n - k} \binom{n}{k - t}. & \qedhere
\end{align*}
\end{proof}

We are ready to prove Theorem~\ref{theorem: stability in Erdos-Duke}.

\begin{proof}[Proof of Theorem~\ref{theorem: stability in Erdos-Duke}]
Consider a family $\ff$ from the statement of the theorem. We have
\begin{align*}
    |\ff| \ge (1 - \delta) \phi(s, t) \binom{n}{k - t}.
\end{align*}
Let $\ff^*$ and $\{\ff_{D}\}_{D \in \binom{[n]}{k - t}}$ be the families obtained in the proof of Theorem~\ref{theorem: delta-system solution}. Since $|\ff \setminus \ff^*| \le O_{s,k} \left (n^{-1} \right ) \binom{n}{k - t}$, the bound~\eqref{eq: bound via shadow} implies
\begin{align*}
    \sum_{D \in \binom{[n]}{k - t}} |\ff_D| \ge |\ff^*| \ge |\ff| -  O_{s,k} (1/n) \cdot \binom{n}{k - t} \ge \left (1 - \delta - O_{s,k}(1/n) \right ) \phi(s, t)\binom{n}{k - t}.
\end{align*}
Define
\begin{align*}
    \cD = \left \{D \in \binom{[n]}{k - t} \mid |\ff_D| = \phi(s, t) \right \}.
\end{align*}
Then, we have
\begin{align*}
    \sum_{D \in \cD} |\ff_D| + \sum_{D \in \binom{[n]}{k - t} \setminus \cD} |\ff_D| \le \phi(s, t) |\cD| + (\phi(s, t)  -  1) \left (\binom{n}{k - t} - |\cD| \right ).
\end{align*}
It implies
\begin{align}
    |\cD| & \ge \left (1 - \delta - O_{s,k}(1/n) \right ) \phi(s, t) \binom{n}{k - t} - (\phi(s, t) - 1) \binom{n}{k - t} \nonumber \\
    & \ge (1 - \delta \cdot \phi(s,t) - O_{s,k}(1/n)) \binom{n}{k - t}. \label{eq: saturated families lower bound}
\end{align}

Recall the definition of the Johnson graph $J_{n, k - t}$. Its vertex set is  $\binom{[n]}{k - t}$, and two sets $D_1, D_2$ are connected by an edge iff $|D_1 \cap D_2| = k - t- 1$. We claim that if $D_1, D_2 \in \cD$ are adjacent in $J(n, k - t)$, then $\ff_{D_1}(D_1) = \ff_{D_2}(D_2)$. Indeed, if they are not equal, then $|\ff_{D_1}(D_1) \cup \ff_{D_2}(D_2)| \ge \min_{i = 1, 2}{|\ff_{D_i}|} + 1 \ge \phi(s, t) + 1$, where we used that $D_1, D_2 \in \cD$. Hence, $\ff_{D_1}(D_1) \cup \ff_{D_2}(D_2)$ contains a sunflower $T_1, \ldots, T_s$. Let $C = \cap_{i = 1}^s T_i$, $|C| \le t - 1$. Since $k\ge 2t+1$, we have
\begin{align*}
    |D_1 \cap D_2|  = k - t - 1\ge t - 1.
\end{align*}
Thus, there exists a set $M \subset D_1 \cap D_2$ of size $t - 1  - |C|$. By the construction of $\ff^*$, each $T_1 \cup M, \ldots, T_s \cup M$ is a center of a sunflower with $sk$ petals and the core of size exactly $t - 1$. By Claim~\ref{claim: no sk cores of sunflower}, it implies that $\ff^*$  contains a sunflower $F_1, \ldots, F_s$, $T_i \cup M \subset F_i$ with $s$ petals and the core of size $t - 1$, a contradiction.

It means that for any connected component $\cS$ in the subgraph $J_{n, k - t}[\cD]$ of $J_{n, k - t}$ induced on $\cD$ there exists a family $\cT$ such that $\ff_D = \cT$ for all $D \in \cS$. Using a simple isoperimetric argument, we will prove that the induced subgraph of $\cD$ is almost fully covered by some giant connected component.  

First, let us describe the spectrum of the adjacency matrix of $J_{n, k - t}$. The eigenvalues of $J_{n, k - t}$ are $\{(k - t - j)(n - k + t- j) - j\}_{j = 0}^{\diam{J(n, k - t)}}$, see Section 14.4.2 of~\cite{brouwer2011spectra}. Since $n \ge 2 (k - t)$, the diameter of the graph is  $k - t$. The spectral gap, that is, the difference between the largest and the second largest eigenvalue of the adjacency matrix, is
\begin{align}
    & (k - t) (n - k + t) - (k - t - 1)(n - k + t - 1) + 1 = \nonumber \\
    & \quad (k - t) (n - k + t) - (k - t) (n - k + t) + (k - t ) + (n - k + t) - 1 + 1 = \nonumber \\
    & \quad (k - t) + (n - k + t) = n. \label{eq: Johnson graph spectral gap}
\end{align}
Since the Johnson graph is regular, the spectral gap of the adjacency matrix is equal to the second eigenvalue of the Laplacian.

Next, let $\bbC[\cD]$ be the set of connected components of $J_{n, k - t}[\cD]$. Let $\cS_0$ be the largest connected component of $J_{n, k - t}[\cD]$. Then, for any other connected component $\cS \in \bbC[\cD] \setminus \{\cS_0\}$, we have the following two properties.
\begin{enumerate}
    \item The size of $\cS$ does not exceed $\frac 12{n\choose k-t}$. Otherwise, $|\cS| + |\cS_0| \ge 2 |\cS| > \binom{n}{k - t}$, a contradiction.
    \item The edges of $\partial \cS$  in $J_{n, k - t}$ are incident to vertices in  $\binom{[n]}{k - t} \setminus \cD$.
\end{enumerate}
Applying Lemma~\ref{lemma: Cheeger inequality} and denoting the maximal degree of $J_{n, k - t}$ by $\Delta(J_{n, k - t})$, we get
\begin{align*}
    \frac{n}{2} |\cD \setminus \cS_0| = \frac{n}{2} \sum_{\cS \in \bbC[\cD] \setminus \{\cS_0\}} |\cS| \le \sum_{\cS \in \bbC[\cS] \setminus \{\cS_0\}} |\partial \cS| \le \Delta(J_{n, k - t}) \cdot \left |\binom{[n]}{k - t} \setminus \cD \right |.
\end{align*}
Using $\Delta(J_{n, k - t}) = (k - t) \cdot (n - k + t)$ and~\eqref{eq: saturated families lower bound}, we get
\begin{align*}
    |\cD \setminus \cS_0| \le 2 k \cdot \left (\delta \cdot \phi(s, t) + O_{s,k}(1/n) \right ) \cdot \binom{n}{k - t}.
\end{align*}

Let $\cT_0 \subset \binom{[n]}{t}$ be the family such that $\ff_D(D) = \cT_0$ for all $D \in \cS_0$. Then, we have
\begin{align*}
    |\ff \setminus \ff[\cT_0]| & \le \sum_{D \in \binom{[n]}{k - t}  \setminus \cS_0} |\ff_D| \le \phi(s, t) \cdot \left | \binom{n}{k - t}  \setminus \cS_0 \right | \\
    & \le \phi(s, t) \cdot \left ( \left | \binom{n}{k - t} \setminus \cD \right | + |\cD \setminus \cS_0| \right ) \\
    & \le \left ( 3k \phi^2(s, t) \delta + O_{s,k}(1/n) \right ) \binom{n}{k - t} = \left ( 3k \phi^2(s, t) \delta + O_{s,k}(1/n) \right ) \binom{n - t}{k - t}
\end{align*}
for sufficiently large $n$.
\end{proof}

\section{Proof of Theorem~\ref{theorem: structural duke-erdos}}
\label{section: proof of structural duke-erdos}

We start with the following corollary of Theorem~\ref{theorem: stability in Erdos-Duke} and Example~\ref{example: basic example}.

\begin{corollary}
\label{corollary: corollary of the stability}
Let $k \ge 2t + 1$. Let $\ff$ be an extremal family that does not contain a sunflower with $s$ petals and the core of size $t - 1$. Then, there exists a family $\cT \subset \binom{[n]}{t}$ that does not contain a sunflower with $s$ petals such that $|\cT| = \phi(s, t)$ and
\begin{align*}
    |\ff \setminus \ff[\cT]| \le \frac{C(s, k)}{n} \binom{n - t}{k - t}
\end{align*}
for some constant $C(s, k)$ depending on $s, k$ only and $n$ sufficiently large as a function of $s, k$.
\end{corollary}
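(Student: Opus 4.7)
The plan is to apply Theorem~\ref{theorem: stability in Erdos-Duke} with a carefully chosen value of $\delta$ obtained by lower-bounding $|\ff|$ via Example~\ref{example: basic example}.

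First, I would observe that since $\ff$ is extremal, its size is at least as large as that of the construction in Example~\ref{example: basic example}. Setting $T = t\phi(s,t)$ and choosing any $t$-uniform sunflower-free family $\cT_0$ of size $\phi(s,t)$ on the ground set $[n]$, the family $\{F \in \binom{[n]}{k} : F \cap \support \cT_0 \in \cT_0\}$ has size $\phi(s,t)\binom{n-T}{k-t}$. Hence
\[
|\ff| \ge \phi(s,t)\binom{n-T}{k-t}.
\]

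Next, I would compare $\binom{n-T}{k-t}$ to $\binom{n}{k-t}$. A direct computation gives
\[
\frac{\binom{n-T}{k-t}}{\binom{n}{k-t}} = \prod_{i=0}^{k-t-1}\frac{n-T-i}{n-i} \ge 1 - \frac{(k-t)T}{n-k+t+1} = 1 - O_{s,k}(1/n).
\]
Therefore $|\ff| \ge (1-\delta)\phi(s,t)\binom{n}{k-t}$ with $\delta = O_{s,k}(1/n)$, and so the hypothesis of Theorem~\ref{theorem: stability in Erdos-Duke} is satisfied for $n$ sufficiently large as a function of $s,k$.

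Finally, applying Theorem~\ref{theorem: stability in Erdos-Duke} yields a family $\cT \subset \binom{[n]}{t}$ without an $s$-sunflower such that
\[
|\ff \setminus \ff[\cT]| \le \left(3k\phi^2(s,t)\cdot \delta + \frac{C(s,k)}{n}\right)\phi(s,t)\binom{n-t}{k-t}.
\]
Since $\delta = O_{s,k}(1/n)$ and $\phi(s,t)$ depends only on $s,t$, the whole prefactor is $O_{s,k}(1/n)$, giving the desired bound after absorbing all $s,k$-dependent constants into a new constant $C(s,k)$. It only remains to guarantee that $|\cT| = \phi(s,t)$; this can be arranged at no cost by enlarging $\cT$ to any maximum-size sunflower-free $t$-uniform family containing it (the conclusion $|\ff\setminus \ff[\cT]|$ only gets smaller when $\cT$ grows). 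There is no real obstacle here, as the corollary is essentially the combination of the extremal lower bound from Example~\ref{example: basic example} (which forces $\delta$ to be $O_{s,k}(1/n)$) with the stability theorem; the mild bookkeeping concerns the $\binom{n-T}{k-t}$ vs.\ $\binom{n}{k-t}$ comparison and the guarantee $|\cT|=\phi(s,t)$.
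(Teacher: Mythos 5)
Your lower bound on $|\ff|$ via Example~\ref{example: basic example} and the reduction to $\delta = O_{s,k}(1/n)$ match the paper's argument, and the application of Theorem~\ref{theorem: stability in Erdos-Duke} is correct. The gap is in your final step: you claim you can guarantee $|\cT| = \phi(s,t)$ by ``enlarging $\cT$ to any maximum-size sunflower-free $t$-uniform family containing it.'' This is not always possible: a sunflower-free $t$-uniform family need not extend to one of maximum size $\phi(s,t)$. For instance, with $t=2$ and $s=3$ (so $\phi(3,2)=6$, attained only by a disjoint union of two triangles), the family $\{12,34\}$ is a maximal-at-size-4 example that cannot be grown to $6$ edges without creating a matching of size $3$ or a vertex of degree $3$. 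So the existence of the enlargement is exactly what needs proof, and your parenthetical observation that $|\ff\setminus\ff[\cT]|$ only shrinks as $\cT$ grows does not rescue this.

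The correct way to close this, and the route the paper takes, is to deduce $|\cT|=\phi(s,t)$ from the size bounds you have already established rather than by modifying $\cT$. Since $|\ff[\cT]| \le |\cT|\binom{n-t}{k-t}$ and your bound gives $|\ff[\cT]| \ge |\ff| - O_{s,k}(1/n)\binom{n-t}{k-t} \ge (1-O_{s,k}(1/n))\phi(s,t)\binom{n-t}{k-t}$, it follows that $|\cT| \ge (1-O_{s,k}(1/n))\phi(s,t)$; combined with $|\cT|\le\phi(s,t)$ (as $\cT$ is $t$-uniform and sunflower-free), this forces $|\cT|=\phi(s,t)$ for $n$ large. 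With this substitution your proof agrees with the paper's.
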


\begin{proof}
    Let $\ff_1$ be a family from Example~\ref{example: basic example} for some $\cT_1$, $|\cT_1| = \phi(s, t)$. Then, we have
    \begin{align*}
        |\ff_1| & \ge \phi(s, t) \binom{n - |\support \cT_1|}{k - t} \ge \phi(s, t) \left (\binom{n}{k - t} - |\support \cT_1| \binom{n - 1}{k - t - 1} \right )  \\
        & \ge \phi(s, t) \left ( 1 - \frac{k t \phi(s, t)}{n} \right ) \binom{n}{k - t},
    \end{align*}
    where we used $|\support \cT_1| \le t \phi(s,t )$. Hence, an extremal family $\ff$ satisfies
    \begin{align*}
        |\ff| \ge |\ff_1|\ge (1 - \delta) \phi(s, t) \binom{n}{k - t}
    \end{align*}
    for $\delta = tk\phi(s, t)/n$. Due to Theorem~\ref{theorem: stability in Erdos-Duke}, there exists a family $\cT \subset \binom{[n]}{t}$ that does not contain a sunflower with $s$ petals such that
    \begin{align}
       \label{eqboundonrem} |\ff \setminus \ff[\cT]| \le \left ( \frac{tk^2 \phi(s, t)^3}{n} + \frac{C(s, k)}{n} \right ) \binom{n - t}{k - t} \le \frac{C'}{n} \binom{n - t}{k - t}
    \end{align}
    for some constant $C'=C'(s, k)$. Since $|\ff| \ge (1 - O_{s,k}(1/n)) \phi(s, t) \binom{n - t}{k - t}$, we have $|\cT| = \phi(s, t)$.
\end{proof}

To prove Theorem~\ref{theorem: structural duke-erdos}, we use the following claim.

\begin{claim}
\label{claim: each edge is fat}
Let $k \ge 2t + 1$. Let $\ff$ be an extremal family that does not contain a sunflower with $s$ petals and the core of size $t - 1$. Let $\cT$ be the family from Corollary~\ref{corollary: corollary of the stability}. Then, uniformly over $T \in \cT$, we have
\begin{align*}
    |\ff(T)| = \left ( 1- O_{s,k} (1/n) \right ) \binom{n - t}{k - t}.
\end{align*}
\end{claim}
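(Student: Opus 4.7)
The plan is to bracket $|\ff(T)|$ from both sides. The trivial upper bound $|\ff(T)| \le \binom{n-t}{k-t}$ is immediate since $\ff(T)$ consists of $(k-t)$-subsets of $[n]\setminus T$. The real content is the matching lower bound, which I would obtain by combining a lower bound on $|\ff|$ (coming from the extremality of $\ff$ together with the example from Example~\ref{example: basic example}) with the upper bound on $|\ff \setminus \ff[\cT]|$ provided by Corollary~\ref{corollary: corollary of the stability}, and then by exploiting the union bound $|\ff[\cT]| \le \sum_{T \in \cT} |\ff(T)|$.

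First I would lower-bound $|\ff|$. Since $\ff$ is extremal and Example~\ref{example: basic example} produces a sunflower-free family of size $\phi(s,t)\binom{n-|\support\cT'|}{k-t} \ge \phi(s,t)\binom{n-T}{k-t}$ (where $T=t\phi(s,t)$), we have $|\ff| \ge \phi(s,t)\binom{n-T}{k-t}$. A routine expansion gives $\binom{n-T}{k-t} = (1-O_{s,k}(1/n))\binom{n-t}{k-t}$, so
\begin{align*}
    |\ff| \ge \phi(s,t)\bigl(1-O_{s,k}(1/n)\bigr)\binom{n-t}{k-t}.
\end{align*}
Combining this with $|\ff \setminus \ff[\cT]| \le (C(s,k)/n)\binom{n-t}{k-t}$ from Corollary~\ref{corollary: corollary of the stability} yields
\begin{align*}
    |\ff[\cT]| \ge \phi(s,t)\binom{n-t}{k-t} - O_{s,k}(1/n)\binom{n-t}{k-t}.
\end{align*}

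Now for the key step: every $F \in \ff[\cT]$ contains at least one $T \in \cT$, so
\begin{align*}
    |\ff[\cT]| \le \sum_{T \in \cT} |\ff(T)|.
\end{align*}
Fix $T_0 \in \cT$ and write $|\ff(T_0)| = \binom{n-t}{k-t} - \Delta$. Bounding every other term by the trivial $\binom{n-t}{k-t}$ and using $|\cT| = \phi(s,t)$, we get $\sum_{T \in \cT}|\ff(T)| \le \phi(s,t)\binom{n-t}{k-t} - \Delta$. Comparing with the lower bound on $|\ff[\cT]|$ forces $\Delta \le O_{s,k}(1/n)\binom{n-t}{k-t}$, which is exactly the claimed bound, uniformly over $T_0 \in \cT$.

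I do not foresee a genuine obstacle, since the estimates are all black-box consequences of Example~\ref{example: basic example} and Corollary~\ref{corollary: corollary of the stability}; the only slightly delicate point is keeping the binomial ratios $\binom{n-T}{k-t}/\binom{n-t}{k-t}$ under control so that all the losses merge cleanly into a single $O_{s,k}(1/n)$ term. The mildly surprising feature of the argument is that the union bound, which is typically lossy when $\cT$ contains overlaps, is in fact tight up to lower-order terms here, precisely because $\ff$ is forced to saturate each $T \in \cT$ down to an $O_{s,k}(1/n)$ deficit in order to match the extremal count.
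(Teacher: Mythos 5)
Your proof is correct and takes essentially the same route as the paper: lower-bound $|\ff|$ via Example~\ref{example: basic example} and extremality, pass to $|\ff[\cT]|$ using Corollary~\ref{corollary: corollary of the stability}, and then bound $|\ff[\cT]| \le (|\cT|-1)\binom{n-t}{k-t} + |\ff(T)|$ by singling out the term at $T$. The paper simply chains these inequalities in a single display rather than introducing the slack variable $\Delta$, but the underlying argument is identical.
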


\begin{proof}
 Fix some $T \in \cT$. Then, we have
    \begin{align*}
        \phi(s, t) \left (1 - \frac{t k \phi(s, t)}{n} \right ) \binom{n - t}{k - t} \le |\ff| & \overset{\eqref{eqboundonrem}}{\le} |\ff[\cT]| + \frac{C'}{n} \binom{n - t}{k - t} \\
        & \le (|\cT| - 1) \binom{n - t}{k - t} + |\ff(T)| + \frac{C'}{n} \binom{n - t}{k - t} \\
        & \le (\phi(s, t) - 1) \binom{n - t}{k - t} + |\ff(T)| + \frac{C'}{n} \binom{n - t}{k - t}.
    \end{align*}
    Hence, we have
    \begin{align*}
        |\ff(T)| \ge \left ( 1 - \frac{tk \phi^2(s,t)}{n} - \frac{C'}{n} \right ) \binom{n - t}{k - t} = \left (1 - O_{s,k}(1/n) \right ) \binom{n -t }{k - t}.
    \end{align*}
    Also, trivially, $|\ff(T)| \le \binom{n - t}{k - t}$. The claim follows.
\end{proof}

The following lemma is the key technical ingredient in the proof of our main results. 

\begin{lemma}
\label{lemma: induction over layers}
Take $k \ge 2t + 1$. Let $\ff \subset \binom{[n]}{k}$ be an extremal family without a sunflower with $s$ petals and the core of size $t - 1$.  Fix $q \ge t$ such that $k \ge (q + 1) (t - 1)$. Then, there exists a family $\cS \subset \bigcup_{j = t}^q \binom{[n]}{j}$ with the following properties:
\begin{enumerate}
    \item \label{item: induction sunflower restriction} $\cS$ does not contain a sunflower with $s$ petals and the core of size at most $t - 1$;
    \item \label{item: fat restrictions over S} for any $S \in \cS$, we have
    \begin{align*}
        |\ff(S, \support \cS^{(t)})| = (1 - O_{s,k}(1/n)) \binom{n}{k - |S|};
    \end{align*}
    \item \label{item: induction support restriction} $\cS \subset 2^{\support \cS^{(t)}}$;
     \item \label{item: induction max lexicographical order} the vector $(\widetilde{\phi}_0(\cS), \ldots, \widetilde{\phi}_{\min\{q , T\} - t}(\cS))$ is maximal in the lexicographical order over all choices of $\cS$ satisfying~\ref{item: induction sunflower restriction},\ref{item: induction support restriction};
\item \label{item: remainder bound} For the family
\begin{align*}
    \ff_{\cS} = \left \{ F \in \binom{[n]}{k} \mid F \cap \support \cS^{(t)} \in \cS \right \}
\end{align*}
we have
\begin{align}
\label{eq: q-th remainder bound}
    |\ff \setminus \ff_{\cS}| = O_{s,k}(n^{- (q - t) - 1}) \cdot \binom{n - t}{k - t}.
\end{align}
Moreover, the family
\begin{align*}
    \cR =\{F \in \ff \mid |F \cap \support \cS^{(t)}| \le q \text{ and } F \cap \support \cS^{(t)} \not \in \cS \}
\end{align*}
is a subfamily of 
\begin{align*}
    \widetilde{\cR} = \{F \in \ff \mid & \exists S_1, \ldots, S_{s - 1} \in \cS \text{ such that}  \\
    & \text{$F, S_1, \ldots, S_{s-1}$ form}  \text{ a sunflower with $s$ petals} \\
    & \text{and the core of size at most $t - 1$} \},
\end{align*}
which has size $O_{s,k} \left ( n^{- \frac{k - t (t - 1)}{t - 1}} \right ) \binom{n - t}{k - t}$.
\end{enumerate}
\end{lemma}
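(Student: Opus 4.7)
My plan is to prove the lemma by induction on $q$, starting from $q = t$. For the base case, I take $\cS := \cT$ where $\cT$ is the family supplied by Corollary~\ref{corollary: corollary of the stability}. Properties 1, 3 and 4 are immediate (in particular $\widetilde{\phi}_0(\cS) = |\cT| = \phi(s,t)$ is the maximum over sunflower-free $t$-uniform families), property 2 is exactly Claim~\ref{claim: each edge is fat}, and property 5a follows by splitting $\ff \setminus \ff_\cS$ into the ``big-intersection'' part $\{F \in \ff : |F \cap \support \cT| \ge t+1\}$, bounded by $\binom{T}{t+1} \binom{n}{k-t-1} = O_{s,k}(1/n) \binom{n-t}{k-t}$, and the residual $\cR$. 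To obtain $\cR \subseteq \widetilde{\cR}$ for 5b, I use that $|\cT| = \phi(s,t)$ forces $\cT \cup \{R\}$, with $R := F \cap \support \cT$, to contain a sunflower necessarily using $R$ as one petal; since all petals lie inside $\support \cT$, for the remaining petals $T_1, \ldots, T_{s-1}$ and core $C$ one has $F \cap T_i = R \cap T_i = C$, placing $F$ in $\widetilde{\cR}$. The size bound on $|\widetilde{\cR}|$ follows from a sunflower-extension argument: each $F \in \widetilde{\cR}$ could, by property 2, be completed into a genuine $s$-petal sunflower in $\ff$ by greedily extending each $T_i$ to $F_i \in \ff$, all pairwise disjoint outside $\support \cT$ and from $F$; only a small number of boundary configurations survive, giving the announced rate.

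For the inductive step $q \to q+1$ (with $k \ge (q+2)(t-1)$), I define $\cS_{q+1} := \cS_q \cup \cS_{q+1}^{(q+1)}$, where $\cS_{q+1}^{(q+1)}$ is the collection of ``popular'' $(q+1)$-subsets $R \subseteq \support \cS_q^{(t)}$ satisfying $|\ff(R, \support \cS_q^{(t)})| \ge (1 - C_{q+1}/n) \binom{n - T}{k - q - 1}$ for a sufficiently large constant $C_{q+1} = C_{q+1}(s,k)$. Properties 2 and 3 are then immediate (property 2 for new $R$ by definition, for old $S$ by induction; property 3 is preserved since $\support \cS_{q+1}^{(t)} = \support \cS_q^{(t)}$). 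Property 1 follows from the same greedy sunflower-extension argument: if a popular $R \in \cS_{q+1}^{(q+1)}$ together with $S_1, \ldots, S_{s-1} \in \cS_{q+1}$ formed a sunflower with core $C'$ of size $\le t - 1$, popularity of $R$ lets me pick $F \in \ff$ with $F \cap \support \cS^{(t)} = R$, and property 2 lets me pick $F_i \in \ff$ extending each $S_i$, all pairwise disjoint outside $\support \cS^{(t)}$ (easy since $n \gg sk$); then $F, F_1, \ldots, F_{s-1}$ would form a sunflower in $\ff$ with core $C'$, contradicting sunflower-freeness.

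Property 4 (lex maximality) is the delicate step: the claim is that every $(q+1)$-subset $R^*$ of $\support \cS^{(t)}$ that can be added to $\cS_q$ without creating a sunflower is automatically popular, so that our popular sets realize the lex max. If some addable $R^*$ were not popular, then $\ff_{\cS_q \cup \{R^*\}}$ would be a sunflower-free family of size $|\ff_{\cS_q}| + \binom{n - T}{k - q - 1}$; extremality of $\ff$ would give $|\ff| \ge |\ff_{\cS_q \cup \{R^*\}}|$, and combined with the inductive remainder bound on $|\ff \setminus \ff_{\cS_q}|$ (of matching order $\binom{n - T}{k - q - 1}$) together with the non-popular deficit of $R^*$, a shifting-type exchange trading missing popular extensions of $R^*$ against sets in $\ff \setminus \ff_{\cS_q}$ would produce a sunflower-free family strictly larger than $\ff$, a contradiction. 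With this equivalence in hand, property 5a follows by decomposing the remainder as in the base case, and property 5b follows as well: for $F \in \cR$ with $R := F \cap \support \cS^{(t)} \notin \cS_{q+1}$, if $|R| \le q$ the inductive hypothesis applies (and $\widetilde{\cR}_q \subseteq \widetilde{\cR}_{q+1}$), while if $|R| = q+1$ then $R$ is non-popular, hence non-addable by the equivalence, so $R$ creates a sunflower with $\cS_{q+1}$, placing $F$ in $\widetilde{\cR}$.

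The main obstacle is the ``addable $\Rightarrow$ popular'' implication underlying property 4. This is the one step that genuinely invokes the extremality of $\ff$ rather than merely its near-extremality, and it requires the constant $C_{q+1}(s,k)$ in the popularity threshold to be calibrated so that a single non-popular $R^*$ has deficit strictly exceeding the slack allowed by property 5a from the preceding step. Threading these constants consistently across all levels of the induction, together with the precise bookkeeping needed to bound $|\widetilde{\cR}|$ at the announced rate $O_{s,k}(n^{-(k - t(t-1))/(t-1)}) \binom{n - t}{k - t}$, constitutes the technical crux of the proof.
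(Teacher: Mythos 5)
Your proposal correctly reuses the base case (Corollary~\ref{corollary: corollary of the stability} plus Claim~\ref{claim: each edge is fat}) and the overall inductive skeleton of the paper, but the inductive step diverges and has a genuine gap.

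The central problem is your treatment of property~\ref{item: induction max lexicographical order}. You define $\cS_{q+1}^{(q+1)}$ as the collection of all ``popular'' $(q+1)$-subsets and then try to close the argument with the implication ``addable $\Rightarrow$ popular,'' invoking a ``shifting-type exchange.'' This implication is not supplied by your sketch and is the crux of the whole lemma. The exchange you describe does not close: by the inductive remainder bound, $|\ff\setminus \ff_{\cS_q}|$ is of order $n^{-(q-t)-1}\binom{n-t}{k-t}\sim n^{k-q-1}$, which is exactly the same order as the gain $\binom{n-|\support\cS^{(t)}|}{k-q-1}$ you hope to trade against, so merely comparing sizes does not yield a contradiction. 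The paper's Step~6 resolves this only because the error terms are tracked to $O_{s,k}(n^{-i-1})\binom{n}{k-t}$ with the exact main term $\sum_r \widetilde{\phi}_r\binom{n-T}{k-t-r}$, and the proof then still needs a further idea: after establishing the correct value of $\phi_i$, the paper (Step~7) reuses the Johnson-graph isoperimetry from Theorem~\ref{theorem: stability in Erdos-Duke} to show that for almost all $D$ the local family $\ff^*_i(D)$ is \emph{the same} maximum addable family $\cT_i$. This consensus step is what lets the construction identify a single addable family of size $\phi_i$ seen by a giant component of $D$'s, and it is entirely absent from your proposal. Without it, ``the set of all popular $(q+1)$-sets has size $\phi_{q+1-t}$'' is not established — distinct maximum addable extensions of $\cS_q$ may exist, and the extremal $\ff$ need not make every addable set popular.

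A second, smaller issue: your verification of property~\ref{item: induction sunflower restriction} for new popular sets ignores the padding that is needed when the hypothetical sunflower among $\cS_{q+1}$ has a core of size strictly less than $t-1$. To complete a forbidden sunflower in $\ff$ one must extend the core by a set $X$ outside $\support\cS^{(t)}$ and then know that all the restricted degrees at $S_j\cup X$ are still large. Your popularity condition only controls the degree at $S_j$ itself, not at $S_j\cup X$; the paper handles this with the stronger definitions of $\cD^*_{i,S}$ and $\cD_{i-1}$ (degree conditions for all small $X$) together with the double-counting argument of Proposition~\ref{proposition: k-t-i fatness}. This part is fixable by inserting such a double-counting step, but as written the argument is incomplete.

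In short, the proposal correctly identifies the threshold-based ``popular sets'' picture and the base case, but it bypasses the Johnson-graph isoperimetric argument in the inductive step — which is precisely the tool the paper uses to prove that a single addable family of the right size is realized by almost all local restrictions — and the substitute argument for lex maximality is too coarse to close the count.
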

We note that the support of $\cS$ coincides with the support of $\cS^{(t)}$ and has size at most $T$ and is in particular independent of $n$.
\begin{proof} For clarity, we divide the proof into several steps.

    \noindent \textbf{Step 1. Description of the proof structure.} We construct the family $\cS$ iteratively in $q - t + 1$ steps. At step $i$, $i = 0, \ldots, q - t$, we will construct a family $\cS_i$ of uniformity at most $t + i$ such that it satisfies properties~\ref{item: induction sunflower restriction}-\ref{item: induction max lexicographical order} (with $\cS_i$ in place of $\cS$ and $t + i$ in place of $q$) and such that
    \begin{align}
    \label{eq: definition of R i}
        \cR_i = \{F \in \ff \mid |F \cap \support \cT| \le t + i \text{ and } F \cap \support \cS_i^{(t)} \not \in \cS_i \}
    \end{align}
    has size $O_{s,k} \left (n^{- \frac{k - t (t - 1)}{t - 1}} \right ) \binom{n - t}{k - t}$. Then, we put $\cS = \cS_{r}$, where $r = \min\{|\support \cS_0| - t, q - t\}$. For brevity, we denote $\support \cS_0$ by $\bfS$.

    \noindent \textbf{Step 2. Construction of $\cS_0$.} Let us construct $\cS_0$. Due to Corollary~\ref{corollary: corollary of the stability}, there exists a set $\cT$, $|\cT| = \phi(s, t)$, such that $|\ff \setminus \ff[\cT]| = O_{s,k}(1/n) \binom{n - t}{k - t}$. Set $\cS_0 = \cT$. Property~\ref{item: induction sunflower restriction} follows from the definition of $\cT$. Property~\ref{item: fat restrictions over S} is implied by Claim~\ref{claim: each edge is fat}. Property~\ref{item: induction support restriction} is trivial. Clearly, the vector $(|\cT|)$ is maximal in the lexicographical order over all choices of $\cT$ satisfying properties~\ref{item: induction sunflower restriction}-\ref{item: induction support restriction} for $q = t$, so property~\ref{item: induction max lexicographical order} holds. Then, we bound $\cR_0$ by the following proposition. Its proof relies on the Kruskal--Katona theorem and is postponed to Section~\ref{subsection: proof of kruskal-katona type bound for remainder}.

    \begin{proposition}
    \label{proposition: bounds on same uniformity remainder}
    Suppose $\cS_i$ satisfies properties~\ref{item: induction sunflower restriction}-\ref{item: induction support restriction} with $\cS_i$ in place of $\cS$. Define
    \begin{align*}
        \widetilde{\cR}_i = \{F \in \ff \mid & \exists S_1, \ldots, S_{s - 1} \in \cS_i \text{ such that}  \\
    & \text{$F, S_1, \ldots, S_{s-1}$ form}  \text{ a sunflower with $s$ petals} \\
    & \text{and the core of size at most $t - 1$} \}.
    \end{align*}
    Then, we have
    \begin{align*}
        |\widetilde{\cR}_i| = O_{s,k} \left (n^{- \frac{k - t (t - 1)}{t - 1}} \right ) \binom{n - t}{k - t}.
    \end{align*}
    \end{proposition}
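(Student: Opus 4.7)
The plan is to combine the fat property (Property~\ref{item: fat restrictions over S}) with the sunflower-freeness of $\ff$ via a two-stage greedy extension argument, and finish with a Kruskal--Katona clique count. Throughout, write $\bfS := \support \cS_i^{(t)}$. First I would rule out witnesses with large cores. For $F \in \widetilde{\cR}_i$ with witness $(S_1, \ldots, S_{s-1})$ of core $C$ of size $c$, I would greedily pick $F_j \in \ff$ with $F_j \cap \bfS = S_j$ such that the $F_j \setminus \bfS$ are pairwise disjoint and disjoint from $F \setminus \bfS$. Since Property~\ref{item: fat restrictions over S} gives $|\ff(S_j, \bfS)| = (1 - O_{s,k}(1/n))\binom{n-T}{k-|S_j|}$ while only $O_{s,k}(n^{k-|S_j|-1})$ candidates conflict with the forbidden elements, such $F_j$'s exist for $n$ large. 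If $c = t-1$, then $F, F_1, \ldots, F_{s-1}$ is a sunflower in $\ff$ with core of size exactly $t-1$, contradicting the sunflower-freeness of $\ff$. Hence every witness core has size $c \le t - 2$.

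Next I try to extend the core. For each $(t-1-c)$-subset $W$ of $F \setminus \bfS$, repeating the greedy step with the extra requirement $W \subset F_j$ would produce a sunflower in $\ff$ with core $C \cup W$ of size exactly $t-1$, again a contradiction. Hence the extension fails for every such $W$: some $j = j(W) \in \{1, \ldots, s-1\}$ satisfies
\begin{align*}
N_j(W) := |\{F' \in \ff : F' \cap \bfS = S_j,\ W \subset F'\}| < C_{s,k}\cdot n^{k - |S_j| - (t-1-c) - 1}.
\end{align*}
Calling such $W$ \emph{bad for $j$}, the identity $\sum_W N_j(W) = \binom{k-|S_j|}{t-1-c}|\ff(S_j, \bfS)|$ together with the fat property shows that the deficit $\sum_W (\max_W N_j(W) - N_j(W))$ is $O_{s,k}(1/n)\binom{n-T}{t-1-c}\cdot \max_W N_j(W)$. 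Since each bad $W$ contributes at least $\tfrac12 \max_W N_j(W)$ to this deficit, the set $B$ of all bad $W$'s (across $j$ and the $O_{s,k}(1)$ templates) has $|B| = O_{s,k}(n^{t-2-c})$.

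Finally, for each $F \in \widetilde{\cR}_i$ with witness core of size $c$, every $(t-1-c)$-subset of $F \setminus \bfS$ lies in $B$, so $F \setminus \bfS$ is a clique in the $(t-1-c)$-uniform hypergraph $B$. Kruskal--Katona (Theorem~\ref{theorem: kruskal--katona theorem}), applied to the family of such cliques, bounds the number of cliques of size $r$ by $O(|B|^{r/(t-1-c)}) = O_{s,k}(n^{r(t-2-c)/(t-1-c)})$. Summing over $r = |F \setminus \bfS| \le k$, core sizes $c \in \{0, \ldots, t-2\}$, the constant number of witness templates, and $O_{s,k}(1)$ choices of $F \cap \bfS \subset \bfS$, the dominant term arises at $c = 0$ (where $(t-2-c)/(t-1-c)$ is maximized) and $r = k$, yielding $|\widetilde{\cR}_i| = O_{s,k}(n^{k(t-2)/(t-1)}) = O_{s,k}(n^{-(k-t(t-1))/(t-1)})\binom{n-t}{k-t}$, as required. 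The edge case $|F \setminus \bfS| < t-1-c$, in which no $W$ exists, contributes only $O_{s,k}(n^{t-2})$ and is dominated by the main term.

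The main obstacle is the quantitative step in the second stage: calibrating the ``bad'' threshold so that a failed greedy extension really does match the deficit predicted by the fat property, and then extracting the precise bound $|B| = O(n^{t-2-c})$. The worst case, and where the bound is tight, is when the witness core has size exactly $0$ (the $(s-1)$ petals are pairwise disjoint in $\cS_i$ and $F$ is disjoint from them); smaller values of $c$ give strictly better exponents $(t-2-c)/(t-1-c)$, and saturation of Kruskal--Katona at $c = 0$ is what fixes the final exponent $(k-t(t-1))/(t-1)$.
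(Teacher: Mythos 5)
Your proposal is correct and takes essentially the same approach as the paper's proof: it uses the fat restriction property (Property~\ref{item: fat restrictions over S}) together with sunflower-freeness of $\ff$ to show that the $\ell$-shadow of each $F\setminus\bfS$ (with $\ell = t-1-c$) must lie in a small ``bad'' set of size $O_{s,k}(n^{\ell-1})$, and then applies Kruskal--Katona to bound the remainder. The only differences are cosmetic: the paper performs the pigeonhole over $X = F\cap\bfS$ upfront and phrases the bad sets as complements of high-degree shadows $\hh_j$, whereas you sum over $X$, $r$, and $c$ at the end.
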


    By the maximality of $\cT$ (i.e., $|\cT| = \phi(s,t)$) and the definition~\eqref{eq: definition of R i} of $\cR_0$, for any $F \in \cR_0$ there exist sets $T_1, \ldots, T_{s-1} \in \cT$ such that $F, T_1, \ldots, T_s$ form a sunflower with $s$ petals and the core of size at most $t - 1$ (otherwise, we can choose any $X \in \binom{F \setminus \support \cT}{t - |F \cap \support \cT|}$ and add $X \cup (F \cap \support \cT)$ to $\cT$). Hence, $\cR_0 \subset \widetilde{\cR}_0$ and, by Proposition~\ref{proposition: bounds on same uniformity remainder}, we have $|\cR_0| = O_{s,k} \left (n^{- \frac{k - t (t - 1)}{t - 1}} \right ) \binom{n - t}{k - t}$.

    \noindent \textbf{Step 3. $\cS_{i-1}\to \cS_i$.} In what follows, we  describe how to construct $\cS_i$, given $\cS_{i - 1}$. We assume that $\cS_{i - 1}$ satisfies properties~\ref{item: induction sunflower restriction}-\ref{item: induction max lexicographical order} of the lemma, and $\cR_{i - 1} \subset \widetilde{\cR}_{i - 1}, \; |\widetilde{\cR}_{i - 1}| = O_{s,k} \left (n^{- \frac{k - t (t - 1)}{t - 1}} \right ) \binom{n}{k - t}$.

    We consider two cases. If $t + i - 1 = |\bfS|$, then put $\cS = \cS_{i - 1}$, $\cR = \cR_{i - 1}$, and stop. Clearly, we have $\ff \setminus \ff_{\cS} \subset \cR$. Since $k \ge (q + 1)(t - 1)$ and $\cR = O_{s,k} \left ( n^{-\frac{k - t (t - 1)}{(t - 1)} }\right ) \binom{n}{k - t}$, we have $|\ff \setminus \ff_S| = O_{s,k}(n^{-(q - t) - 1}) \binom{n - t}{k - t}$, and the lemma follows. Hence, we assume that $t + i \le |\bfS| \le T$.

    \noindent \textbf{Step 4. Decomposing $\ff \setminus \ff_{\cS_{i - 1}}$.} Define the following families:
    \begin{align}
        \ff_i & = \{F \in \ff \mid |F \cap \bfS| = t + i \}, \nonumber \\
        \widetilde{\cS}_i & = \left \{S \in \binom{|\bfS|}{t + i} \mid |\ff_i(S)| \ge sk \binom{n - |\bfS|}{k - t - i - 1} \right \} \label{eq: tilde S_i definition}\\
        \cD_{i,S}^* & = \bigg \{ D \in \binom{[n] \setminus \bfS}{k - t - i} \mid \forall p \in [t - 1] \; \forall X \in \binom{D}{p}  \nonumber \\
        & \qquad  \quad |\ff_i(S \sqcup X)| \ge sk \binom{n - |\bfS| - |X|}{k - t- i - |X|- 1}\bigg \}, \quad S \in \widetilde{\cS}_i, \label{eq: D stap iS definition}\\
        \ff_i^* & = \bigcup_{S \in \widetilde{\cS}_i} \left ( \{S\} \vee \{\cap_{S \in \widetilde{\cS}_i} \cD^*_{i, S}\} \right ) \label{eq: F star i definition}.
    \end{align}
    Informally, we include in $\ff_i^*$ only those sets $F\in \ff_i$ for which any subset of $F$ containing their intersection with $\bfS$ and having at most $t-1$ elements outside $\bfS$ has high degree in $\ff_i$. This yields the following partition of $\ff$:
    \begin{align*}
        \ff = \ff_{\cS_{i - 1}} \sqcup \ff^*_i \sqcup (\ff_i \setminus \ff^*_{i}) \sqcup \cO_i \sqcup \cR_{i - 1},
    \end{align*}
    where $\cO_i$ is defined as
    \begin{align*}
        \cO_i = \{F \in \ff \mid |F \cap \bfS| \ge t + i + 1\}.
    \end{align*}
    Clearly, we have $|\cO_i| \le 2^{|\bfS|} \binom{n}{k - t - i - 1} \le 2^{t \phi(s, t)} \binom{n}{k - t - i - 1} = O_{s,k} (n^{-i - 1}) \binom{n - t}{k - t}$.
    Let us bound $|\ff_i \setminus \ff^*_i|$. For each set $F \in \ff_i \setminus \ff^*_i$, there are two options:
    \begin{enumerate}
        \item $F \cap \bfS \not \in \widetilde{\cS}_i$;
        \item $F \cap \bfS \in \widetilde{\cS}_i$ and $F \setminus \bfS \not \in \cD^*_{i, S}$ for some $S \in \widetilde{\cS}_i$, i.e., there exist a number $p \in [t - 1]$ and a set $X \subset F \setminus \bfS$ such that $\ff_i(S \sqcup X) \le sk \binom{n - |\bfS| -  |X|}{k - t- i - |X|- 1}$.
    \end{enumerate} 
    For each $S \in \widetilde{\cS}_i$, define
    \begin{align*}
        \cE_{i, p, S} = \bigg \{X \in \binom{[n] \setminus\bfS}{p} \mid \ff_i(S \sqcup X) < sk \binom{n - |\bfS| - |X|}{k - t- i - |X|- 1} \bigg \}.
    \end{align*}
    Then, we have
    \begin{align*}
        |\ff_i \setminus \ff_i^*| & \le \sum_{S \in \binom{\bfS}{t + i} \setminus \widetilde{\cS}_i} |\ff_i[S]| +  \sum_{S \in \widetilde{\cS}_i} \sum_{p = 1}^{t - 1} |\ff[\cE_{i,p, S}]| \\
        & \le sk \binom{|\bfS|}{t + i} \binom{n - |\bfS|}{k - t- i - 1} + sk \sum_{S \in \binom{\bfS}{t + i}} \sum_{p = 1}^{t - 1} |\cE_{i,p, S}| \binom{n - |\bfS| - p}{k - t- i - p - 1} \\
        & \le O_{s,k}(n^{-i - 1}) \binom{n - t}{k - t} +  sk \binom{|\bfS|}{t + i} \sum_{p = 1}^{t - 1} \binom{n}{p} \binom{n}{k - t - i - p - 1} \\
        & = O_{s,k} \left ( n^{- i - 1} \right ) \binom{n - t}{k - t}.
    \end{align*}
    This yields the following bound on $|\ff \setminus (\ff_{\cS_{i - 1}} \sqcup \ff^*_i)|$:
    \begin{align}
        |\ff \setminus (\ff_{\cS_{i - 1}} \sqcup \ff^*_i)| & \le |\ff_i \setminus \ff_i^*| + |\cO_i| + |\cR_{i - 1}| \nonumber \\
        & = O_{s,k}(n^{- i - 1}) \binom{n - t}{k - t} + O_{s,k}(n^{- i - 1}) \binom{n - t}{k - t} + O_{s,k} \left ( n^{-\frac{k - t (t - 1)}{t - 1}} \right ) \binom{n - t}{k - t} \nonumber \\
        &= O_{s,k} \left ( n^{- i - 1} \right ) \binom{n - t}{k - t}, \label{eq: removing raminder on i-th layer}
    \end{align}
    where we used $k \ge (q + 1) (t - 1)$ in the third equality.
    
    \noindent \textbf{Step 5. Upper bound on $|\ff^*_i|$.} In order to analyze the structure in $\ff_i^*$, we use the decomposition
    \begin{align*}
        |\ff^*_i| = \sum_{D \in \binom{[n] \setminus \bfS}{k - t- i}} |\ff^*_i(D)|.
    \end{align*}
    Our ultimate goal in that respect is to show that almost all $\ff_i^*(D)$ are, in fact, the same. This common family will  then be added to $\cS_{i-1}$ in order to form $\cS_i^{(t+i)}$.
    
    Define
    \begin{align}
        \cD_{i - 1} = & \bigg \{ D \in \binom{[n] \setminus \bfS}{k - t- i} \mid  \forall S \in \cS_{i - 1} \; \forall p \in [t - 1] \; \forall X \in \binom{D}{p} \nonumber \\ 
        &\qquad \qquad |\ff(S \sqcup X, \bfS \sqcup X)| \ge sk \binom{n - |\bfS|- p}{k - p - |S| - 1}   \bigg \}. \label{eq: Di-1 definition}
    \end{align}
    By property~\ref{item: fat restrictions over S} we have $|\ff(S)| = (1 - O_{s,k}(1/n)) \binom{n}{k - |S|}$ for each $S \in \cS_{i - 1}$. This allows us to bound $|\cD_{i - 1}|$ from below. We state it as a separate  proposition. Its proof is postponed to Section~\ref{subsection: proof of k-t-i fatness}.
    \begin{proposition}
    \label{proposition: k-t-i fatness}
    We have
    \begin{align*}
        \left |\binom{[n] \setminus \bfS}{k - t - i}  \setminus \cD_{i - 1} \right | = O_{s,k}(1/n) \binom{n- |\bfS|}{k - t- i}
    \end{align*}
    \end{proposition}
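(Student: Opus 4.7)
The plan is to carry out a two-step double-counting argument that first bounds, for each fixed $S \in \cS_{i-1}$ and each $p \in [t-1]$, the number of ``bad'' $p$-sets $X \subset [n] \setminus \bfS$ for which $|\ff(S\sqcup X, \bfS \sqcup X)|$ is below the threshold $sk\binom{n-|\bfS|-p}{k-p-|S|-1}$, and then uses a union bound to control the number of $(k-t-i)$-sets $D$ containing a bad $X$.

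For the first step, fix $S \in \cS_{i-1}$ and $p \in [t-1]$, and define the bad set
\begin{align*}
    \cE_{p,S} = \bigg\{X \in \binom{[n]\setminus \bfS}{p} \,:\, |\ff(S\sqcup X, \bfS \sqcup X)| < sk\binom{n-|\bfS|-p}{k-p-|S|-1}\bigg\}.
\end{align*}
Each $F \in \ff$ with $F \cap \bfS = S$ contributes $\binom{k-|S|}{p}$ many pairs $(F,X)$ with $X \in \binom{F\setminus \bfS}{p}$; double-counting yields
\begin{align*}
    \sum_{X \in \binom{[n]\setminus \bfS}{p}} |\ff(S\sqcup X, \bfS \sqcup X)| \;=\; \binom{k-|S|}{p}\cdot |\ff(S,\bfS)|.
\end{align*}
By property~\ref{item: fat restrictions over S} we have $|\ff(S,\bfS)| \ge (1-O_{s,k}(1/n))\binom{n-|\bfS|}{k-|S|}$, and the identity $\binom{n-|\bfS|}{k-|S|}\binom{k-|S|}{p} = \binom{n-|\bfS|}{p}\binom{n-|\bfS|-p}{k-|S|-p}$ shows that the average value of $|\ff(S\sqcup X, \bfS \sqcup X)|$ over $X \in \binom{[n]\setminus \bfS}{p}$ is at least $(1-O_{s,k}(1/n))\binom{n-|\bfS|-p}{k-|S|-p}$, which coincides with the trivial upper bound $\binom{n-|\bfS|-p}{k-|S|-p}$ up to the $O_{s,k}(1/n)$ factor.

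The threshold defining $\cE_{p,S}$ is only a $\Theta_{s,k}(1/n)$ fraction of the maximum possible value $\binom{n-|\bfS|-p}{k-|S|-p}$, because
\begin{align*}
    \frac{sk\binom{n-|\bfS|-p}{k-|S|-p-1}}{\binom{n-|\bfS|-p}{k-|S|-p}} = \frac{sk(k-|S|-p)}{n-|\bfS|-k+|S|+1} = O_{s,k}(1/n).
\end{align*}
Therefore the defect $\binom{n-|\bfS|-p}{k-|S|-p} - |\ff(S\sqcup X, \bfS \sqcup X)|$ is at least $(1-O_{s,k}(1/n))\binom{n-|\bfS|-p}{k-|S|-p}$ for each $X \in \cE_{p,S}$, while the total defect summed over all $X$ is at most $O_{s,k}(1/n)\binom{n-|\bfS|}{p}\binom{n-|\bfS|-p}{k-|S|-p}$. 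Rearranging gives $|\cE_{p,S}| = O_{s,k}(1/n)\binom{n-|\bfS|}{p}$.

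For the second step, a set $D \in \binom{[n]\setminus \bfS}{k-t-i}$ fails to lie in $\cD_{i-1}$ precisely when there exist $S \in \cS_{i-1}$, $p \in [t-1]$ and $X \in \binom{D}{p}\cap \cE_{p,S}$. Each fixed $X$ of size $p$ is contained in exactly $\binom{n-|\bfS|-p}{k-t-i-p}$ sets $D$, so
\begin{align*}
    \bigg|\binom{[n]\setminus \bfS}{k-t-i}\setminus \cD_{i-1}\bigg| \le \sum_{S \in \cS_{i-1}} \sum_{p=1}^{t-1} |\cE_{p,S}|\cdot \binom{n-|\bfS|-p}{k-t-i-p}.
\end{align*}
Using $|\cS_{i-1}| \le 2^{|\bfS|} \le 2^T = O_{s,k}(1)$, $|\cE_{p,S}| = O_{s,k}(1/n)\binom{n-|\bfS|}{p}$, and the identity $\binom{n-|\bfS|}{p}\binom{n-|\bfS|-p}{k-t-i-p} = \binom{k-t-i}{p}\binom{n-|\bfS|}{k-t-i}$ with $\binom{k-t-i}{p} = O_{s,k}(1)$, the right-hand side is $O_{s,k}(1/n)\binom{n-|\bfS|}{k-t-i}$, as required. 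The only subtlety lies in checking the $O_{s,k}$ constants uniformly in $S \in \cS_{i-1}$ and $p \in [t-1]$, but both ranges are finite and independent of $n$, so there is no genuine obstacle.
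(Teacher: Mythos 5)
Your proof is correct and follows essentially the same double-counting argument as the paper: fix $S$ and $p$, double-count $\sum_X |\ff(S\sqcup X, \bfS\sqcup X)| = \binom{k-|S|}{p}|\ff(S,\bfS)|$, use property~\ref{item: fat restrictions over S} to show the bad set $\cE_{p,S}$ has size $O_{s,k}(1/n)\binom{n-|\bfS|}{p}$, then union-bound over $S$, $p$ and the $\binom{n-|\bfS|-p}{k-t-i-p}$ extensions to a $(k-t-i)$-set. Your phrasing in terms of a ``defect'' from the trivial maximum is a cosmetic restatement of the split the paper performs directly; the underlying estimates are identical.
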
 
    The proposition implies
    \begin{align}
        |\ff^*_i| & = \sum_{D \in \binom{[n] \setminus \bfS}{k - t- i}} |\ff^*_i(D)| \nonumber \\
        & \le \sum_{D \in \cD_{i - 1}} |\ff^*(D)| + \binom{|\bfS|}{t + i} \cdot \left |\binom{[n] \setminus |\bfS|}{k - t- i}  \setminus \cD_{i - 1} \right | \nonumber \\
        & = \sum_{D \in \cD_{i - 1}} |\ff^*(D)| + O_{s,k} (n^{-i - 1}) \binom{n - t}{k - t} \label{eq: bound with fat D i of S i}.
    \end{align}
    To bound $\ff^*_i(D)$ for each $D \in \cD_{i - 1}$, we introduce the following quantity:
    \begin{align}
        \phi_i = \max \bigg \{|\cQ| \mid & \cQ \subset \binom{\bfS}{t + i} \text{ and } \label{eq: definition of phi i} \\
        & \cS_{i - 1} \cup \cQ \text{ does not contain a sunflower with $s$ petals} \nonumber \\
        & \text{and the core of size at most $t - 1$} \bigg \} . \nonumber
    \end{align}
    We claim that $|\ff^*_i(D)| \le \phi_i$. It is enough to show that $\cS_{i - 1} \sqcup \ff^*_i(D)$ does not contain a sunflower with $s$ petals and the core of size at most $t - 1$. Let $T_1, \ldots, T_{s}$ be such a sunflower. Define $\ell = t - 1- |\cap_i T_i|$. WLOG, we assume that $T_1, \ldots, T_p \in \cS_{i - 1}$ and $T_{p + 1}, \ldots, T_s \in \ff^*_i(D)$.   Note that $T_j \in \widetilde{\cS}_i$ for $j > p$ by the definition of $\ff^*_i$, so $\ff^*_i(T_p) \ge sk \binom{n - |\bfS|}{k - t- i - 1}$ by the definition~\eqref{eq: tilde S_i definition} of $\widetilde{\cS}_i$. Consider two cases.

    \textbf{Case 1. We have $\ell = 0$.} We rely on the following standard proposition, which proof is postponed to Section~\ref{subsection: proof of cross-matching}.

    \begin{proposition}
    \label{proposition: cross matching}
    Consider the families $\cA_1 \subset \binom{U}{k_i}, \cA_2 \subset \binom{U}{k_2}, \ldots, \cA_s \subset \binom{U}{k_s}$ such that $|\cA_i| \ge \left (\sum_{i = 1}^s k_i \right ) \binom{|U|}{k_i - 1}$. Then, there exist disjoint $F_1 \in \cA_1, \ldots, F_s \in \cA_s$.
    \end{proposition}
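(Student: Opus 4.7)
The plan is a standard greedy/union-bound argument, which I would organize as induction on $s$. The base case $s=1$ is immediate: the hypothesis gives $|\cA_1| \ge k_1 \binom{|U|}{k_1-1} \ge 1$, so any $F_1 \in \cA_1$ works (the degenerate case $k_i = 0$ is trivial and can be handled separately or absorbed into the induction).

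For the inductive step, I would first apply the statement with $s-1$ families to $\cA_1, \ldots, \cA_{s-1}$: the required hypothesis $|\cA_i| \ge \bigl(\sum_{j=1}^{s-1} k_j\bigr)\binom{|U|}{k_i-1}$ is implied by what we are given, so the induction yields pairwise disjoint $F_1 \in \cA_1, \ldots, F_{s-1} \in \cA_{s-1}$. Set $S = F_1 \cup \cdots \cup F_{s-1}$, so $|S| \le \sum_{j=1}^{s-1} k_j$. Then I would count the members of $\binom{U}{k_s}$ that meet $S$: since each fixed point $u \in U$ belongs to exactly $\binom{|U|-1}{k_s - 1}$ sets of $\binom{U}{k_s}$, the number of such sets is at most $|S| \binom{|U|-1}{k_s - 1}$. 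We have
\[
|S|\binom{|U|-1}{k_s-1} \;\le\; \Bigl(\sum_{j=1}^{s-1} k_j\Bigr)\binom{|U|-1}{k_s-1} \;<\; \Bigl(\sum_{j=1}^{s} k_j\Bigr)\binom{|U|}{k_s-1} \;\le\; |\cA_s|,
\]
where the strict inequality uses $k_s \ge 1$ together with $\binom{|U|}{k_s-1} \ge \binom{|U|-1}{k_s-1}$. Consequently $\cA_s$ contains at least one set $F_s$ disjoint from $S$, completing the induction.

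I do not anticipate any real obstacle: this is the textbook greedy cross-matching argument, and the only care point is ensuring that the comparison between $|S|\binom{|U|-1}{k_s-1}$ and $|\cA_s|$ is strict, which follows from the $k_s$-slack between $\sum_{j=1}^{s-1} k_j$ and $\sum_{j=1}^{s} k_j$. The argument is clean and will occupy only a few lines when written out in full.
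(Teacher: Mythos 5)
Your proof is correct and is essentially the same greedy/union-bound argument as the paper's: the paper phrases it as taking a maximal partial system $F_1,\ldots,F_p$ and deriving a contradiction, while you phrase it as induction on $s$, but the counting step (at most $|S|\binom{|U|}{k_s-1}$ sets of $\cA_s$ can meet $S=F_1\cup\cdots\cup F_{s-1}$, leaving at least one disjoint set by the hypothesis) is the same. No gap.
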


    The constructed family $\cS_{i - 1}$ satisfies property~\ref{item: fat restrictions over S}, so $|\ff(T_j, \bfS)| \ge sk \binom{n - |\bfS|}{k - |T_p| - 1}$ for $j \in [p]$, provided $n$ is large enough. Applying Proposition~\ref{proposition: cross matching} with $U=[n] \setminus \bfS$  and $k_i = k - |T_i|$, we deduce that there are disjoint sets $F_1 \in \ff(T_1, \bfS), \ldots, F_s \in \ff^*_i(T_s) $. Note that $\ff^*_i(T_s) = \ff^*_i(T_s, \bfS)$ by the definition~\eqref{eq: tilde S_i definition} of $\widetilde{\cS}_i \ni T_s$ and the construction~\eqref{eq: F star i definition} of $\ff^*_i$. Hence, $\ff$ contains a sunflower $T_1 \sqcup F_1, \ldots, T_s \sqcup F_s$ with $s$ petals and the core of size $t - 1$.

    \textbf{Case 2. We have $\ell > 0$.} Then, choose any $X \in \binom{D}{\ell}$. Note that for any $S \in \ff^*(D)$, we have $|\ff^*(S \sqcup X)| \ge \binom{n - |\bfS| - |X|}{k - t - i - |X| - 1}$ by the definition~\eqref{eq: D stap iS definition} of $\cD_{i, S}^*$. In particular, $|\ff^*_i(T_j \sqcup X)| \ge sk \binom{n - |\bfS| - |X|}{k - t - i - |X| - 1}$ for $j = p+1, \ldots, s$. Since $D \in \cD_{i - 1}$, we have $|\ff(T_j \sqcup X, \bfS \sqcup X)| \ge sk \binom{n - |\bfS| - |X| }{k - t- i - |X| - 1}$ for all $j \in [p]$. Then, we apply Proposition~\ref{proposition: cross matching} again, and find disjoint $F_1 \in \ff(T_1 \sqcup X, \bfS  \sqcup X), \ldots, F_s \in \ff^*_i(T_s \sqcup X)$. Hence, $\ff$ contains a sunflower $X \sqcup T_1 \sqcup F_1, \ldots, X \sqcup T_s \sqcup F_s$ with $s$ petals and the core of size $t - 1$, a contradiction.

    Thus, $\cS_{i - 1} \sqcup \ff^*_i(D)$ does not contain a sunflower with $s$ petals and the core of size at most $t - 1$. Therefore, $|\ff^*_i(D)| \le \phi_i$, and
    \begin{align}
    \label{eq: upper bound on Fi star}
        |\ff^*_i| \le \phi_i \cdot \binom{n - |\bfS|}{k - t- i} + O_{s, k}(n^{- i - 1}) \binom{n - t}{k - t}.
    \end{align}

    \noindent \textbf{Step 6. Deriving the value of $\phi_i$.} We claim that 
    \begin{align}
    \label{eq: phi-i equality}
        \phi_i + \sum_{j = 0}^{i - 1} |\cS^{(t + j)}_{i - 1}| \binom{T -|\bfS|}{i - j} = \widetilde{\phi}_i(\cS_*),
    \end{align}
    where $\widetilde{\phi}_j, \cS_*$ were defined in Section~\ref{section: extremal families}. First, we show that 
    \begin{align*}
        \phi_i + \sum_{j = 0}^{i - 1} |\cS^{(t + j)}_{i - 1}| \binom{T - |\bfS|}{i - j} \le \widetilde{\phi}_i(\cS_*).
    \end{align*}
    Let $\cQ$ be any family for which the maximum in the definition~\eqref{eq: definition of phi i} of $\phi_i$ is attained. Then, we have
    \begin{align}
        \widetilde{\phi}_i(\cS_{i - 1} \sqcup \cQ) = \phi_i + \sum_{j = 0}^{i - 1} |\cS^{(t + j)}| \binom{T - |\bfS|}{i - j}. \label{eq: phi i upper bound part}
    \end{align}
    By induction, the family $\cS_{i - 1}$ satisfies property~\ref{item: induction max lexicographical order}, and so we have $\widetilde{\phi}_j (\cS_{i - 1} \sqcup \cQ) = \widetilde{\phi}_j(\cS_{i - 1}) = \widetilde{\phi}_j(\cS_*)$ for all $j \in [i - 1]$. By the lexicographic maximality of $\widetilde{\phi}(\cS_*)$, we have $\widetilde{\phi}_{i}(\cS_{i - 1} \sqcup \cQ) \le \widetilde{\phi}_i(\cS_*)$, so~\eqref{eq: phi i upper bound part} follows. It remains to prove that
    \begin{align*}
        \phi_i + \sum_{j = 0}^{i - 1} |\cS^{(t + j)}_{i - 1}| \binom{T - |\bfS|}{i - j} \ge \widetilde{\phi}_i(\cS_*).
    \end{align*}
    
    Recall the definition \eqref{eq: extremal example refined} of $\ff_*$. Since $\ff$ is extremal, we have
    \begin{align}
    \label{eq: extremality condition}
        |\ff| \ge |\ff_*| = \sum_{r = 0}^i \widetilde{\phi}_r(\cS_*) \binom{n - T}{k - t-r} + O_{s,k}(n^{- i - 1}) \binom{n}{k - t},
    \end{align}
    where we used Proposition~\ref{proposition: phi representation} for the equality. Using  Proposition~\ref{proposition: phi representation} for $\cS_{i-1}$ with $q=t+i$,
we also have 
\begin{align}\label{eqsi-1}
    |\ff_{\cS_{i - 1}}| = \sum_{r = 0}^{i} \widetilde{\phi}_r(\cS_{i - 1}) \binom{n - T}{k - t - r}  + O_{s,k} (n^{-i - 1}) \binom{n}{k - t}
\end{align}

Next, we combine the above. 
{\small \begin{align}
    & \sum_{r = 0}^i \widetilde{\phi}_i(\cS_*) \binom{n - T}{k - t - r} + O_{s,k}(n^{-i - 1}) \binom{n}{k - t} \overset{\eqref{eq: extremality condition}}{\le}|\ff|\overset{\eqref{eq: removing raminder on i-th layer}}\le |\ff_{\cS_{i - 1}}| + |\ff^*_i| + O_{s,k} (n^{-i - 1}) \binom{n}{k - t} \label{eq: lower bound on F-S and F-i star}\\
    & \qquad \overset{\eqref{eq: upper bound on Fi star},\eqref{eqsi-1}}{\le} \sum_{r = 0}^{i} \widetilde{\phi}_r(\cS_{i - 1}) \binom{n - T}{k - t - r} + \phi_i  \binom{n}{k - t - i}  + O_{s,k}(n^{- i - 1}) \binom{n}{k - t} \nonumber \\
    & \qquad = \sum_{r = 0}^{i - 1} \widetilde{\phi}_r(\cS_{i - 1}) \binom{n - T}{k - t - r} + \left (\phi_i +  \widetilde{\phi}_i(\cS_{i - 1}) \right ) \binom{n}{k - t - i} + O_{s,k}(n^{- i - 1}) \binom{n}{k - t}, \nonumber
\end{align}}
Using again that $\widetilde{\phi}_j(\cS_{i - 1}) = \widetilde{\phi}_{j}(\cS_*)$ for $j \in [i - 1]$ due to property~\ref{item: induction max lexicographical order} and
\begin{align}
   \label{eqphi} \widetilde{\phi}_i(\cS_{i - 1}) = \sum_{j = 0}^{i - 1} |\cS^{(t + j)}_{i - 1}| \binom{T - |\bfS|}{i - j},
\end{align}
we deduce~\eqref{eq: phi-i equality}, provided $n$ is large enough. 

Finally, we can lower bound $|\ff^*_i|.$
\begin{align}
    |\ff^*_i| & \overset{\eqref{eq: lower bound on F-S and F-i star},\eqref{eqphi}}{\ge} \left ( \widetilde{\phi}_i(\cS_*) - \sum_{j = 0}^{i - 1} |\cS_{i - 1}^{(t + j)}| \binom{T - |\bfS|}{i - j} \right ) \binom{n - T}{k - t- i} - O_{s,k} \left ( n^{-i - 1} \right ) \binom{n}{k - t} \nonumber \\
    & \overset{\eqref{eq: phi-i equality}}{=} \phi_i \binom{n - |\bfS|}{k - t- i} - O_{s,k} (n^{- i - 1}) \binom{n}{k - t}. \label{eq: Fi star lower bound using phi}
\end{align}

\noindent \textbf{Step 7. Applying isoperimetry.} This part follows the proof of Theorem~\ref{theorem: stability in Erdos-Duke}. Define
\begin{align*}
    \cD_i^* = \bigcap_{S \in \widetilde{\cS}_i} \cD^*_{i, S}.
\end{align*}
Note that $\ff_i^*(D)$ is non-empty iff $D \in \cD^*_i$ by~\eqref{eq: F star i definition}. We may refine the bound~\eqref{eq: bound with fat D i of S i}:
\begin{align*}
    |\ff^*_i| & = \sum_{D \in \cD_{i - 1}} |\ff^*_i(D)| + O_{s,k}(1/n) \binom{n}{k - t- i} \\
    & = \sum_{D \in \cD_{i - 1} \cap \cD^*_i} |\ff^*_i(D)| + O_{s,k}(1/n) \binom{n}{k - t- i}.
\end{align*}
Recall that $|\ff^*_i(D)| \le \phi_i$. Let 
\begin{align*}
    \widetilde{\cD}_i = \{D \in \cD_i^* \cap \cD_{i - 1} \mid |\ff^*_i(D)| = \phi_i\}.
\end{align*}
From the lower bound~\eqref{eq: Fi star lower bound using phi}, we derive
\begin{align*}
    \phi_i \cdot |\widetilde{\cD}_i| + (\phi_i - 1) \cdot \left | \binom{[n] \setminus \bfS}{k - t- i} \setminus \widetilde{\cD}_i \right | \ge \phi_i \binom{n - |\bfS|}{k - t- i} - O_{s,k}(1/n) \cdot \binom{n}{k - t- i},
\end{align*}
and so
\begin{align}
    |\widetilde{\cD}_i| \ge \binom{n - |\bfS|}{k - t- i} - O_{s,k}(1/n) \binom{n}{k - t - i}. \label{eq: large measure of tilde D i}
\end{align}

Consider the Jonson graph $J_{[n] \setminus \bfS, k - t- i}$ with the vertex set $\binom{[n] \setminus \bfS}{k - t-i}$ and two sets $D_1, D_2$ being adjacent iff $|D_1 \cap D_2| = k - t-i - 1$. As in the proof of Theorem~\ref{theorem: stability in Erdos-Duke}, we claim that if $D_1, D_2 \in \widetilde{\cD}_i$ are adjacent, then $\ff^*_i(D_1) = \ff^*_i(D_2)$. Otherwise $|\ff^*_i(D_1) \cup \ff^*_i(D_2)| \ge \phi_i + 1$, so by the definition~\eqref{eq: definition of phi i} of $\phi_i$, there exists a sunflower $T_1, \ldots, T_s \in \cS_{i - 1} \cup \ff^*_i(D_1) \cup \ff^*_i(D_2)$ with $s$ petals and the core of size at most $t - 1$. WLOG, we assume that $T_1, \ldots, T_{p} \in \cS_{i - 1}$ and $T_{p + 1}, \ldots, T_s \in \ff^*_i(D_1) \cup \ff^*_i(D_2)$. Let $\ell = t - 1- |\cap_i T_i|$ and consider two cases.

\textbf{Case 1. We have $\ell = 0$.} Then, we have $|\ff(T_j, \bfS)| \ge (1 - O_{s,k}(1/n)) \binom{n - |\bfS|}{k - |T_j|}$ for $j \in [p]$ by property~\ref{item: fat restrictions over S} of the lemma and $|\ff^*_i(T_j)| \ge sk \binom{n - |\bfS|}{k - t- i}$ if $j > p$ by the definition~\eqref{eq: tilde S_i definition} of $\widetilde{\cS}_i$. Hence, by Proposition~\ref{proposition: cross matching}, there exist disjoint sets $F_1 \in \ff(T_1, \bfS), \ldots, F_s \in \ff^*_i(T_s) = \ff_i^*(T_s, \bfS)$, so $\ff$ contains a sunflower $F_1 \sqcup T_1, \ldots, F_s \sqcup T_s$, a contradiction.

\textbf{Case 2. We have $\ell > 0$.} Since $k \ge (q + t + 1)(t - 1) \ge t + i + t - 1$, there exists $X \subset D_1 \cap D_2$ of size $\ell$. Since $D_1, D_2 \in \widetilde{\cD}_i \subset \cD_i^* \cap \cD_{i - 1}$, we have $|\ff(T_j \sqcup X, \bfS \sqcup X)| \ge sk \binom{n - |\bfS|}{k - |T_j|- |X| - 1}$ for $j \in [p]$ by the definition of~\eqref{eq: Di-1 definition} of $\cD_{i - 1}$ and $|\ff_{i}^*(T_j \sqcup X, \bfS \sqcup X)| \ge sk \binom{n - |\bfS| - |X|}{k - t - i -|X|- 1}$ for $j > p$ by the definition of $\cD^*_i$ (cf.~\eqref{eq: D stap iS definition}). By Proposition~\ref{proposition: cross matching}, there are disjoint sets $F_1 \in \ff(T_j \sqcup X, \bfS \sqcup X), \ldots, F_s \in \ff^*_i(T_s \sqcup X, \bfS \sqcup X)$. Hence, $\ff$ contains a sunflower with $s$ petals and the core of size $t - 1$, a contradiction.

The above implies $\ff_i^*(D_1) = \ff_i^*(D_2)$, so for each connected component $\cC$ of the subgraph $J_{[n] \setminus \bfS, k - t - i}[\widetilde{\cD}]$ there exists a family $\cQ$ such that $\ff_i^*(D) = \cQ$ for any $D \in \cC$, and $\cS_{i - 1} \sqcup \cQ$ does not contain a sunflower with $s$ petals and the core of size at most $t - 1$. Let $\bbC[\widetilde{\cD}_i]$ be the set of connected components of $J_{[n] \setminus \bfS, k - t- i}[\widetilde{\cD}]$ and let $\widetilde{\cC}_i$ be the largest connected component of $J_{[n] \setminus \bfS, k - t- i}[\widetilde{\cD}]$. For each $\cC \in \bbC[\widetilde{\cD}_i] \setminus \widetilde{\cC}_i$, edges of $\partial \cC$ connect vertices of $\cC$ and $\binom{[n] \setminus \bfS}{k - t - i} \setminus \widetilde{\cD}_i$. Also, we have $|\cC| \le \frac{1}{2} \binom{n - |\bfS|}{k - t- i}$. Recall that the spectral gap of $J_{n - |\bfS|, k - t- i}$ is $(n - |\bfS|)$ (see~\eqref{eq: Johnson graph spectral gap}), so Lemma~\ref{lemma: Cheeger inequality} yields:
\begin{align*}
    \frac{n - |\bfS|}{2} |\widetilde{\cD}_i \setminus \widetilde{\cC}_i| & = \frac{n - |\bfS|}{2} \sum_{\cC \in \bbC[\widetilde{\cD}_i] \setminus \{\widetilde{\cC}_i\}} |\cC| \le \sum_{\cC \in \bbC[\widetilde{\cD}_i] \setminus \{\widetilde{\cC}_i\}} |\partial \cC| \\
    & \le \Delta(J_{[n] \setminus \bfS, k - t -i}) \cdot \left | \binom{[n] \setminus \bfS}{k - t - i} \setminus \widetilde{\cD}_i \right |,
\end{align*} 
where $\Delta(J_{[n] \setminus \bfS, k - t- i}) = (n - |\bfS|)(k - t - i)$ is the largest degree of $J_{[n] \setminus \bfS, k - t- i}$. Using~\eqref{eq: large measure of tilde D i}, we derive
\begin{align}
\label{eq: largest i-th component is fat}
    |\widetilde{\cD}_i \setminus \widetilde{\cC}_i| = O_{s,k}(1/n) \binom{n- |\bfS|}{k - t- i},
\end{align}
so $|\widetilde{\cC}_i| \ge (1 - O_{s,k}(1/n)) \binom{n - |\bfS|}{k - t - i}$. Since $\ff_i^*(D)$ is the same family for all $D \in \widetilde{\cC}_i$, set $\cT_i = \ff^*_i(D)$ for any $D \in \widetilde{\cC}_i$. Then, set $\cS_i = \cS_{i - 1} \sqcup \cT_i$. Note that by construction, $\cS_i$ does not contain a sunflower with $s$ petals and the core of size at most $t - 1$, so property~\ref{item: induction sunflower restriction} holds; moreover, $\cS_i \subset 2^{\bfS}$, so property~\ref{item: induction support restriction} follows. Since $\cC_i \subset \widetilde{\cD}_i$, we have $|\cT_i| = \phi_i$, so
\begin{align*}
    \widetilde{\phi}_i(\cS_i)& = |\cT_i| + \sum_{r = 0}^{i - 1} |\cS_i^{(t + j)}| \binom{T - |\bfS|}{i - r} \\
    & = \phi_i + \sum_{r = 0}^{i - 1} |\cS_{i - 1}^{(t + j)}| \binom{T - |\bfS|}{i - r} = \widetilde{\phi}_i(\cS_*)
\end{align*}
by~\eqref{eq: phi-i equality}. Since $\widetilde{\phi}_{j}(\cS_i) = \widetilde{\phi}_j(\cS_{i - 1}) = \widetilde{\phi}_j(\cS_*), j < i,$ by the lexicographic maximality of $(\widetilde{\phi}_j(\cS_{i - 1}))_{j = 0}^{i - 1}$ from property~\ref{item: induction max lexicographical order}, we derive that the vector
\begin{align*}
    (\widetilde{\phi}_0(\cS_{i}), \ldots, \widetilde{\phi}_i(\cS_i))
\end{align*}
is lexicographically maximal among families $\cS$ satisfying properties~\ref{item: induction sunflower restriction},\ref{item: induction support restriction}. Hence, property~\ref{item: induction max lexicographical order} holds for $\cS_i$.

\noindent \textbf{Step 8. Verifying property~\ref{item: fat restrictions over S}.} Next, we prove that $|\ff(T, \bfS)| \ge (1 - O_{s,k}(1/n)) \binom{n - |\bfS|}{k - t- i}$ for all $T \in \cT_i$. It is enough to bound $\ff^*_i(T)$ from below. Since $|\widetilde{\cD}_i| \ge (1 - O_{s,k}(1/n)) \binom{n - |\bfS|}{k - t- i}$ due to~\eqref{eq: large measure of tilde D i}  and  $|\widetilde{\cD}_i \setminus \widetilde{\cC}_i| \le O_{s,k}(1/n) \binom{n - |\bfS|}{k - t- i}$ due to~\eqref{eq: largest i-th component is fat}, we have
\begin{align*}
    |\ff(T, \bfS)| \ge |\ff_i^*(T)| \ge |\widetilde{\cC}_i| = (1 - O_{s,k}(1/n)) \binom{n - |\bfS|}{k - t -i},
\end{align*}
so $\cS_i$ possesses property~\ref{item: fat restrictions over S}. It remains to bound $|\cR_i|$, where $\cR_i$ is defined by~\eqref{eq: definition of R i}.

\noindent \textbf{Step 9. Bounding $|\cR_i|$.} We will use Proposition~\ref{proposition: bounds on same uniformity remainder}. Consider a set $F \in \cR_i$. We claim that for some $S_1, \ldots, S_{s - 1} \in \cS_{i}$, sets $F, S_1, \ldots, S_{s - 1}$ form a sunflower with $s$ petals and the core of size at most $t - 1$. Suppose that there are no such sets $S_1, \ldots, S_{s - 1}$. We have $F \cap \bfS \not \in \cS_{i}$ and $|F \cap \bfS| \le t + i$. If $|F \cap \bfS| \le t$, choose any $X \subset F \setminus \bfS$ of size $t - |F \cap \bfS|$.  Then, $X \sqcup (F \cap \bfS)$ can be added to $\cS^{(t)}_i$, contradicting the maximality of $\widetilde{\phi}_0(\cS_i)$. Hence, $|F \cap \bfS| > t$. Put $j = |F \cap \bfS| - t$, $j \le i$ and set $\cQ = \cS_i \sqcup \{F \cap \bfS\}$. Note that $\cQ$ satisfies properties~\ref{item: induction sunflower restriction},\ref{item: induction support restriction}. Then, $\widetilde{\phi}_k(\cQ) = \widetilde{\phi}_k(\cS_i)$ for $k < j$ and
\begin{align*}
    \widetilde{\phi}_j(\cQ) = |\cS_i^{(t + j)}| + 1 + \sum_{r = 0}^{j - 1} |\cS_i^{(t + r)}| \binom{T - |\bfS|}{j - r} > \widetilde{\phi}_j(\cS_i),
\end{align*}
contradicting the lexicographic maximality of $(\widetilde{\phi}_0(\cS_i), \ldots, \widetilde{\phi}_i(\cS_i))$. Hence, there are $S_1, \ldots, S_{s - 1} \in \cS_i$ such that $F, S_1, \ldots, S_{s - 1}$ form a sunflower with $s$ petals and the core of size at most $t - 1$. It implies $\cR_i \subset \widetilde{\cR}_i$, where $\widetilde{\cR}_i$ is defined in Proposition~\ref{proposition: bounds on same uniformity remainder}, so $|\cR_i| = O_{s,k} \left ( n^{- \frac{k - t (t - 1)}{t - 1}} \right ) \binom{n - t}{k - t}$.

Choosing $\cS = \cS_{q - t}$ and noting that $\cR = \cR_{q - t}$, $\widetilde{\cR} = \widetilde{\cR}_{q - t}$, we obtain the lemma.
\end{proof}

Equipped with this lemma, we are ready to prove Theorem~\ref{theorem: structural duke-erdos}.
\begin{proof}[Proof of Theorem~\ref{theorem: structural duke-erdos}]

Apply Lemma~\ref{lemma: induction over layers} with $q = 2t - 1$. Then we obtain the  families $\cS$ and $\ff_{\cS} = \{F \in \binom{[n]}{k} \mid F \cap \support \cS^{(t)} \in \cS\}$ such that
\begin{align*}
    |\ff \setminus \ff_{\cS}| = O_{s,k}(n^{-t}) \binom{n}{k - t}.
\end{align*}
Put $\cT = \cS^{(t)}$. By property~\ref{item: induction max lexicographical order} from Lemma~\ref{lemma: induction over layers}, $|\cT| = \phi(s, t)$. We claim that
\begin{align*}
    \ff_{\cT} = \left \{ F \in \binom{[n]}{k} \mid F \cap \support \cT \in \cT \right \}
\end{align*}
is a subfamily of $\ff$. Suppose that $\ff_{\cT}$ is not a subfamily of $\ff$. By the extremality of $\ff$, it means that $\ff_{\cT} \cup \ff$ contains a sunflower $F_1, \ldots, F_s$ with $s$ petals and the core of size $t - 1$. WLOG, we assume that for some $p \in [s - 1]$ we have $F_1, \ldots, F_p \in \ff \setminus \ff_{\cT}$ and $F_{p+1},\ldots, F_s\in \ff_{\mathcal T}$. Then, we consider two cases.

\textbf{Case 1. We have $\cap_i F_i \subset \support \cT$.} Put $\cG_i = \ff \left (F_i \cap \support \cT, \support \cT \cup \bigcup_{j = 1}^p F_j \right )$ for $i = p + 1, \ldots, s$. Then, we have
\begin{align*}
    |\cG_i| = |\ff(F_i \cap \support \cT,  \support \cT)| - pk \binom{n - |\support \cT \cup \cup_{j = 1}^p F_j|}{k - t - 1} = ( 1- O_{s,k}(1/n)) \binom{n}{k - t},
\end{align*}
where we used that $F_i \cap \support \cT \in \cT$ for $i = p + 1, \ldots, s$, and property~\ref{item: fat restrictions over S} from Lemma~\ref{lemma: induction over layers}. Due to Proposition~\ref{proposition: cross matching}, there exist disjoint sets $G_{p + 1} \in \cG_{p + 1}, \ldots, G_{s} \in \cG_s$. Hence, $\ff$ contains a sunflower $F_1, \ldots, F_p, G_{p + 1} \sqcup (F_{p + 1} \cap \support \cT), \ldots, G_s \sqcup (F_s \cap \support \cT)$ with $s$ petals and the core of size $t - 1$, a contradiction.

\textbf{Case 2. We have $\ell = |\cap_i F_i \setminus \support \cT| > 0$.} Fix $X = \cap_i F_i \setminus \support \cT$. We claim that for some $i = p + 1, \ldots, s$, we have
\begin{align}
\label{eq: small measure for some F_i cup X}
    |\ff((F_i \cap \support \cT) \cup X, \support \cT \cup X)| < sk \binom{n - |\support \cT| - \ell}{k - t - \ell - 1}.
\end{align}
Assume that the opposite holds. Set $\cG_i = \ff((F_i \cap \support \cT) \cup X, \support \cT \cup X \cup \bigcup_{j = 1}^p F_j) $. We have
\begin{align*}
    |\cG_i| & \ge |\ff((F_i \cap \support \cT) \cup X, \support \cT \cup X)| - pk \binom{n - |\support \cT| - \ell}{k -t - \ell - 1} \\
    & \ge (s - p) k \binom{n - |\support \cT \cup \bigcup_{j = 1}^p F_j|}{k - t- \ell - 1}.
\end{align*}
Proposition~\ref{proposition: cross matching} implies that there are disjoint sets $G_{p + 1} \in \cG_{p + 1}, \ldots, G_s \in \cG_{s}$. Hence, $\ff$ contains a sunflower $F_1, \ldots, F_p, (F_{p + 1} \cap \support \cT) \sqcup X \sqcup G_{p + 1}, \ldots, (F_s \cap \support \cT) \sqcup X \sqcup G_s$ with $s$ petals and the core of size $t - 1$, a contradiction.

Thus, for some $i = p + 1, \ldots, s$, we have~\eqref{eq: small measure for some F_i cup X}. We get the following inequality. (Recall that $|(F_i\cap \support \cT)\cup X|=t+\ell$.)
\begin{align*}
    |\ff| & \le |\ff_{\cS}| - |\ff_{\cS}((F_i \cap \support \cT) \cup X, \support \cT \cup X)| \\
    & \quad + sk \binom{n - |\support \cT| - \ell}{k - t - \ell - 1} + O_{s,k}(n^{-t}) \binom{n}{k - t} \\
    & = |\ff_\cS| - \binom{n - |\support \cT| - \ell}{k - t - \ell} + O_{s,k}(n^{- \ell - 1}) \binom{n}{k - t}.
\end{align*}
Since $|\ff| \ge |\ff_{\cS}|$, we have
\begin{align*}
    \binom{n - |\support \cT| - \ell}{k - t - \ell} = O_{s,k}(n^{-\ell - 1}) \binom{n}{k - t},
\end{align*}
a contradiction for large enough $n$.

Hence, $\ff_{\cT} \cup \ff$ does not contain a sunflower with $s$ petals and the core of size $t - 1$, so $\ff_{\cT} \subset \ff$ by the extremality of $\ff$.

It remains to prove that for each $F \in \ff \setminus \ff_{\cT}$ we have $|F \cap \support \cT| \ge t + 1$. Suppose that the opposite holds, i.e., for some $F \in \ff \setminus \ff_{\cT}$ we have $|F \cap \support \cT| \le t$. We claim that there exist sets $T_1, \ldots, T_{s - 1} \in \cT$ such that $F \cap \support \cT, T_1, \ldots, T_{s - 1}$ form a sunflower with $s$ petals and the core of size at most $t - 1$. Indeed, otherwise, one can choose a set $X$ of size $t$ such that $F \cap \support \cT \subset X \subset F$, and add it to $\cT$, enlarging it. It contradicts $|\cT| = \phi(s, t)$ and the definition of $\phi(s, t)$. Next, fix any $Y \subset F \setminus \support \cT$ of size $t - 1 - |F \cap \bigcap_i T_i|$. We have
\begin{align*}
    |\ff_{\cT}(T_i \cup Y, \support \cT \cup F)| = \binom{n - |\support \cT \cup F|}{k - |T_i| - |Y|} \ge (s - 1)k \binom{n - |\support \cT \cup F|}{k - |T_i| - |Y| - 1},
\end{align*}
so there are disjoint sets $F_1 \in \ff_{\cT}(T_1 \cup Y, \support \cT \cup F), \ldots, F_{s - 1} \in \ff_{\cT}(T_{s - 1} \cup Y, \support \cT \cup F)$ due to Proposition~\ref{proposition: cross matching}. Hence, $\ff$ contains a sunflower $F, F_1 \sqcup Y \sqcup T_1, \ldots, F, F_{s - 1} \sqcup Y \sqcup T_{s - 1}$ with $s$ petals and the core of size $t - 1$, a contradiction.
\end{proof}

\subsection{Proof of Proposition~\ref{proposition: bounds on same uniformity remainder}}
\label{subsection: proof of kruskal-katona type bound for remainder}

Using notation of Lemma~\ref{lemma: induction over layers}, we set $\bfS = \support \cS^{(t)}_i$. Decomposing
\begin{align*}
    |\widetilde{\cR}_i| = \sum_{X \subset \bfS} |\widetilde{\cR}_i(X, \bfS)|,
\end{align*}
we conclude that for some $X \subset \bfS$, we have
\begin{align}
\label{eq: one restriction upper bound}
    |\widetilde{\cR}_i| \le 2^{|\bfS|} |\widetilde{\cR}_i(X, \bfS)|. 
\end{align}

Consider a set $F \in \widetilde{\cR}_i(X, \bfS)$. By the definition, there exist sets $T_1, \ldots, T_{s - 1} \in \cS_i$ such that $F \sqcup X, T_1, \ldots, T_{s - 1}$ form a sunflower with $s$ petals and the core of size at most $t - 1$. Then, $X = (F \sqcup X) \cap \bfS, T_1, \ldots, T_{s - 1}$ form a sunflower with $s$ petals and the core of size at most $t - 1$. Denote  its core $X \cap \bigcap_{j = 1}^{s - 1} T_j$ by $C$ and define $\ell = t - 1- |C|$. Consider two cases.

    \textbf{Case 1. We have $\mathbf{\ell = 0}$.} We claim that $\widetilde{\cR}_i(X, \bfS) = \varnothing$ in this case. Let $F'$ be any set in $\widetilde{\cR}_i(X, \bfS)$ and fix $F = X \cup F'$. For brevity, we denote $\cG_j = \ff(T_j, \bfS)$ for $j \in [s - 1]$. Since $\cS_i$ satisfies property~\ref{item: fat restrictions over S} of Lemma~\ref{lemma: induction over layers}, we have
    \begin{align*}
        |\cG_j| \ge (1 - O_{s,k}(1/n)) \binom{n - |\bfS|}{k - |T_j|}.
    \end{align*}
    For each $j \in [s - 1]$, it implies
    \begin{align*}
        |\cG_j(\varnothing, F')| \ge |\cG_j| - |F'| \binom{n - |\bfS|}{k - |T_j| - 1} = (1 - O_{s,k}(1/n)) \binom{n - |\bfS| - |F'|}{k - |T_j|}.
    \end{align*}
    Then, Proposition~\ref{proposition: cross matching} implies that there exist disjoint sets $F_1 \in \cG_1(\varnothing, F'), \ldots, F_{s - 1} \in \cG_{s - 1}(\varnothing, F')$, so $\ff$ contains a sunflower $F, F_1 \sqcup T_1, \ldots, F_{s - 1} \sqcup T_{s - 1}$ with $s$ petals and the core of size $t - 1$, a contradiction.

    \textbf{Case 2. We have $\mathbf{\ell > 0}$.} For each $j \in [s - 1]$, define
    \begin{align}
    \label{eq: fat shadows}
        \hh_j = \left \{H \in \partial_{\ell}\cG_j \mid |\cG_j(H)| \ge sk \binom{n -  |\bfS| - \ell}{k - |T_j| - \ell - 1}  \right \}.
    \end{align}
    By double counting, we have
    \begin{align*}
        \binom{k - |T_j|}{\ell} |\cG_j| & = \sum_{H \in \partial_{\ell} \cG_j} |\cG_j(H)| \le |\hh_j| \cdot \binom{n - |\bfS| - \ell}{k - |T_j| - \ell} \\
        & \quad + sk \cdot (|\partial_\ell \cG_j| - |\hh_j|) \binom{n - |\bfS| -  \ell}{k - |T_j| - \ell - 1}.
    \end{align*}
    Since $|\cG_j| = (1 - O_{s,k}(1/n)) \binom{n - |\bfS|}{k - |T_j|}$ and $|\partial_\ell \cG_j| \le \binom{n - |\bfS|}{\ell}$, we obtain
    \begin{align*}
        |\hh_j| & \ge \left (1 - O_{s,k}(1/n) \right ) \frac{\binom{k - |T_j|}{\ell} \binom{n - |\bfS|}{k - |T_j|}}{\binom{n - |\bfS| - \ell}{k - |T_j|- \ell}} \nonumber \\
        & = \left (1 - O_{s,k}(1/n) \right ) \binom{n - |\bfS|}{\ell}.
    \end{align*}
    Since $\hh_j \subset \binom{[n]\setminus \bfS}{\ell}$, by the union bound, we have
    \begin{align}
    \label{eq: fat intersetion of shadows}
        \Big|\bigcap_{j \in [s - 1]} \hh_j\Big| = \left (1 - O_{s,k}(1/n) \right ) \binom{n - |\bfS|}{\ell}.
    \end{align}
    We claim that $\partial_\ell (\widetilde{\cR}_i(X, \bfS))$ and $\cap_j \hh_j$ do not intersect. Suppose that there exists $H \in \partial_\ell (\widetilde{\cR}_i(X, \bfS)) \cap \bigcap_i \hh_i$. Fix a set $F \in \widetilde{\cR}_i(X \cup H, \bfS \cup H)$. Then, for each $j \in [s - 1]$ by the definition of $\hh_j$, we have
    \begin{align*}
        |\cG_j(H, H \sqcup F)| & \ge sk \binom{n - |\bfS| - \ell}{k - |T_j| - \ell - 1} - |F| \binom{n - |\bfS| - \ell}{k - |T_j| - \ell - 1} \\
        & \ge (s - 1) k \binom{n - |\bfS| - \ell}{k - |T_j| - \ell - 1}.
    \end{align*}
    By Proposition~\ref{proposition: cross matching}, there are disjoint sets $F_1 \in \cG_1, \ldots, F_{s - 1} \in \cG_{s - 1}$. Hence, $F$ contains a sunflower $X \sqcup H \sqcup F, T_1 \sqcup H \sqcup F_1, \ldots, T_{s - 1} \sqcup H \sqcup F_{s - 1}$ with $s$ petals and the core of size $t - 1$, a contradiction. Hence, $\partial_\ell (\widetilde{\cR}_i(X, \bfS))$ and $\cap_j H_j$ are disjoint, and thus
    \begin{align*}
        |\partial_\ell (\widetilde{\cR}_i(X, \bfS))| = O_{s,k}(1/n) \binom{n - |\bfS|}{\ell}
    \end{align*}
    due to~\eqref{eq: fat intersetion of shadows}. Let $y$ be a real number such that $|\partial_\ell (\widetilde{\cR}_i(X, \bfS))| = \binom{y}{\ell}$. Then, $y = O_{s,k}(n^{1 - 1/\ell})$, and, using the Kruskal--Katona Theorem~\ref{theorem: kruskal--katona theorem}, we get
    \begin{align*}
        |\widetilde{\cR}_i(X, \bfS)| \le \binom{y}{k - |X|} = O_{s,k}(n^{k - |X| - (k - |X|) / \ell}) = O_{s,k}\left ( n^{t - |X| - (k - |X|)/\ell}\right ) \binom{n}{k - t}.
    \end{align*}
     Since $|X| - |X| / \ell \ge 0$, we have
    \begin{align*}
        |\widetilde{\cR}_i(X, \bfS)| = O_{s,k}\left (n^{-\frac{k - t\ell}{\ell}} \right ) \binom{n}{k - t} = O_{s,k} \left ( n^{-\frac{k - t (t - 1)}{t - 1}} \right ) \binom{n}{k - t},
    \end{align*}
    and the proposition follows.

\textbf{Remark. } The bound in this proposition is, in some sense, best possible. We can construct two families $\mathcal X,\mathcal Y$ of $k$-element subsets of $[n],$ such that $\mathcal X$ has size $(1-O(1/n)){n\choose k}$, $\mathcal Y$ has size $\Omega(n^{-k/\ell}){n\choose k}$, and such that their $\ell$-shadows are disjoint. (We simplified the parameters for this claim, but this lower bound matches the upper bound on $|\widetilde{\cR}_i(X, \bfS)|$ if one plugs in the correct uniformities.) The example is as follows: let $\mathcal X$ be the family of all $k$-set that intersect $[m]$ in at most $\ell-1$ elements, where $m = n^{1/\ell}$, and let $\mathcal Y$ be the family ${[m]\choose k}.$ It is easy to see that $\ell$-shadows of these two families are disjoint and the sizes are as claimed.
\subsection{Proof of Proposition~\ref{proposition: k-t-i fatness}}
\label{subsection: proof of k-t-i fatness}
    Consider some $S \in \cS_{i - 1}$. By the double counting, we have
    \begin{align*}
        \sum_{D \in \binom{n - |\bfS|}{k - t - i}} |\ff(S \sqcup D, \bfS \sqcup D)| = \binom{k - |S|}{k - t- i} |\ff(S, \bfS)| = (1 - O_{s,k}(1/n))\binom{k - |S|}{k - t- i} \binom{n - |\bfS|}{k - |S|}
    \end{align*}
    since $\cS_{i - 1}$ satisfies property~\ref{item: fat restrictions over S}. Define
    \begin{align*}
        \widehat{\cE}_{i - 1, p, S} = \left \{ X \in \binom{n - |\bfS|}{p} \mid |\ff(S \sqcup X, \bfS \sqcup X)| < sk \binom{n - |\bfS| - p}{k - |S| - p - 1}\right \}.
    \end{align*}
    If $D \not \in \cD_{i - 1}$, then there exist $p\in [t-1]$, $S\in \mathcal S_{i-1}$ and a set $E \in \widehat{\cE}_{i - 1, p, S}$ such that $E \subset D$. For each $E$ there are  $\binom{n - |\bfS| - p}{k - t - i - p}$ sets of $\binom{[n] \setminus \bfS}{k - t- i}$ that contain $E$. Hence, we have
    \begin{align}
        \label{eq: bound via hat caligraphic E}
        \left | \binom{[n] \setminus \bfS}{k - t - i}  \setminus \cD_{i - 1}\right | \le \sum_{S \in \cS_{i - 1}} \sum_{p = 1}^{t - 1} |\widehat{\cE}_{i - 1, p, S}| \binom{n - |\bfS| - p}{k - t - i - p}.
    \end{align}
    So, it remains to bound $\widehat{\cE}_{i - 1, p, S}$. For each $S \in \cS_{i - 1}$ and $p$, we have
    \begin{align*}
        \binom{k - |S|}{p} |\ff(S, \bfS)| & = \sum_{X \in \binom{[n] \setminus \bfS}{p}} |\ff(S \sqcup X, \bfS \sqcup X)| \\
        &  \le sk \binom{n - |\bfS| - p}{k - |S| - p - 1} |\widehat{\cE}_{i - 1, p , S}| \\
        & \quad + \binom{n - |\bfS| - p}{k - |S| - p} \left ( \binom{n - |\bfS|}{p} - |\widehat{\cE}_{i - 1, p, S}| \right ).
    \end{align*}
    Since $\cS_{i - 1}$ satisfies property~\ref{item: fat restrictions over S} of Lemma~\ref{lemma: induction over layers}, we have $|\ff(S, \bfS)| \ge (1 - O_{s,k}(1/n)) \binom{n - |\bfS|}{k - |S|}$. Using
    \begin{align*}
        \binom{n - |\bfS|}{k - |\bfS|} \binom{k - |S|}{p} = \binom{n - |\bfS|}{p} \binom{n - |\bfS| - p}{k - |S| - p}, 
    \end{align*}
    we get
    \begin{align*}
        |\widehat{\cE}_{i - 1, p, S}| (1 - O_{s,k}(1/n)) \binom{n - |\bfS| - p}{k - |S| - p} & = O_{s,k}(1/n) \binom{n - |S|}{p}\binom{n - |\bfS|}{k - |S| - p}, \\
        |\widehat{\cE}_{i - 1, p, S}| & = O_{s,k}(1/n) \binom{n - |\bfS|}{p}.
    \end{align*}
    Substituting the above into~\eqref{eq: bound via hat caligraphic E}, we get
    \begin{align*}
        \left | \binom{[n] \setminus \bfS}{k - t - i}  \setminus \cD_{i - 1}\right | & \le |\cS_{i - 1}| \cdot (t - 1) \cdot O_{s,k}(1/n) \binom{n - |\bfS|}{k - t- i} \\
        & = O_{s,k}(1/n)\binom{n - |\bfS|}{k - t- i},
    \end{align*}
    where we used $|\cS_{i - 1}| \le 2^{|\bfS|} \le 2^{t \phi(s, t)} = O_{s,k}(1)$. Thus, the proposition follows.

\subsection{Proof of Proposition~\ref{proposition: cross matching}}
\label{subsection: proof of cross-matching}

Let $p$ be the maximal integer such that there exist disjoint $F_1 \in \cA_1, \ldots, F_p \in \cA_p$. Assume that $p < s$. Then, we have
\begin{align*}
    |\cA_{p + 1}(\varnothing, \cup_{j = 1}^p F_j)| \ge \left (\sum_{j = 1}^s k_j \right ) \binom{m}{k_{p + 1} - 1} - \left ( \sum_{j = 1}^p |F_j| \right ) \binom{m}{k_{p + 1} - 1} > 0,
\end{align*}
so there exists $F_{p + 1} \in \cA_{p + 1}(\varnothing, \cup_{j = 1}^p F_j)$. Clearly, $F_1, \ldots, F_{p + 1}$ are disjoint, a contradiction. 

\section{Proof of Theorem~\ref{theorem: graph case Duke-Erdos extremal}}
\label{section: proof of theorem -- graph case}

\begin{proof}[Proof of Theorem~\ref{theorem: graph case Duke-Erdos extremal}]
 Let us apply Lemma~\ref{lemma: induction over layers} with $q = 3$, and obtain families $\cS \subset  \bigcup_{i = 0}^1 \binom{\support \cT}{2 + i},$ $\cR, \widetilde{\cR}$ such that
 \begin{align}
 \label{eq: F approximation in graph case}
     |\ff \setminus \ff_{\cS}| = O_{s,k}(n^{-2}) \binom{n}{k - 2}, \quad |\widetilde{\cR}| = O_{s,k} \left ( n^{-2} \right ) \binom{n}{k - 2}.
 \end{align}
 Due to Theorem~\ref{theorem: sunflower graph case}, $\cS^{(2)}$ can be represented as a disjoint union of two cliques $K_1 \sqcup K_2$, each of size $s$. 
 
 Suppose that there exists a set $F \in \ff$ such that $|F \cap K_i| = 1$ for some $i \in 1,2$. Fix these $F, i$. Let $v \in K_i$ be a vertex such that $\{v\} = F \cap K_i$. Consider edges $\{v, u\} \in E(G)$ for all $u \in V(K_i) \setminus \{v\}$. Then, $\{v\}$ and $\{v, u\},$ where $u \in V(K_i) \setminus \{v\}$, form a sunflower with $s$ petals and the core of size $1$. Since $F \cap K_i = \{v\}$, the sets $F \cap V(G)$ and $ \{v, u\},$ where $ u \in V(K_i) \setminus \{v\}$, form a sunflower with the same core. We claim that it implies for some $u \in V(K_i) \setminus \{v\}$
 \begin{align}
 \label{eq: small cardinality for some u}
     |\ff(\{v, u\}, \support \cS^{(t)} \cup F)| < sk \binom{n - |\support \cS^{(2)} \cup F|}{k - 3}.
 \end{align}
 Otherwise, Proposition~\ref{proposition: cross matching} implies that there are disjoint sets $F_u \in \ff(\{v, u\}, \support \cS^{(t)} \cup F)$ such that $F, F_u, u \in V(K_i) \setminus \{v\}$ form a sunflower with $s$ petals and the core of size $1$. But~\eqref{eq: small cardinality for some u} contradicts property~\ref{item: fat restrictions over S} of $\cS$ stated in Lemma~\ref{lemma: induction over layers}. Hence, there is no $F \in \ff$ such that $|F \cap K_i| = 1$ for some $i$.

 Next, we prove that each set $F \in \ff$, $F$ intersects $\support \cT$. Suppose that $F \cap (\support \cS^{(2)}) = \varnothing$. Let $e_1, \ldots, e_{s - 1}$ be any matching of $\cS^{(2)}$. Let $x \in F$ be any element of $F$. We claim that for some $e_j$, we have
 \begin{align*}
     |\ff(e_j \sqcup \{x\}, \support \cS^{(2)} \sqcup F)| < sk \binom{n - |\support \cS^{(2)}| - k}{k - 4}.
 \end{align*}
 Indeed, otherwise there are disjoint sets $F_i \in\ff(e_j \sqcup \{x\}, \support \cS^{(2)} \sqcup F)$ due to Proposition~\ref{proposition: cross matching}, and so $\ff$ contains a sunflower $F, e_1 \sqcup \{x\} \sqcup F_1, \ldots, e_{s - 1} \sqcup \{x\} \sqcup F_{s - 1}$ with $s$ petals and the core $\{x\}$ of size $1$. Due to~\eqref{eq: F approximation in graph case}, it implies
 \begin{align*}
     |\ff| & \le |\ff \setminus \ff_{\cS}| + |\ff_{\cS}| - |\ff_{\cS}(e_j \sqcup \{x\}, \support \cS^{(2)} \sqcup F)| + sk \binom{n - |\support \cS^{(2)}| - k}{k - 4} \\
     & = |\ff_{\cS}|  - \binom{n - |\support \cS^{(2)} \sqcup F|}{k - 3} + O_{s,k}(n^{-2}) \binom{n}{k - 2}.
 \end{align*}
 Since $\ff_{\cS}$ does not contain a sunflower with $s$ petals and the core of size at most $t - 1$, we have $|\ff| \ge |\ff_{\cS}|$, so
 \begin{align*}
     \binom{n - |\support \cS^{(2)} \sqcup F|}{k - 3} = O_{s,k}(n^{-2}) \binom{n}{k - 2}
 \end{align*}
 a contradiction for large enough $n$. Hence, each $F \in \ff$ intersects $\support \cS^{(2)}$, and $|F \cap K_i| \neq 1$ for each $i$. Considering $\cS^{(2)}$ as a graph $G = (\support \cS^{(2)}, \cS^{(2)})$, we get
 \begin{align*}
     \ff \subset \cG_* = \bigg \{F \in \binom{[n]}{k} \mid &  |F \cap V(G)| \ge 2 \\
    & \text{ and } \forall i \in \{1,2\} \text{ we have } |F \cap V(K_i)| \neq 1 \bigg \}.
 \end{align*}
 It remains to check that $\cG_*$ does not contain a sunflower with $s$ petals and the core of size $1$. Suppose that some $G_1, \ldots, G_s \in \cG_*$ form such sunflower. Then, $G_1 \cap V(G), \ldots, G_{s} \cap V(G)$ form a sunflower with the core of size at most $1$. By construction, for each $G_i \cap V(G)$ there is $e_i \in E(G)$ such that $e_i \subset G_i$. If $G_1 \cap V(G), \ldots, G_s \cap V(G)$ are disjoint, then there are disjoint $e_1, \ldots, e_s \in E(G)$. However, $E(G) = \cS^{(2)}$ does not contain a matching of size $s$. Hence, $G_1 \cap V(G), \ldots, G_s \cap V(G)$ form a sunflower with the core of size $1$. Let $\{v\} = V(G) \cap (\cap_i G_i)$. WLOG, assume that $v$ belongs to the first clique $K_1$. Then, $|G_i \cap K_1| \ge 2$ for each $i = 1, \ldots, s$ by the definition of $\cG_*$. Choose $u_i \in G_i \setminus \{v\}$ for each $i = 1, \ldots, s$. Since $G_1 \cap V(G), \ldots, G_s \cap V(G)$ form a sunflower with the core $\{v\}$, all $u_i$ must be distinct. But it means that $K_1$ contains $s + 1$ vertices, a contradiction.

 Hence, $\cG_*$ does not contain a sunflower with $s$ petals and the core of size $1$, so $\ff = \cG_*$ by the extremality of $\ff$.
\end{proof}

\section{Proof of Theorem~\ref{theorem: exact erdos-duke}}
\label{section: proof of the exact duke-erdos}

\begin{proof}[Proof of Theorem~\ref{theorem: exact erdos-duke}]
Apply Lemma~\ref{lemma: induction over layers} with $q = T$, and obtain a family $\cS$ possessing properties~\ref{item: induction sunflower restriction}-\ref{item: remainder bound} defined in this lemma. Since $q \ge T$, $\cS$ maximizes the vector-valued function $\widetilde{\phi}(\cS)$ in the lexicographical order due to property~\ref{item: induction max lexicographical order} from Lemma~\ref{lemma: induction over layers}, so $\cS$ admits properties~\ref{item: sunflower freeness S*}-\ref{item: lexcicographic maximality S*}. Thus, there exists a family $\cS_*$ (which is equal to $\cS$ obtained by Lemma~\ref{lemma: induction over layers}), such that 
\begin{align*}
    |\ff \setminus \ff_*| & = |\{F \in \ff \mid |F \cap \support \cS^{(t)}| \le T \text{ and } F \cap \support \cS^{(t)} \not \in \cS \}| =  |\cR| \\
    & = O_{s,k} \left ( n^{- \frac{k - t(t - 1)}{t - 1}} \right ) \binom{n}{k - t},
\end{align*}
where $\ff_*$ is defined by~\eqref{eq: extremal example refined} and $\cR, \widetilde{\cR}$ are defined in Lemma~\ref{lemma: induction over layers}. The first equality above is valid since the largest possible intersection with $\support S^{(t)}$ is at most $T$, and thus the sets fall in the remainder only if they belong to $\mathcal R.$ We claim that $\ff \subset \ff_*$. Suppose that there exists a set $F \in \ff \setminus \ff_*$. Then, $F \in \cR$, so $F \in \widetilde{\cR}$. Hence, there are $S_1, \ldots, S_{s - 1} \in \cS_*$ such that $F \cap \support \cS^{(t)}, S_1, \ldots, S_{s - 1}$ form a sunflower with $s$ petals and the core of size at most $t - 1$. Let $Y$ be any subset of $F \setminus \support \cS^{(t)}$ of size $t - 1 - |F \cap \bigcap_{i = 1}^{s - 1} S_i|$. We claim that for some $i = 1, \ldots, s- 1$, we have
\begin{align}
\label{eq: full erdos duke -- small S}
    |\ff(S_i \sqcup Y, \support \cS^{(t)} \cup F)| < (s - 1) k \binom{n - |(\support \cS^{(t)}) \cup F|}{k - |S_i| - |Y| - 1}.
\end{align}
Indeed, otherwise, we can apply Proposition~\ref{proposition: cross matching} to families $\cG_i = \ff(S_i \sqcup Y, \support \cT \cup F)$, $i = 1, \ldots, s- 1$ and obtain disjoint sets $F_1 \in \cG_1, \ldots, F_{s - 1} \in \cG_{s - 1}$. It implies that $\ff$ contains a sunflower $F, S_1 \sqcup Y \sqcup S_1, \ldots, S_{s - 1} \sqcup Y \sqcup F_{s - 1}$ with $s$ petals and the core of size $t - 1$, a contradiction.

Hence, we have~\eqref{eq: full erdos duke -- small S} for some $S_i$. Therefore, we have
\begin{align*}
    |\ff| & \le |\ff_*| - \binom{n - |(\support \cS^{(t)}) \cup F|}{k - |S_i| - |Y|} + (s - 1)k \binom{n - |(\support \cS^{(t)}) \cup F|}{k - |S_i| - |Y| - 1} + \\
    & \quad + O_{s,k} \left ( n^{- \frac{k - t(t - 1)}{t - 1}} \right ) \binom{n}{k - t}.
\end{align*}
On the other hand, we have $|\ff| \ge |\ff_*|$, so it implies
\begin{align*}
    O_{s,k} \left ( n^{- \frac{k - t(t - 1)}{t - 1}} \right ) \binom{n}{k - t} + (s - 1) k \binom{n - |(\support \cS^{(t)}) \cup F|}{k - |S_i| - |Y|  - 1} \ge \binom{n - |(\support \cS^{(t)}) \cup F|}{k - |S_i| - |Y|}.
\end{align*}
The left-hand side is $O_{s,k}\big(n^{k-t-\frac{k - t(t - 1)}{t - 1}}\big) = O_{s,k}\big(n^{k-\frac{k}{t - 1}}\big)=O\big(n^{k-T-t}\big)$, since $k \ge (T + t)(t - 1)$. The right-hand side is $\Omega_{k,s}\big(n^{k-|S_i|-|Y|}\big) = \Omega_{k,s}\big(n^{k-T-t+1}\big)$. This is a contradiction for large enough $n$, and so $\ff = \ff_*$ by the extremality of $\ff$.
\end{proof}

\printbibliography

\end{document}